\def \N {\mathbb N}
\def \C {\mathbb C}
\def \D {\mathbb D}
\def \T {\mathbb T}
\def \P {\mathcal P}
\newcommand\abs[1]{\left|#1\right|}
\newcommand{\norm}[1]{\Vert#1\Vert}
\newcommand{\bignorm}[1]{\bigl\Vert#1\bigr\Vert}
\newcommand{\Bignorm}[1]{\Bigl\Vert#1\Bigr\Vert}
\newcommand{\biggnorm}[1]{\biggl\Vert#1\biggl\Vert}
\newcommand{\HI}{H^\infty}
\theoremstyle{plain}
\newtheorem{theorem}{Theorem}[section]
\newtheorem{proposition}[theorem]{Proposition}
\newtheorem{corollary}[theorem]{Corollary}
\newtheorem{definition}[theorem]{Definition}
\newtheorem{lemma}[theorem]{Lemma}
\theoremstyle{definition}
\newtheorem{remark}[theorem]{Remark}
\begin{document}

\title[Operators with finite peripheral spectrum]
{Polygonal functional calculus for operators with finite peripheral spectrum}

\author[O. Bouabdillah]{Oualid Bouabdillah}
\email{oualid.bouabdillah@univ-fcomte.fr,}
\address{Laboratoire de Math\'ematiques de Besan\c con, UMR 6623, 
CNRS, Universit\'e Bourgogne Franche-Comt\'e,
25030 Besan\c{c}on Cedex, FRANCE}

\author[C. Le Merdy]{Christian Le Merdy}
\email{clemerdy@univ-fcomte.fr}
\address{Laboratoire de Math\'ematiques de Besan\c con, UMR 6623, 
CNRS, Universit\'e Bourgogne Franche-Comt\'e,
25030 Besan\c{c}on Cedex, FRANCE}

\date{\today}

\maketitle

\begin{abstract}
Let $T\colon X\to X$ be a bounded operator on Banach 
space, whose spectrum
$\sigma(T)$ is included in the closed unit disc $\overline{\D}$.
Assume that the peripheral spectrum $\sigma(T)\cap\T$ is finite
and that $T$ satisfies a
resolvent estimate
$$
\norm{(z-T)^{-1}}\lesssim \max\bigl\{
\vert z -\xi\vert^{-1}\, :\,\xi\in \sigma(T)\cap\T\bigr\},
\qquad z\in\overline{\D}^c.
$$
We prove that $T$ admits a bounded polygonal
functional calculus, that is, an estimate
$\norm{\phi(T)}\lesssim \sup\{\vert\phi(z)\vert\, :\, z\in\Delta\}$ for 
some polygon $\Delta\subset\D$ and all polynomials $\phi$, in 
each of the following two cases : 
(i) either $X=L^p$ for some 
$1<p<\infty$, and $T\colon
L^p\to L^p$ is a positive contraction;
(ii) or $T$ is polynomially bounded
and for all $\xi\in \sigma(T)\cap\T,$ there 
exists a neighborhood $\mathcal V$ of
$\xi$ such that the set
$\{(\xi-z)(z-T)^{-1}\, :\,
z\in{\mathcal V}\cap \overline{\D}^c\}$
is $R$-bounded (here $X$ is arbitrary).
Each of these two results extends 
a theorem of de Laubenfels concerning
polygonal
functional calculus on Hilbert space. 
Our investigations require the introduction,
for any finite set $E\subset\T$, of a  notion of
Ritt$_E$ operator which generalises the classical
notion of Ritt operator. We study these Ritt$_E$ operators
and their natural functional calculus.
\end{abstract}

\vskip 0.8cm
\noindent
{\it 2000 Mathematics Subject Classification:} Primary 47A60, secondary	47B12, 47B01.

\smallskip
\noindent
{\it Key words:} Functional calculus, Sectorial operators, Ritt operators, $R$-boundedness.


\bigskip
\section{Introduction} Let $X$ be a Banach space, 
let $T\colon X\to X$ be a bounded 
operator and let $\Omega\subset\C$ be an 
open set whose closure contains $\sigma(T)$, the spectrum of $T$.
In various situations, an important issue is to determine
whether there exists a constant 
$K\geq 1$ such that:
\begin{equation}\label{VN}
\text{For all polynomial}\ \phi,\quad 
\norm{\phi(T)}\leq K\sup\bigl\{\vert\phi(z)\vert\, :\, z\in\Omega\bigr\}.
\end{equation}
The search for such functional calculus estimates 
stemmed from the famous von Neumann inequality which asserts
that if $X=H$ is a Hilbert space and $\norm{T}\leq 1$, then
(\ref{VN}) holds true with $\Omega=\D$, the unit disc of $\C$,
and $K=1$. In Hilbertian operator theory, several important topics 
are related to von Neumann's inequality and 
to the search for inequalities of the form (\ref{VN}). This 
includes the study of polynomial boundedness, $K$-spectral
sets and similarity problems, for which we refer to 
\cite{Ba1,BBC,CG,DEY,Pau,Pis} and the references therein.

We recall that $T\colon X\to X$ is called polynomially bounded if
there exists a constant 
$K\geq 1$ such that (\ref{VN}) holds true with $\Omega=\D$.
In this paper we are interested in the case when 
the open set $\Omega\subset\C$ in (\ref{VN}) is a polygon. 
More explicitly, we say that $T\colon X\to X$ admits a bounded polygonal 
functional calculus if there exist a (convex, open) polygon
$\Delta\subset\D$ such that 
$\sigma(T)\subset \overline{\Delta}$, 
and a constant $K\geq 1$ such that 
(\ref{VN}) holds true with $\Omega=\Delta$.

Let $\T=\{z\in\C\, :\, \vert z\vert=1\}$. For any
$T$ such that $\sigma(T)\subset\overline{\D}$,
call 
$\sigma(T)\cap\T$ the peripheral spectrum of $T$.
It is easy to check (see Remark \ref{PolygonToRitt})
that if $T\colon X\to X$ admits a bounded polygonal 
functional calculus, then $\sigma(T)\cap\T$ is finite and one has
an estimate
\begin{equation}\label{ResEst}
\norm{(z-T)^{-1}}\lesssim \max\bigl\{
\vert z -\xi\vert^{-1}\, :\,\xi\in \sigma(T)\cap\T\bigr\},
\qquad z\in\overline{\D}^c.
\end{equation}
In the Hilbertian case,
the following converse was established by de Laubenfels 
\cite[Theorem 4.4, $(a)\Rightarrow (b)$]{dL}.

\begin{theorem}\label{TH0}
Let $H$ be a Hilbert space and let $T\colon H\to H$ be a
polynomially bounded operator. Assume that $T$ has a finite
peripheral spectrum and that (\ref{ResEst}) holds true.
Then $T$ admits a bounded polygonal 
functional calculus.
\end{theorem}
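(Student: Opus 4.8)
The plan is to view $T$ as a polynomially bounded ``Ritt$_E$'' operator, with $E=\sigma(T)\cap\T=\{\xi_1,\dots,\xi_n\}$, and to reduce the desired estimate, vertex by vertex, to a sectorial functional calculus, then to glue. The first step is to promote (\ref{ResEst}) to a resolvent estimate on a larger set. Since $T$ is power bounded (being polynomially bounded), the Neumann series gives $\norm{(z-T)^{-1}}\lesssim(\abs z-1)^{-1}$ on $\overline{\D}^c$; combining this with (\ref{ResEst}) by the classical Ritt-operator argument, carried out near each $\xi_j$ separately, one obtains suitable Stolz-type cones $S_1,\dots,S_n$ with vertices $\xi_1,\dots,\xi_n$ and an estimate $\norm{(z-T)^{-1}}\lesssim\max_j\abs{z-\xi_j}^{-1}$ for all $z\notin\bigcup_j\overline{S_j}$. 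In particular $\sigma(T)$ lies in $\bigl(\bigcup_jS_j\bigr)\cup K$ with $K$ a compact subset of $\D$, so one can fix a convex polygon $\Delta\subset\D$ whose vertices are exactly $\xi_1,\dots,\xi_n$ (with interior angle $<\pi$ there, wide enough to contain $S_j$ but narrow enough that the two edges at $\xi_j$ still lie in the resolvent-controlled region), with $\sigma(T)\subset\overline{\Delta}$ and $\overline{\Delta}\cap\T=E$; this $\Delta$ will be the polygon in the conclusion.

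Next I would separate the behaviour of $\phi$ on $E$ from the rest. Let $p$ be the Lagrange interpolation polynomial with $p(\xi_j)=\phi(\xi_j)$ for $j=1,\dots,n$; then $\deg p<n$ and, since $\xi_j\in\overline{\Delta}$, $\norm{p}_{\infty,\D}\lesssim\max_j\abs{\phi(\xi_j)}\le\norm{\phi}_{\infty,\Delta}$ with a constant depending only on $E$, so polynomial boundedness gives $\norm{p(T)}\lesssim\norm{\phi}_{\infty,\Delta}$. Writing $\phi=p+\psi$, it remains to bound $\norm{\psi(T)}$, where $\psi$ is a polynomial vanishing on $E$ and $\norm{\psi}_{\infty,\overline{\Delta}}\lesssim\norm{\phi}_{\infty,\Delta}$. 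One may further split $\psi=\sum_jr_j\psi$ along the Lagrange basis polynomials $r_j$, so that $r_j\psi$ is, in the functional-calculus sense, concentrated near $\xi_j$ and vanishes at every $\xi_k$; this makes the gluing transparent, the off-diagonal interactions being harmless because the resolvent is uniformly bounded away from $E$.

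The crux is the estimate for $\psi$. Formally $\psi(T)=\frac1{2\pi i}\int_\Gamma\psi(z)(z-T)^{-1}\,\dx z$ over a contour $\Gamma$ that hugs $\partial\Delta$, so that $\abs{\psi(z)}\le\norm{\psi}_{\infty,\overline{\Delta}}$ on it, while staying in the region where $\norm{(z-T)^{-1}}\lesssim\abs{z-\xi_j}^{-1}$ near each vertex. Because $\psi$ vanishes at the $\xi_j$ the integral converges, but it vanishes there only to first order, with an a priori uncontrolled derivative, so the naive $L^1$--$L^\infty$ bound loses a factor depending on $\psi$ (a Markov-type loss in $\deg\psi$), which one cannot afford. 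Here the Hilbert space structure must enter, through McIntosh-type quadratic estimates: after translating $\xi_j$ to $1$, the local piece of $T$ at $\xi_j$ is governed by the operator $A_j=\xi_jI-T$, which by the first step is sectorial of angle $<\pi/2$ near $0$, and polynomial boundedness of $T$ supplies the square-function estimates for $A_j$ and $A_j^*$ that upgrade the Cauchy integral to a bounded $\HI$-calculus over a sector of angle $<\pi/2$, equivalently a bounded functional calculus for $T$ over a cone at $\xi_j$ inside $\Delta$ (this is precisely the ``natural functional calculus of Ritt$_E$ operators'' the paper develops). Summing the $n$ local contributions and adding the piece carried by $\sigma(T)\setminus\bigcup_jB(\xi_j,\delta)$ — where $(z-T)^{-1}$ is uniformly bounded on a contour by (\ref{ResEst}) and compactness — yields $\norm{\psi(T)}\lesssim\norm{\phi}_{\infty,\Delta}$, hence the claimed polygonal calculus.

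The main obstacle is this last step: obtaining a bound for $\psi(T)$ that is uniform in $\psi$, equivalently in $\deg\phi$. Everything before it is soft — the resolvent extension is the standard Ritt argument and the interpolation is elementary — but passing from the $\HI(\D)$-bound furnished by polynomial boundedness to one adapted to the corner-ed polygon $\Delta$, in the presence of a resolvent blowing up like $\abs{z-\xi_j}^{-1}$ at each vertex, genuinely requires the quadratic/square-function machinery special to Hilbert space; the role of the resolvent estimate (\ref{ResEst}) is exactly to pin the geometry of $\sigma(T)$ and of the resolvent near each $\xi_j$ to that of a sectorial operator of angle $<\pi/2$, so that this machinery applies.
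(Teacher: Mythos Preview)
Your outline is sound through the preliminary steps: the extension of the resolvent estimate to Stolz-type cones is exactly Lemma~\ref{Sect}/Lemma~\ref{LEstolz}, and the Lagrange reduction to a polynomial vanishing on $E$ is Proposition~\ref{PRhinftypoly}. (One small correction: the polygon $\Delta$ cannot in general have \emph{only} the $\xi_j$ as vertices --- think $N=1$ or $N=2$, or $\sigma(T)$ not contained in the convex hull of $E$ --- so you must add auxiliary vertices inside $\D$, as in the proof of Theorem~\ref{THhinftyLp}.)

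The genuine gap is the sentence ``polynomial boundedness of $T$ supplies the square-function estimates for $A_j$ and $A_j^*$ that upgrade the Cauchy integral to a bounded $\HI$-calculus over a sector of angle $<\pi/2$.'' You assert this but give no mechanism, and in fact by Remark~\ref{Easy} the statement ``each $A_j$ has a bounded $H^\infty(\Sigma_{\theta_j})$ calculus with $\theta_j<\pi/2$'' is \emph{equivalent} to the conclusion you are trying to prove, so invoking it is circular. Polynomial boundedness gives you control of $\phi(T)$ only for $\phi$ bounded on the whole disc $\D$; there is no direct route from that to sectorial square functions for $A_j$, which live on a strictly smaller domain. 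The paper's proof (Theorem~\ref{thPolyHI}, specialised to Hilbert space in Remark~\ref{Hilbert}) avoids this circularity: it never establishes a sectorial calculus for the $A_j$ as an intermediate step. Instead it constructs a decomposition of unity $1=\sum_i\theta_i\phi_i\psi_i$ on $\D$ (Proposition~\ref{DecPri}) with $\sup_z\sum_i|\phi_i(z)|<\infty$ and $\sup_z\sum_i|\psi_i(z)|<\infty$; then polynomial boundedness applied to the $H^\infty(\D)$-functions $\sum_i\epsilon_i\phi_i$ and $\sum_i\epsilon_i\psi_i$ controls the ``square-function'' pieces $\bigl\|\sum_i\epsilon_i\otimes\phi_i(T)x\bigr\|_{\Rad(H)}$ and $\bigl\|\sum_i\epsilon_i\otimes\psi_i(T)^*y\bigr\|_{\Rad(H^*)}$ directly, while the Ritt$_E$ resolvent bound (which on Hilbert space is automatically an $R$-bound) handles the localising factors $h\theta_i$. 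The Hilbert structure enters not through McIntosh's theorem for the $A_j$, but through the trivial fact that bounded sets in $B(H)$ are $R$-bounded.
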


This remarkable result also follows from
\cite[Theorem 5.5]{FMI}. Note that it is already significant when 
$T\colon H\to H$ is a contraction.

The motivation for this paper is to understand polygonal 
functional calculus in the Banach space setting.

Our first main result is a Banach 
space version of Theorem \ref{TH0} relying on the notion of 
$R$-boundedness (for which we refer e.g. to \cite[Chapter 8]{HVVW}).
We prove (see Corollary
\ref{Main1} and Remark \ref{RRE}) that if a polynomially bounded
operator $T\colon X\to X$ has a
finite peripheral spectrum and if for any
$\xi\in\sigma(T)\cap\T$, there exists a 
neighborhood ${\mathcal V}$ of $\xi$ such that 
the set 
$$
\bigl\{(\xi-z)(z-T)^{-1}\, :\,
z\in{\mathcal V}\cap \overline{\D}^c\bigr\}
$$
is $R$-bounded, then 
$T$ admits a bounded polygonal functional 
calculus. We will see that such a  bounded functional 
calculus holds with respect to a polygon $\Delta\subset\D$
such that $\overline{\Delta}\cap\T=\sigma(T)\cap\T$ 
(see Remark \ref{Sharp}). In the case when the
peripheral spectrum $\sigma(T)\cap\T$ is a singleton,
this result reduces to \cite[Proposition 7.7]{LM1}, established for Ritt operators.

Our second main result 
concerns positive contractions 
on $L^p$-spaces. Let $1<p<\infty$, let $(S,\mu)$ be a measure space
and let $T\colon L^p(S)\to L^p(S)$ be 
a positive contraction. We show (see Theorem \ref{MainLP})
that if $T$ has a
finite peripheral spectrum and if (\ref{ResEst}) holds true,
then
$T$ admits a bounded polygonal 
functional calculus. Note that in this result,
we do not need to assume that $T$ is polynomially bounded.
The latter comes as a consequence of the bounded polygonal 
functional calculus. This result, which holds as well
if $T\colon L^p(S)\to L^p(S)$ is a contractively regular operator,
should be regarded as an $L^p$-version of 
the case when $\norm{T}\leq 1$ in Theorem \ref{TH0}.

In order to study operators with finite peripheral spectrum and
to obtain the aforementioned results, it is relevant to introduce,
for a finite set $E=\{\xi_1,\ldots,\xi_N\}\subset\T$, the notion of
Ritt$_E$ operator, see Definition \ref{DefRittE} below. This
is a natural generalization of Ritt operators, the latter having attracted a
lot of attention recently, see e.g. \cite{AFL,ALM,Bl,GT,KP,LM1,
LMX,Ly,NZ,N,V}. Section 2 is 
devoted to the study of
Ritt$_E$ operators. 
In particular we establish an analogue of 
the well-known theorem which asserts that $T\colon X\to X$ is a Ritt 
operator if and only if the two sets  
$\{T^n\, :\, n\geq 0\}$ and $\{n(T^n-T^{n-1})\, :\, n\geq 1\}$ are bounded.
This is achieved in Theorem \ref{THcaracterization}. In Sections
3 and 4, we relate the polygonal functional calculus to a natural
functional calculus associated with Ritt$_E$ operators, and prove
our main results.


\smallskip
\section{Ritt$_E$ operators}
Let $X$ be a Banach space. We let $B(X)$ denote the Banach algebra of all bounded 
operators on $X$, equipped with its usual norm. We denote by
$I_X$ the identity operator on $X$. For any 
$T\in B(X)$, we let $\sigma(T)$ denote the spectrum of $T$ and we let 
$R(z,T)=(z-T)^{-1}$ denote the resolvent operator, 
when $z$ belongs to the resolvent set $\C\setminus \sigma(T)$.

For any $a\in\C$ and any $r>0$, we let $D(a,r)\subset\C$ denote 
the open disc of radius $r$ centered at $a$. 
We recall that we use the notations ${\mathbb D}=D(0,1)$ and $\T$ for the 
the open unit disc and for its boundary, respectively.

\smallskip
\subsection{Definition and basic facts}
We consider distinct complex numbers
$\xi_1 , ... , \xi_N$ of modulus 1, for some $N\geq 1$, and we let 
\begin{equation}\label{E}
E = \{ \xi_1 , ... , \xi_N \}\subset\T.
\end{equation}

\begin{definition}\label{DefRittE}
Let $X$ be a Banach space and let $T \in B(X)$. 
We say that $T$ is Ritt$_E$ (or is a Ritt$_E$ operator) if 
$\sigma(T)\subset \overline{\D}$ and there exists a constant $c>0$ such that 
\begin{equation}\label{RE}
\|R(z,T)\| \leq\, \frac{c}{\prod\limits_{j=1}^N \abs{\xi_j - z}},
\qquad
z\in\C,\ 1<\vert z\vert <2.
\end{equation}
\end{definition}

Note that when $N=1$ and $E=\{1\}$, Ritt$_E$ 
operators coincide with the usual Ritt operators, for which
we refer e.g. to \cite{AFL,ALM,Bl,GT,KP,LM1,
LMX,Ly,NZ,N,V}.

\begin{lemma}\label{PRspectrum}
If $T\in B(X)$ is a Ritt$_E$ operator, then 
$\sigma(T) \subset \D \cup E$.
\end{lemma}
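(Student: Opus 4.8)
The goal is to show that if $T$ is Ritt$_E$, then $\sigma(T)\subset\D\cup E$. The plan is to argue that no point of $\T\setminus E$ can lie in $\sigma(T)$. Since $\sigma(T)\subset\overline{\D}$ by definition, the only candidates for spectral points on the unit circle are in $\T$, so it suffices to rule out $\zeta\in\sigma(T)$ for $\zeta\in\T$, $\zeta\notin E$. The estimate (\ref{RE}) controls the resolvent only for $1<\vert z\vert<2$, i.e. just outside the disc, so the strategy is to use this one-sided bound together with the fact that the resolvent norm blows up near the spectrum.

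First I would fix $\zeta\in\T\setminus E$ and suppose, for contradiction, that $\zeta\in\sigma(T)$. The key quantitative fact is that for any bounded operator $S$ and any $z$ in the resolvent set, $\norm{R(z,S)}\geq \operatorname{dist}(z,\sigma(S))^{-1}$; equivalently, if $z_n\to\zeta\in\sigma(T)$ through the resolvent set, then $\norm{R(z_n,T)}\to\infty$ with a rate at least $\vert z_n-\zeta\vert^{-1}$. I would choose the approach sequence $z_n = (1+\tfrac1n)\zeta$, which satisfies $1<\vert z_n\vert<2$ for $n\geq 1$, lies in $\C\setminus\sigma(T)$ (it is outside $\overline{\D}$), and converges to $\zeta$ radially with $\vert z_n-\zeta\vert = \tfrac1n$. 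Thus $\norm{R(z_n,T)}\geq n$.

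On the other hand, (\ref{RE}) gives $\norm{R(z_n,T)}\leq c\big/\prod_{j=1}^N\vert\xi_j-z_n\vert$. Since $\zeta\notin E$, we have $\vert\xi_j-\zeta\vert>0$ for each $j$, so $\prod_{j=1}^N\vert\xi_j-z_n\vert\to\prod_{j=1}^N\vert\xi_j-\zeta\vert>0$; in particular this product is bounded below by a positive constant for large $n$, and hence $\norm{R(z_n,T)}$ stays bounded. This contradicts $\norm{R(z_n,T)}\geq n\to\infty$. Therefore $\zeta\notin\sigma(T)$, and combining with $\sigma(T)\subset\overline{\D}$ we conclude $\sigma(T)\subset\D\cup E$.

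There is no real obstacle here; the only point requiring a touch of care is justifying the lower bound $\norm{R(z,T)}\geq\operatorname{dist}(z,\sigma(T))^{-1}$, which is standard (if $\norm{R(z,T)}<\operatorname{dist}(z,\sigma(T))^{-1}$ then the Neumann series for $R(w,T)$ converges for all $w$ with $\vert w-z\vert\leq\operatorname{dist}(z,\sigma(T))$, forcing some spectral point to be outside $\sigma(T)$, a contradiction). One should also note the reason for choosing a radial approach from outside: it is exactly to stay in the region $1<\vert z\vert<2$ where the hypothesis (\ref{RE}) applies, since the definition of Ritt$_E$ provides no information about the resolvent inside $\D$.
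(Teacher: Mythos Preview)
Your proof is correct and follows essentially the same approach as the paper: approach a boundary spectral point radially from outside, combine the standard lower bound $\norm{R(z,T)}\geq\operatorname{dist}(z,\sigma(T))^{-1}$ with the Ritt$_E$ upper bound (\ref{RE}), and conclude. The paper phrases it as a direct argument (deriving $\prod_j\vert\xi_j-(1+t)z\vert\leq ct$ and letting $t\to 0$ to force $z\in E$) rather than a contradiction, but the mathematical content is the same.
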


\begin{proof}
Let $z\in \sigma(T) \cap \T$.
For any $\lambda\in\C$ with $\vert \lambda\vert >1$, we have
$$
\norm{R(\lambda,T)}\geq\,\frac{1}{{\rm dist}(\lambda,\sigma(T))}\,.
$$
Further for any integer $t>0$,
${\rm dist}\bigl((1+t)z,\sigma(T)\bigr)\leq t$. Hence 
assuming (\ref{RE}), we have
$$
\prod\limits_{j=1}^N \bigl\vert\xi_j - (1+t)
z\bigr\vert\,\leq\, ct,
\qquad t>0.
$$
Letting $t\to 0$, this implies that $z=\xi_j$ for some $j\in\{1,\ldots,N\}$.
\end{proof}

\begin{remark}\label{RKradius}
Let $T\in B(X)$ such that $\sigma(T)\subset \overline{\D}$.

\smallskip
(a) 
For any $R>1$, (\ref{RE}) is equivalent to the existence of some constant 
$c_R >0$ such that 
$$
\|R(z,T)\| \leq \,\frac{c_R}{\prod\limits_{j=1}^N \abs{\xi_j - z}},
\qquad
z\in\C,\ 1<\vert z\vert <R.
$$

\smallskip
(b) We have $\norm{R(z,T)} =O(\vert z\vert^{-1})$ when $\vert z\vert\to\infty$, hence 
(\ref{RE}) is equivalent to the existence of some 
constant $C>0$ such that 
$$
\|R(z,T)\| \leq
\,C\, \max\limits_{j\in\{1,..,N\}} \frac{1}{\abs{\xi_j-z}}\,,
\qquad z\in \overline{\D}^c.
$$

\smallskip
(c)  Using Lemma  \ref{PRspectrum},  we see that
the existence of a finite subset $E\subset\T$ such that 
$T$ is Ritt$_E$ is equivalent to the following:
\begin{itemize}
\item[-] The peripheral spectrum $\sigma(T)\cap\T$ is finite and there exists  $C>0$ such that 
$$
\|R(z,T)\| \leq
\,C\, \max\biggl\{\frac{1}{\abs{\xi-z}}\, : \, \xi\in \sigma(T)\cap\T\biggr\},
\qquad z\in \overline{\D}^c.
$$
\end{itemize}
\end{remark}


\smallskip
\subsection{Auxiliary tools}\label{Aux}
For any $\omega\in(0,\pi)$, we let
$$
\Sigma_\omega=\bigl\{\lambda\in\C^*\, :\, \vert{\rm Arg}(
\lambda)\vert <\omega\bigr\}
$$
be the open sector of angle $2\omega$ around the positive real axis.

Given any $A\in B(X)$, we say that $A$ is sectorial of type $\omega$ if 
$\sigma(A)\subset\overline{\Sigma_\omega}$ and
for any $\nu\in(\omega,\pi)$, there exists a constant $K_\nu>0$ such that
$$
\norm{\lambda R(\lambda,A)}\leq K_\nu,\
\qquad\lambda\in\overline{\Sigma_\nu}^c.
$$
It is well-known that if $A\in B(X)$ is such that 
$\sigma(A)\subset\overline{\Sigma_{\frac{\pi}{2}}}$ and
$\{\lambda R(\lambda,A)\,:\,\lambda\in
\overline{\Sigma_{\frac{\pi}{2}}}^c\}$ is bounded, then
there exists $\omega\in \bigl(0,\frac{\pi}{2}\bigr)$
such that $A$ is sectorial of type $\omega$.
We will use this property in the next proof.
We refer e.g. to \cite[Section 10]{HVVW} 
for information on sectorial operators.

\begin{lemma}\label{Sect}
Assume that $T\in B(X)$ is a Ritt$_E$ operator. Then
for any $j=1,\ldots,N$, the operator
\begin{equation}\label{Aj}
A_j = I_X -\overline{\xi_j} T
\end{equation}
is a sectorial operator of type $<\frac{\pi}{2}$.
\end{lemma}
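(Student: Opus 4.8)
The plan is to reduce the sectoriality of $A_j = I_X - \overline{\xi_j}T$ to the resolvent estimate already available from the Ritt$_E$ hypothesis, using the fact recalled just before the lemma: it suffices to show that $\sigma(A_j) \subset \overline{\Sigma_{\pi/2}}$ and that $\{\lambda R(\lambda, A_j) : \lambda \in \overline{\Sigma_{\pi/2}}^c\}$ is bounded. First I would record the spectral mapping theorem $\sigma(A_j) = \{1 - \overline{\xi_j}z : z \in \sigma(T)\}$. By Lemma~\ref{PRspectrum} we have $\sigma(T) \subset \D \cup E$; under the map $z \mapsto 1 - \overline{\xi_j}z$ the open disc $\D$ goes to the open disc $D(1,1)$, which is contained in the right half-plane $\overline{\Sigma_{\pi/2}}$, and each $\xi_k \in E$ goes to $1 - \overline{\xi_j}\xi_k$, a point of modulus $\le 2$ lying on the circle $\partial D(1,1)$, hence also in $\overline{\Sigma_{\pi/2}}$. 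So $\sigma(A_j) \subset \overline{\Sigma_{\pi/2}}$, and moreover $\sigma(A_j)$ is a compact subset of $\overline{D(1,1)}$.

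Next I would translate the resolvent. For $\lambda \notin \sigma(A_j)$, writing $z = \overline{\xi_j}^{-1}(1-\lambda) = \xi_j(1-\lambda)$, one has $\lambda - A_j = \overline{\xi_j}(z - T)$ as long as this $z$ lies outside $\sigma(T)$, so $R(\lambda, A_j) = \xi_j R(z, T)$, equivalently $R(\lambda, A_j) = \xi_j R(\xi_j(1-\lambda), T)$. Now fix $\lambda \in \overline{\Sigma_{\pi/2}}^c$, i.e. $\Re(\lambda) < 0$ or, more conveniently, I would split into the region where $\vert\lambda\vert$ is large and the region where $\lambda$ stays in a bounded set. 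When $\vert\lambda\vert \ge 3$, the point $z = \xi_j(1-\lambda)$ satisfies $\vert z\vert \ge 2$, so $\vert z\vert > 1$ and $\norm{R(z,T)} = O(\vert z\vert^{-1}) = O(\vert\lambda\vert^{-1})$ by Remark~\ref{RKradius}(b) (the $O(\vert z\vert^{-1})$ decay of any bounded operator's resolvent), giving $\norm{\lambda R(\lambda, A_j)} = \vert\lambda\vert\,\norm{R(z,T)} = O(1)$.

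For the bounded region I would use that $\Re(\lambda) < 0$ forces $\vert 1 - \lambda\vert > 1$, hence $z = \xi_j(1-\lambda)$ has $\vert z\vert > 1$, so it lies in the domain where the Ritt$_E$ estimate (in the form of Remark~\ref{RKradius}(a), valid for $1 < \vert z\vert < R$ with any $R$, or Remark~\ref{RKradius}(b) on all of $\overline{\D}^c$) applies: $\norm{R(z,T)} \le C \max_k \vert\xi_k - z\vert^{-1}$. Translating back, $\xi_k - z = \xi_k - \xi_j(1-\lambda) = \xi_j(\overline{\xi_j}\xi_k - 1 + \lambda) = \xi_j(\lambda - (1 - \overline{\xi_j}\xi_k))$, and $1 - \overline{\xi_j}\xi_k$ is exactly an eigenvalue $\mu_{j,k}$ of $A_j$ lying in $\overline{\Sigma_{\pi/2}}$; for $k = j$ this is $0$. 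Thus $\norm{\lambda R(\lambda, A_j)} \le C\vert\lambda\vert \max_k \vert \lambda - \mu_{j,k}\vert^{-1} \le C\vert\lambda\vert\,\vert\lambda\vert^{-1} = C$ using the $k=j$ term $\vert\lambda - 0\vert^{-1} = \vert\lambda\vert^{-1}$. This already handles all $\lambda$ with $\Re(\lambda)<0$ uniformly, so no case split is even needed; combining with the observation about $\sigma(A_j)$, the operator $A_j$ satisfies the hypotheses of the recalled criterion, hence is sectorial of some type $\omega < \pi/2$. The only mild subtlety — the ``main obstacle,'' though it is minor here — is bookkeeping the change of variables $z \leftrightarrow \lambda$ and checking that $z = \xi_j(1-\lambda)$ genuinely avoids $\sigma(T)$ precisely when $\lambda$ avoids $\sigma(A_j)$, which is immediate from the spectral mapping identity above; everything else is a direct substitution into the Ritt$_E$ bound.
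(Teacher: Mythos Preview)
Your overall strategy is exactly the paper's: pass from $R(\lambda,A_j)$ to $R(\xi_j(1-\lambda),T)$ and feed in the Ritt$_E$ resolvent bound. However, the key inequality you write is false. You assert
\[
\|\lambda R(\lambda,A_j)\|\ \le\ C\,|\lambda|\,\max_{1\le k\le N}\,|\lambda-\mu_{j,k}|^{-1}\ \le\ C\,|\lambda|\cdot|\lambda|^{-1}=C,
\]
``using the $k=j$ term $|\lambda-0|^{-1}=|\lambda|^{-1}$''. A maximum is bounded \emph{below} by each individual term, not above, so the second inequality goes the wrong way. Concretely, take $N=2$ with $\xi_2=i\xi_1$ so that $\mu_{1,2}=1-i$, and let $\lambda=-\tfrac{1}{10}-2i\in\overline{\Sigma_{\pi/2}}^{\,c}$. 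Then $|\lambda|^2=4.01$ while $|\lambda-\mu_{1,2}|^2=(1.1)^2+1^2=2.21$, so $|\lambda-\mu_{1,2}|^{-1}>|\lambda|^{-1}$ and your bound fails.

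The fix is short. Either (a) keep your max form and argue that for each $k\neq j$ the ratio $|\lambda|\,/\,|\lambda-\mu_{j,k}|$ is bounded on $\{\Re\lambda<0\}$: indeed $\Re(\mu_{j,k})=1-\Re(\overline{\xi_j}\xi_k)>0$, so $|\lambda-\mu_{j,k}|\ge \Re(\mu_{j,k})$ gives a bound for $|\lambda|$ small, while $|\lambda-\mu_{j,k}|\ge|\lambda|-|\mu_{j,k}|$ handles $|\lambda|$ large; combined with the $k=j$ term (ratio $\equiv 1$) this bounds the maximum. Or (b) do what the paper does and use the \emph{product} form~(\ref{RE}) of the Ritt$_E$ estimate instead of Remark~\ref{RKradius}(b): then
\[
\|\lambda R(\lambda,A_j)\|\ \le\ \frac{c\,|\lambda|}{\prod_{k}|\xi_k-\xi_j(1-\lambda)|}
\ =\ \frac{c}{\prod_{k\neq j}|\xi_k-\xi_j(1-\lambda)|},
\]
since the $k=j$ factor equals $|\lambda|$ exactly; each remaining factor $|\xi_k-\xi_j(1-\lambda)|$ is bounded away from $0$ on $\overline{\Sigma_{\pi/2}}^{\,c}$ because its unique zero $\lambda=1-\overline{\xi_j}\xi_k$ has strictly positive real part. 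This is cleaner than the max route because the cancellation with $|\lambda|$ is exact.

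One minor slip: from $\lambda-A_j=\lambda-1+\overline{\xi_j}T=-\overline{\xi_j}\bigl(\xi_j(1-\lambda)-T\bigr)$ one gets $R(\lambda,A_j)=-\xi_j R(\xi_j(1-\lambda),T)$, not $+\xi_j R(\cdots)$; this sign is irrelevant for the norm estimate but worth correcting.
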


\begin{proof}
Let 
$\lambda\in \overline{\Sigma_{\frac{\pi}{2}}}^c$. Then
$\xi_j(1-\lambda)\in\overline{\D}^c$ and 
an elementary computation shows
that $\lambda$ belongs to the resolvent set of $A_j$, with
\begin{equation}\label{Transfer1}
\lambda R(\lambda, A_j) =  -\lambda\xi_j R(\xi_j(1-\lambda),T).
\end{equation}
Hence assuming (\ref{RE}), we have
$$
\norm{\lambda R(\lambda,A_j)}\leq
\, \frac{c\,\vert\lambda\vert}{\prod\limits_{k=1}^N \abs{\xi_k - \xi_j(1-\lambda)}}
\, =\, \frac{c}{\displaystyle{\prod_{\substack{1 \leq k \leq N \\ k \neq j}}}
\abs{\xi_k - \xi_j(1-\lambda)}}\,.
$$
Now observe that 
for any $k\not= j$, the set 
$\{\vert\xi_k -\xi_j(1-\lambda)\vert\, :\,\lambda\in
\overline{\Sigma_{\frac{\pi}{2}}}^c\}$
is bounded away from $0$.
We deduce that
$\{\lambda R(\lambda, A_j)\, :\,\lambda\in
\overline{\Sigma_{\frac{\pi}{2}}}^c\}$ is bounded,
which implies that $A_j$ is sectorial of type $<\frac{\pi}{2}$.
\end{proof}

\begin{remark}\label{Transfer2}
For $T$ as above, (\ref{Transfer1}) is valid
for any $\lambda$ in the resolvent set of $A_j$. Equivalently,
for any $z$ in the resolvent set of $T$, we have
\begin{equation}\label{Transfer3}
(\xi_j-z) R(z,T)=\,
-\overline{\xi_j}(\xi_j-z)R\bigl(
\overline{\xi_j}(\xi_j-z), A_j\bigr).
\end{equation}
\end{remark}

\bigskip
When one studies Ritt operators and their functional calculus,
it is useful to consider, for any angle $\omega \in 
\bigl(0,\frac \pi 2\bigr)$, 
the Stolz domain $B_\omega\subset 1-\Sigma_\omega$ 
defined as  the interior of the convex hull of 1 
and the disc $D(0,\sin(\omega))$ (see e.g. \cite{ALM,LM1}).
We will introduce similar domains adapted to the study of Ritt$_E$ operators.

\begin{definition}\label{ER}
Let $E=\{\xi_1,\ldots,\xi_N\}$ as in (\ref{E}) and let $r \in (0,1)$. 
We let $E_r$ denote the interior of the convex hull of $D(0,r) \cup E$, 
see Figure \ref{fig:stolz}.

\begin{figure}[!h]
    \includegraphics[scale=0.25]{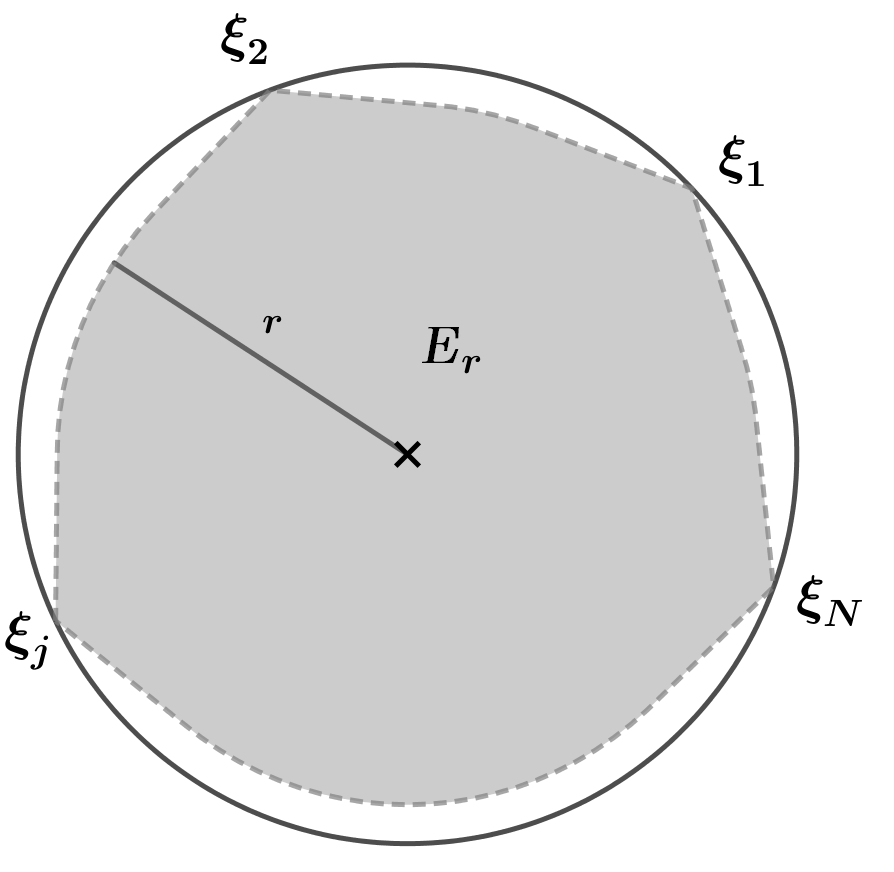}
    \caption{The ``generalized" Stolz domain $E_r$}
    \label{fig:stolz}
\end{figure}
\end{definition}

\begin{remark}\label{Warning}
Assume that $N\geq 2$ and that the sequence $(\xi_1,\xi_2,\ldots,\xi_N)$
is oriented counterclockwise on $\T$. Set $\xi_{N+1}=\xi_1$. We note that 
if $r\in(0,1)$ is such that $[\xi_j,\xi_{j+1}]\cap \overline{D}(0,r)\not=\emptyset$ 
for all $j=1,\ldots N$ (as shown on Figure \ref{fig:stolz}), then for all 
$r\leq u<s<1$, we have $\partial E_u\setminus E\subset E_s$.
In this case we will say that $r$ is $E$-large enough. 

If $N=1$, any 
$r\in(0,1)$ will be called $E$-large enough.
\end{remark}

For any $\xi\in\C^*$ and $\omega\in\bigl(0,\frac{\pi}{2}\bigr)$, we set
\begin{equation}\label{Sigma-xi}
\Sigma(\xi,\omega)  = \xi(1 -\Sigma_\omega).
\end{equation}
This is the open sector with vertex $\xi$ and angle
$2\omega$ around the semi-axis $[\xi,0)$, see Figure \ref{fig:polygonal sector}.

\begin{lemma}\label{LEstolz}
An operator $T \in B(X)$ is Ritt$_E$ if and only if there exists 
$r \in (0,1)$ which is $E$-large enough (in the sense of Remark \ref{Warning}),
such that the following two conditions hold:
\begin{itemize}
\item[(i)] $\displaystyle{\sigma(T) \subset \overline{E_r}}$;
\item[(ii)] For all $s\in(r,1)$, there exists a constant $c>0$
such that 
$$
\|R(z,T)\| \leq\, \frac{c}{\prod\limits_{j=1}^N \abs{\xi_j - z}},
\qquad
z\in D(0,2)\setminus \overline{E_s}.
$$
\end{itemize}
\end{lemma}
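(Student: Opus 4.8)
The plan is to prove the equivalence by transferring between the ``annular'' resolvent estimate (\ref{RE}) on $1<|z|<2$ and the ``sectorial'' estimate near each vertex $\xi_j$, using Lemma \ref{Sect} and the transfer identity (\ref{Transfer3}) from Remark \ref{Transfer2}. The geometric heart of the matter is the following dictionary: near a vertex $\xi_j$, the region $D(0,2)\setminus\overline{E_s}$ looks (after the affine change of variables $z\mapsto \overline{\xi_j}(\xi_j-z)$, which maps $\xi_j$ to $0$) like the complement of a sector $\Sigma_{\omega_s}$ of some angle $\omega_s<\frac\pi2$ depending on $s$, while away from all vertices the denominator $\prod_j|\xi_j-z|$ is bounded above and below by positive constants, so estimate (ii) there is just the statement that $\|R(z,T)\|$ is bounded on a compact set disjoint from $\sigma(T)$, which is automatic once we know (i).

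For the implication ``Ritt$_E$ $\Rightarrow$ (i) and (ii)'': condition (i) needs a small strengthening of Lemma \ref{PRspectrum}. From Lemma \ref{Sect}, each $A_j=I_X-\overline{\xi_j}T$ is sectorial of some type $\omega_j<\frac\pi2$; set $\omega=\max_j\omega_j$. Via (\ref{Sigma-xi}) this forces $\sigma(T)\subset\bigcap_j\overline{\Sigma(\xi_j,\omega)}$ near the circle. Combined with $\sigma(T)\subset\overline{\D}$ and the fact that $\sigma(T)$ is compact and, by Lemma \ref{PRspectrum}, meets $\T$ only in $E$, one checks that $\sigma(T)$ is contained in $\overline{E_r}$ for $r$ close enough to $1$: indeed the complement of $\overline{E_r}$ in $\overline{\D}$, for $r\to 1$, shrinks to a union of ``cusp-free'' sectorial wedges at the $\xi_j$, and a point of $\sigma(T)$ in such a wedge arbitrarily close to $\xi_j$ would contradict sectoriality of $A_j$ of angle $<\frac\pi2$ (the standard Stolz-domain argument for Ritt operators, applied at each vertex). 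Once $r$ is fixed this way, take any $s\in(r,1)$; the set $D(0,2)\setminus\overline{E_s}$ is a compact subset of the resolvent set near which, at each vertex, the geometry is sectorial of angle $\le\nu<\frac\pi2$ with $\nu$ depending on $s$ but $\nu<\pi/2$ for $s$ sufficiently (all of) $(r,1)$; applying the sectoriality estimate $\|\lambda R(\lambda,A_j)\|\le K_\nu$ through (\ref{Transfer3}) gives $\|(\xi_j-z)R(z,T)\|\le K_\nu$ on the part of $D(0,2)\setminus\overline{E_s}$ near $\xi_j$, and dividing by the other factors $|\xi_k-z|$ (bounded below there) yields (ii); on the remaining part, away from all vertices, $\|R(z,T)\|$ is just bounded by compactness, and again the $\prod|\xi_j-z|$ is bounded, so (ii) holds there trivially.

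For the converse, assume (i) and (ii) for some $r$ and all $s\in(r,1)$. We must recover (\ref{RE}) on the full annulus $1<|z|<2$. Split this annulus: the part outside $\overline{E_s}$ for a fixed $s\in(r,1)$ is handled directly by (ii). The part inside $\overline{E_s}$ but with $|z|>1$ is a union of $N$ small compact ``lens'' regions, one near each $\xi_j$, each sitting inside $\overline{E_s}\setminus\D$ and touching $\T$ only at $\xi_j$; on each such region, near $\xi_j$ the complement of $\overline{\D}$ is contained in a sector $\Sigma(\xi_j,\theta)$ with $\theta<\frac\pi2$, and (i) together with the openness of the resolvent set (since $\sigma(T)\subset\overline{E_r}\subset\overline{E_s}$, but the part of this lens that lies outside $\overline{\D}$ need not be disjoint from $\sigma(T)$ — here one must instead use that $\sigma(T)\cap\overline{\D}^c$ is empty, wait, $\sigma(T)\subset\overline\D$, so actually the whole annulus $1<|z|<2$ is in the resolvent set). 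Thus on the lens near $\xi_j$, $z$ ranges over a subset of $\overline{\D}^c$ on which, via (\ref{Transfer3}), estimating $R(z,T)$ reduces to estimating $\lambda R(\lambda,A_j)$ for $\lambda=\overline{\xi_j}(\xi_j-z)$ in a truncated sector, and the needed bound there follows from combining the estimate (ii) at the boundary $\partial E_s$ with a maximum-principle / interpolation-type argument for the analytic $B(X)$-valued function $\lambda\mapsto\lambda R(\lambda,A_j)$ on the sector — or more simply, one re-runs the proof of Lemma \ref{Sect} in reverse: the bound (ii) on $D(0,2)\setminus\overline{E_s}$ already gives boundedness of $\lambda R(\lambda,A_j)$ on the complement of a sector $\overline{\Sigma_\nu}$ with $\nu<\pi/2$, hence by the quoted fact $A_j$ is sectorial of some type $<\pi/2$, which gives the resolvent bound on the whole complementary sector, in particular on the lens; multiplying back by the bounded factors $|\xi_k-z|^{-1}$ recovers (\ref{RE}) near $\xi_j$, and patching the $N$ vertices together with the (ii)-region completes the annulus.

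I expect the main obstacle to be the first implication's Step establishing $\sigma(T)\subset\overline{E_r}$ for $r$ near $1$ — i.e.\ promoting the ``spectrum meets $\T$ only in $E$'' of Lemma \ref{PRspectrum} to a genuine Stolz-type (no-cusp) containment. The key input is sectoriality of the $A_j$ of angle strictly less than $\frac\pi2$ from Lemma \ref{Sect}: if a sequence $z_n\in\sigma(T)$ approached $\xi_j$ from outside $\overline{E_r}$ for every $r<1$, then $\lambda_n=\overline{\xi_j}(\xi_j-z_n)\to 0$ would have argument approaching $\pm\frac\pi2$ (the wedge $\overline{\D}\setminus\overline{E_r}$ near $\xi_j$ opens up to the full half-plane as $r\to1$), forcing $\sigma(A_j)$ to come arbitrarily close to the imaginary axis, contradicting $\sigma(A_j)\subset\overline{\Sigma_{\omega_j}}$ with $\omega_j<\frac\pi2$. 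Once this containment is secured the rest is bookkeeping with compactness and the transfer identity; the converse is comparatively routine given the ``reverse Lemma \ref{Sect}'' observation above.
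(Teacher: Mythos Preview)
Your argument for the nontrivial direction (Ritt$_E$ $\Rightarrow$ (i) and (ii)) is essentially the paper's: sectoriality of each $A_j$ from Lemma~\ref{Sect} pins down $\sigma(T)$ in a Stolz-type region near each vertex, compactness of $\sigma(T)\setminus\bigcup_j D(\xi_j,\eta)$ together with Lemma~\ref{PRspectrum} handles the rest of the spectrum, and then the transfer identity (\ref{Transfer3}) plus a compactness argument away from the vertices gives (ii). The paper organizes this last step slightly differently, writing $h(z)=R(z,T)\prod_k(\xi_k-z)$ and covering $\D\setminus\overline{E_s}$ by an explicit union $F_0\cup F_1$ (complements of sectors at the vertices, plus a compact annular piece), but the content is the same.

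For the converse you have over-thought things because of a geometric slip: $E_s$ is the interior of the convex hull of $D(0,s)\cup E$, hence $\overline{E_s}\subset\overline{\D}$, and therefore the entire annulus $\{1<|z|<2\}$ already lies in $D(0,2)\setminus\overline{E_s}$. There are no ``lens regions'' $\overline{E_s}\cap\{|z|>1\}$ to worry about, and (ii) immediately yields (\ref{RE}); this is why the paper dismisses the `if' part as obvious. Your reverse-Lemma~\ref{Sect} manoeuvre is not wrong, just vacuous here.
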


\begin{proof}
The `if part' is obvious. To prove the `only if' part,
let us assume that 
$T$ is a Ritt$_E$ operator.
For any  $j=1,\ldots,N$, consider 
$A_j$ given by (\ref{Aj}). According to Lemma \ref{Sect},
we may find $\omega\in\bigl(0,\frac{\pi}{2}\bigr)$
such that $A_j$ is sectorial of type $\omega$ for all 
$j=1,\ldots,N$. Then we have
$$
\sigma(T) \subset \overline{\Sigma(\xi_j,\omega)},\qquad j=1,\ldots N.
$$
Choosing $\omega$ close enough to $\frac{\pi}{2}$, we may  assume that we also have
$$
 E \subset \overline{\Sigma(\xi_j,\omega)},\qquad j=1,\ldots N.
$$

Choose $\eta>0$ small enough to ensure that 
$D(\xi_j,2\eta)\cap \Sigma(\xi_j,\omega)\subset\D$
for all $j=1,\ldots,N$.
Since $\sigma(T)$ is compact,
$$
F=\sigma(T) \setminus 
\left[ \bigcup\limits_{j=1}^N D(\xi_j,\eta) \right]
$$
is a compact subset of $\D$. Hence there exists $r\in(0,1)$ such that
$F\subset \overline{D}(0,r)$. We may and do assume that $r\geq\sin(\omega)$.
This ensures that $\sigma(T)\subset\overline{E_r}$ and that $r$ is $E$-large enough.

\begin{figure}[!h]
    \includegraphics[scale=0.3]{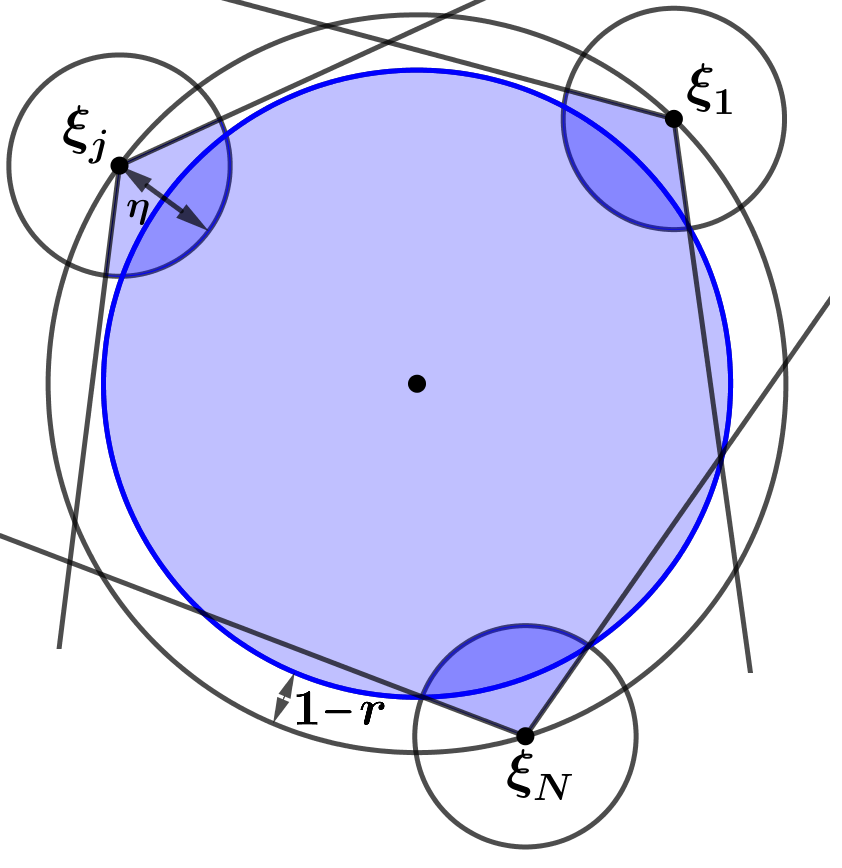}
    \caption{Set containing the spectrum of $T$}
    \label{fig:sectors}
\end{figure}

We set 
$$
h(z)=\,R(z,T)\,\prod\limits_{k=1}^N (\xi_k-z),
\qquad z\in\C\setminus\sigma(T).
$$
Let $s\in (r,1)$ and let $\beta = \arcsin(s)$. 
Then $\beta \in \bigl(\omega,\frac{\pi}{2}\bigr)$. For any $j$,
$A_j$ is sectorial of type $\omega$ hence 
by (\ref{Transfer3}),
$(\xi_j -z)R(z,T)$ is bounded on 
$\overline{\Sigma(\xi_j,\beta)}^c$. This implies that 
$h$ is bounded on $\D\cap \overline{\Sigma(\xi_j,\beta)}^c$. 
Thus if we let 
$$
F_0 = \bigcup\limits_{j=1}^N \overline{\Sigma(\xi_j,\beta)}^c,
$$
we obtain that $h$ is bounded on $\D\cap F_0$. Now consider
$$
F_1 = \biggl[\Bigl(\bigcap\limits_{j=1}^N
\overline{\Sigma(\xi_j,\beta)}\Bigr)\bigcap\bigl\{
s\leq\vert\lambda\vert\leq 1\bigr\}\biggr]\setminus  \bigcup\limits_{j=1}^N D(\xi_j,\cos(\beta)).
$$
By construction $F_1$
is a compact subset of $\C\setminus \sigma(T)$. Thus
$h$ is bounded on $F_1$.
Finally, we have 
$\D\setminus \overline{E_s}
\subset F_0\cup F_1$, hence $h$ is bounded on 
$\D\setminus \overline{E_s}$. Since $T$ is 
Ritt$_E$, this implies (ii).
\end{proof}

\begin{remark}
It follows from Lemmas \ref{PRspectrum} and
\ref{Sect} and from the proof of Lemma
\ref{LEstolz} that an operator $T\in B(X)$ is 
Ritt$_E$ if and only of the operators $A_1,\ldots,A_N$
defined
by (\ref{Aj}) are all sectorial of type $<\frac{\pi}{2}$,
and $\sigma(T)\subset \D\cup E$.
\end{remark}

\smallskip 
\subsection{A characterisation of Ritt$_E$ operators}
It is well-known that an operator $T\in B(X)$ is a Ritt operator if and only if 
the two sets $\{T^n\, :\, n\geq 0\}$ and $\{n(T^n-T^{n-1})\, :\, n\geq 1\}$ are bounded,
see \cite{Ly,NZ,N,V}. The next theorem is an extension of this result to Ritt$_E$ operators.

\begin{theorem}\label{THcaracterization}
An operator $T \in B(X)$ is Ritt$_E$ if and only if the following two conditions hold:
\begin{itemize}
\item[(i)] $T$ is power bounded, that is, there exists a constant $c_0\geq 1$ such that
$$
\|T^n\| \leq c_0, \qquad n \geq 0;
$$
\item[(ii)] There exists a constant $c_1>0$ such that 
$$
\biggnorm{T^{n-1}\prod\limits_{j = 1}^N(\xi_j-T)}\, 
\leq\,\frac {c_1}{n}\,,
\qquad n\geq 1.
$$
\end{itemize}
\end{theorem}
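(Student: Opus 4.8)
The plan is to establish the two implications separately: for ``Ritt$_E$ $\Rightarrow$ (i) and (ii)'' I would represent $T^n$ and $T^{n-1}\psi(T)$, where $\psi(z):=\prod_{k=1}^N(\xi_k-z)$, as Cauchy integrals along suitable contours in the resolvent set, and for the converse I would pass through the sectorial operators $A_j=I_X-\overline{\xi_j}T$ of Lemma~\ref{Sect}. For the forward direction, fix by Lemma~\ref{LEstolz} two radii $r<s$ in $(0,1)$ with $\sigma(T)\subset\overline{E_r}$ and $\norm{R(z,T)}\lesssim\abs{\psi(z)}^{-1}$ on $D(0,2)\setminus\overline{E_s}$; as in the proof of that lemma, $\partial E_s\subset\overline{\D}$ meets $\T$ only at $\xi_1,\dots,\xi_N$ and $\overline{E_r}\setminus\{\xi_1,\dots,\xi_N\}\subset E_s$, so a contour obtained from $\partial E_s$ by small outward modifications near the vertices still encircles $\sigma(T)$. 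For (ii) the integrand of $\frac{1}{2\pi i}\int z^{n-1}\psi(z)R(z,T)\,dz$ is bounded near the vertices, so I would integrate over the \emph{fixed} contour $\Gamma=\partial E_s$ (indented by arcs of radius $\varepsilon\to0$, whose contribution vanishes) to get $\bignorm{T^{n-1}\psi(T)}\lesssim\int_\Gamma\abs{z}^{n-1}\abs{dz}$; since $\abs{z}\le1$ on $\Gamma$, with $\abs{z}\le 1-ct$ at arclength $\sim t$ from a vertex, the vertex pieces contribute $\int_0^{t_0}e^{-c(n-1)t}\,dt\lesssim 1/n$ and the rest is exponentially small. For (i) the integrand $z^nR(z,T)$ is \emph{unbounded} at the vertices, so I would use an $n$-dependent contour $\Gamma_n$: take $\partial E_s$, but near each $\xi_j$ replace the part lying in $D(\xi_j,1/n)$ by the arc of $\partial D(\xi_j,1/n)$ outside $\overline{E_s}$ (which bulges outward, so $\Gamma_n$ still encircles $\sigma(T)$, and lies in the resolvent set once $n$ is large). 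On that arc $\norm{R(z,T)}\lesssim n$ and $\abs{z}^n=O(1)$, giving an $O(1)$ contribution; on the two tangential segments from distance $1/n$ outward, $\norm{R(z,T)}\lesssim\operatorname{dist}(z,\xi_j)^{-1}$ and $\abs{z}^n\lesssim e^{-cn\operatorname{dist}(z,\xi_j)}$, so $\int_{1/n}^{t_0}e^{-cnt}t^{-1}\,dt=\int_1^{nt_0}e^{-cu}u^{-1}\,du=O(1)$; the remainder is exponentially small. Hence $\sup_n\norm{T^n}<\infty$.

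For the converse, power boundedness gives $\sigma(T)\subset\overline{\D}$, and applying the spectral mapping theorem to $z^{n-1}\psi(z)$ shows $\abs{\mu}^{n-1}\prod_k\abs{\xi_k-\mu}\le\norm{T^{n-1}\psi(T)}\to0$ for $\mu\in\sigma(T)\cap\T$, so $\sigma(T)\cap\T\subset E$ and $\sigma(T)\subset\D\cup E$. By the remark following Lemma~\ref{LEstolz} it then suffices to show each $A_j=I_X-\overline{\xi_j}T$ is sectorial of type $<\tfrac{\pi}{2}$: the identity \eqref{Transfer3} identifies $(\xi_j-z)R(z,T)$ with $\lambda R(\lambda,A_j)$ for $\lambda=\overline{\xi_j}(\xi_j-z)$, so sectoriality of $A_j$ gives \eqref{RE} near $\xi_j$, while away from $E$ the resolvent is bounded by compactness. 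To prove $A_j$ sectorial I would show $-A_j$ generates a bounded analytic semigroup. From $e^{-tA_j}=e^{-t}e^{t\overline{\xi_j}T}$ and power boundedness, $\norm{e^{-tA_j}}\le e^{-t}\sum_k\tfrac{t^k}{k!}\norm{T^k}\le c_0$. For the derivative estimate, $\prod_kA_k=\bigl(\prod_k\overline{\xi_k}\bigr)\psi(T)$, so (ii) gives
\[
\Bignorm{\Bigl(\textstyle\prod_kA_k\Bigr)e^{-tA_j}}\le e^{-t}\sum_k\frac{t^k}{k!}\norm{T^k\psi(T)}\le e^{-t}\sum_k\frac{t^k}{k!}\,\frac{c_1}{k+1}=\frac{c_1(1-e^{-t})}{t}\,.
\]
For $N=1$ this \emph{is} $\norm{A_1e^{-tA_1}}\lesssim1/t$. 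For $N\ge2$ one uses $I_X-\overline{\xi_k}T=(1-\overline{\xi_k}\xi_j)I_X+\overline{\xi_k}\xi_jA_j$, so that $\prod_{k\ne j}A_k=q_jI_X+A_jR_j(A_j)$ with $q_j=\prod_{k\ne j}(1-\overline{\xi_k}\xi_j)\ne0$ and $R_j$ a polynomial, whence $q_jA_je^{-tA_j}=\bigl(\prod_kA_k\bigr)e^{-tA_j}-R_j(A_j)\,A_j^2e^{-tA_j}$; combined with $\norm{A_j^2e^{-tA_j}}\le\norm{A_je^{-(t/2)A_j}}^2$ and the trivial bound $\norm{A_je^{-tA_j}}\le\norm{A_j}c_0$ for small $t$, one aims to bootstrap this to $\norm{tA_je^{-tA_j}}\lesssim1$; then the semigroup characterisation of sectoriality of angle $<\tfrac{\pi}{2}$ applies.

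The main obstacle is exactly this last step when $N\ge2$: only the \emph{full} product $\prod_k(\xi_k-T)$ inherits the decay $O(1/n)$ from (ii), whereas the single factor $\xi_j-T$ appearing in $A_je^{-tA_j}$ need not decay, and squaring (ii) crudely gives only $\norm{T^n\psi(T)^2}\lesssim n^{-2}$, hence $\norm{R(z,T)}\lesssim\abs{\psi(z)}^{-2}$ --- one power short of \eqref{RE}. So the real work lies in making the bootstrap close, equivalently in showing that $\prod_{k\ne j}(\xi_k-T)$ can be effectively ``divided out'' near the spectral point $\xi_j$. By contrast, in the forward direction the only subtlety is the bookkeeping for the contour $\Gamma_n$ (that it encircles $\sigma(T)$, stays in $D(0,2)\setminus\overline{E_s}$, and that the vertex indentations are admissible), which is routine once Lemma~\ref{LEstolz} is in hand.
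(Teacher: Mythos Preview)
Your forward direction is essentially the paper's: the same $n$-dependent contour with arcs of radius $\sim 1/n$ at each $\xi_j$ for (i), and your fixed-contour variant for (ii) is a harmless simplification (the paper re-uses its $\Gamma_n$ for both, but since the integrand $z^{n-1}\psi(z)R(z,T)$ stays bounded at the vertices your limiting argument works too).

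The converse is where you diverge from the paper, and the obstacle you flag for $N\ge2$ is a genuine gap, not just missing bookkeeping. Writing $\varphi(t)=\norm{A_je^{-tA_j}}$, your identity yields
\[
\abs{q_j}\,\varphi(t)\ \le\ \frac{C}{t}\ +\ \sum_{m=2}^{N}\abs{c_m}\,\varphi(t/m)^m,
\]
and multiplying by $t$ gives $t\varphi(t)\lesssim 1+\sum_m m^m\bigl((t/m)\varphi(t/m)\bigr)^m t^{1-m}$: the quantity $M=\sup_t t\varphi(t)$ reappears on the right with coefficients that are neither small nor controllable, so the loop does not close. The structural reason is that $\prod_{k\ne j}A_k$ is not invertible when some $\xi_k\in\sigma(T)$, and hypotheses (i)--(ii) give decay only for the \emph{full} product $\psi(T)$, never for a single factor $\xi_j-T$; the semigroup route has no evident leverage beyond $N=1$.

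The paper bypasses semigroups entirely and estimates the resolvent directly. It first records (Lemma~\ref{Seifert}) the elementary factorisation
\[
\lambda^n\psi(\lambda)-T^n\psi(T)=(\lambda-T)\,\psi(\lambda)\,\lambda^{n-1}\sum_{k=0}^{n-1}\lambda^{-k}T^k+T^n(\lambda-T)Q(\lambda,T),
\]
with $Q$ a fixed two-variable polynomial. For $1<\abs{\lambda}<2$, multiply by $R(\lambda,T)$, take norms using (i)--(ii), and divide by $\abs{\lambda}^n$ to obtain
\[
\abs{\psi(\lambda)}\,\norm{R(\lambda,T)}\ \le\ \frac{c_1}{n}\,\norm{R(\lambda,T)}\ +\ c_0\bigl(n\abs{\psi(\lambda)}+c\bigr).
\]
Now choose $n$ with $n-1=\lfloor 2c_1/\abs{\psi(\lambda)}\rfloor$: then $c_1/n\le\tfrac12\abs{\psi(\lambda)}$, so the first right-hand term is absorbed into the left, while $n\abs{\psi(\lambda)}\le 2c_1+3^N$ stays bounded. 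This gives a uniform bound on $\abs{\psi(\lambda)}\,\norm{R(\lambda,T)}$, i.e.\ the Ritt$_E$ estimate. The missing idea is therefore to keep the full product $\psi$ in play throughout and let the $\lambda$-dependent choice $n\sim\abs{\psi(\lambda)}^{-1}$ perform the cancellation, rather than trying to isolate one factor $\xi_j-T$.
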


In order to prove the `if part', we will need the following
algebraic factorization property, which is a generalization
of a formula used in the
proof of \cite[Theorem 2.4]{Sei}.

\begin{lemma}\label{Seifert}
There exists a two variable complex polynomial 
$Q$ such that
for all $T\in B(X)$, all $\lambda \in \C^*$ and all $n \geq 1$,
$$
\lambda^n\prod\limits_{j=1}^N(\xi_j-\lambda) - T^n\prod\limits_{j=1}^N(\xi_j-T) = (\lambda-T) \prod\limits_{j=1}^N(\xi_j-\lambda)\lambda^{n-1} 
\sum\limits_{k=0}^{n-1}\lambda^{-k}T^k +T^n (\lambda-T)Q(\lambda,T).
$$
\end{lemma}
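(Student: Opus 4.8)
The plan is to reduce the asserted operator identity to two elementary polynomial facts: a finite telescoping (``geometric sum'') identity, and the divisibility of $p(\lambda)-p(z)$ by $\lambda-z$, where we abbreviate $p(z)=\prod_{j=1}^{N}(\xi_j-z)$. Since the scalar $\lambda$ (identified with $\lambda I_X$) commutes with $T$, all computations below take place in the commutative subalgebra of $B(X)$ generated by $I_X$ and $T$, so there is no ordering issue and it suffices to verify the corresponding identity of polynomials in two commuting indeterminates.

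First I would rewrite the first term on the right-hand side. Since $\lambda^{n-1}\sum_{k=0}^{n-1}\lambda^{-k}T^{k}=\sum_{k=0}^{n-1}\lambda^{\,n-1-k}T^{k}$, the telescoping identity $(\lambda-T)\sum_{k=0}^{n-1}\lambda^{\,n-1-k}T^{k}=\lambda^{n}-T^{n}$ gives
$$
(\lambda-T)\,p(\lambda)\,\lambda^{n-1}\sum_{k=0}^{n-1}\lambda^{-k}T^{k}
=p(\lambda)\bigl(\lambda^{n}-T^{n}\bigr).
$$
Subtracting this from $\lambda^{n}p(\lambda)-T^{n}p(T)$, the two copies of $\lambda^{n}p(\lambda)$ cancel and what remains is exactly $T^{n}\bigl(p(\lambda)-p(T)\bigr)$.

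It then remains to produce a two-variable polynomial $Q$, independent of $n$ and of $T$, with $p(\lambda)-p(T)=(\lambda-T)\,Q(\lambda,T)$; granting this, multiplying on the left by $T^{n}$ yields precisely the stated formula. This is the classical fact that the polynomial $p(\lambda)-p(z)\in\C[\lambda,z]$ vanishes on the diagonal $\{\lambda=z\}$ and is therefore divisible by $\lambda-z$. Explicitly, writing $p(z)=\sum_{m=0}^{N}a_{m}z^{m}$, one has
$$
p(\lambda)-p(z)=\sum_{m=1}^{N}a_{m}(\lambda^{m}-z^{m})
=(\lambda-z)\sum_{m=1}^{N}a_{m}\sum_{i=0}^{m-1}\lambda^{i}z^{\,m-1-i},
$$
so one may take $Q(\lambda,z)=\sum_{m=1}^{N}a_{m}\sum_{i=0}^{m-1}\lambda^{i}z^{\,m-1-i}$, a polynomial of degree at most $N-1$ in each variable. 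Substituting $z=T$ finishes the argument.

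There is no genuine obstacle here; the only point meriting a word of care is that the right-hand side of the claimed identity is in fact polynomial in $\lambda$ — the negative powers $\lambda^{-k}$ are absorbed by the factor $\lambda^{n-1}$ — so the hypothesis $\lambda\in\C^{*}$ serves only to make the displayed expression meaningful term by term, and the identity persists at $\lambda=0$ by continuity.
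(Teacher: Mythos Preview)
Your proof is correct and follows essentially the same route as the paper: both arguments split the left-hand side as $(\lambda^n-T^n)\,p(\lambda)+T^n\bigl(p(\lambda)-p(T)\bigr)$, then use the telescoping identity $\lambda^n-T^n=(\lambda-T)\sum_{k=0}^{n-1}\lambda^{n-1-k}T^k$ and the divisibility of $p(\lambda)-p(z)$ by $\lambda-z$. Your version adds the explicit formula for $Q$ and the observation about $\lambda\in\C^*$, which the paper omits.
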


\begin{proof}
We have
\begin{align*}
\lambda^n\prod\limits_{j=1}^N(\xi_j-\lambda) - & T^n\prod\limits_{j=1}^N(\xi_j-T) \\ &= \lambda^n\prod\limits_{j=1}^N(\xi_j-\lambda) - T^n\prod\limits_{j=1}^N(\xi_j-\lambda) + T^n\prod\limits_{j=1}^N(\xi_j-\lambda) - 
T^n\prod\limits_{j=1}^N(\xi_j-T) \\
&= (\lambda^n-T^n)\prod\limits_{j=1}^N(\xi_j-\lambda) + T^n\biggl(\prod\limits_{j=1}^N(\xi_j-\lambda)-
\prod\limits_{j=1}^N(\xi_j-T)\biggr).
\end{align*}
Set 
$$
P(\lambda,z) = 
\prod\limits_{j=1}^N(\xi_j-\lambda)-
\prod\limits_{j=1}^N(\xi_j-z),
\qquad \lambda,z\in\C.
$$
Then we have $P(\lambda,z) = (\lambda-z) Q(\lambda,z)$ for 
some polynomial $Q$. Then 
$$
\prod\limits_{j=1}^N(\xi_j-\lambda)-\prod\limits_{j=1}^N(\xi_j-T) = (\lambda-T)Q(\lambda,T).
$$
We deduce that
\begin{align*}
\lambda^n\prod\limits_{j=1}^N(\xi_j-\lambda) - T^n\prod\limits_{j=1}^N(\xi_j-T) 
&= (\lambda^n-T^n)\prod\limits_{j=1}^N(\xi_j-\lambda) + T^n (\lambda-T)Q(\lambda,T)\\
&= (\lambda-T) \prod\limits_{j=1}^N(\xi_j-\lambda) \sum\limits_{k=0}^{n-1}\lambda^{n-1-k}T^k +T^n (\lambda-T)Q(\lambda,T),
\end{align*}
which yields the desired identity.
\end{proof}

\begin{proof}[Proof of Theorem \ref{THcaracterization}]

\

\underline{$\Rightarrow$ :} 
We assume that the operator $T$ is Ritt$_E$
and we will show (i) and (ii). The argument is an
adaptation of the one used for Ritt operators in \cite{V}.

Let $r\in (0,1)$ such that 
$T$ satisfies Lemma \ref{LEstolz}. Let $s\in (r,1)$, let $\alpha = \arcsin(s)$
and note that
$$
\vert 1 -\cos(\alpha)e^{\pm i\alpha}\vert =\sin(\alpha)=s.
$$
For convenience we assume that the sequence $(\xi_1,\xi_2,\ldots,\xi_N)$
is oriented counterclockwise on $\T$ and we set $\xi_{N+1}=\xi_1$.
Let $n\geq 1$. For any $j=1,\ldots,N$,
we define the following four paths (whose definitions
depend on $n$, although this is not reflected by the notation), see Figure 
\ref{fig:contour}:

\begin{itemize}
\item [-]
$\gamma_{j +}$ is the oriented segment $\bigl[\xi_j(1-\frac{\cos(\alpha)}{n}\,e^{-i\alpha}),
\xi_j(1- \cos(\alpha) e^{-i\alpha})\bigr]$;

\smallskip
\item [-]
$\gamma_{j -}$ is the oriented segment $\bigl[\xi_j(1-\cos(\alpha)e^{i\alpha}),
\xi_j(1- \frac{\cos(\alpha)}{n}\,e^{i\alpha})\bigr]$;

\smallskip
\item [-]
$\gamma_{j,j+1}$ is the simple path going from $\xi_j(1- \cos(\alpha) e^{-i\alpha})$
to $\xi_{j+1}(1-\cos(\alpha)e^{i\alpha})$ counterclockwise along the circle of centre $0$ and
radius $s$;

\smallskip
\item [-]
$\gamma_{j}$ is the simple path going from $\xi_j(1- \frac{\cos(\alpha)}{n}\,e^{i\alpha})$
to $\xi_j(1-\frac{\cos(\alpha)}{n}\,e^{-i\alpha})$ 
counterclockwise along the circle of centre $\xi_j$ and
radius $\frac{\cos(\alpha)}{n}$.
\end{itemize}

For $s$ close enough to $1$ and $n$ large enough, the concatenation of these $4N$ paths, that we denote by $\Gamma_n$,
is a Jordan 
contour enveloping $\sigma(T)$, and the support of $\Gamma_n$ is included in $E_s^c$.

\begin{figure}[!h]
    \includegraphics[scale=0.22]{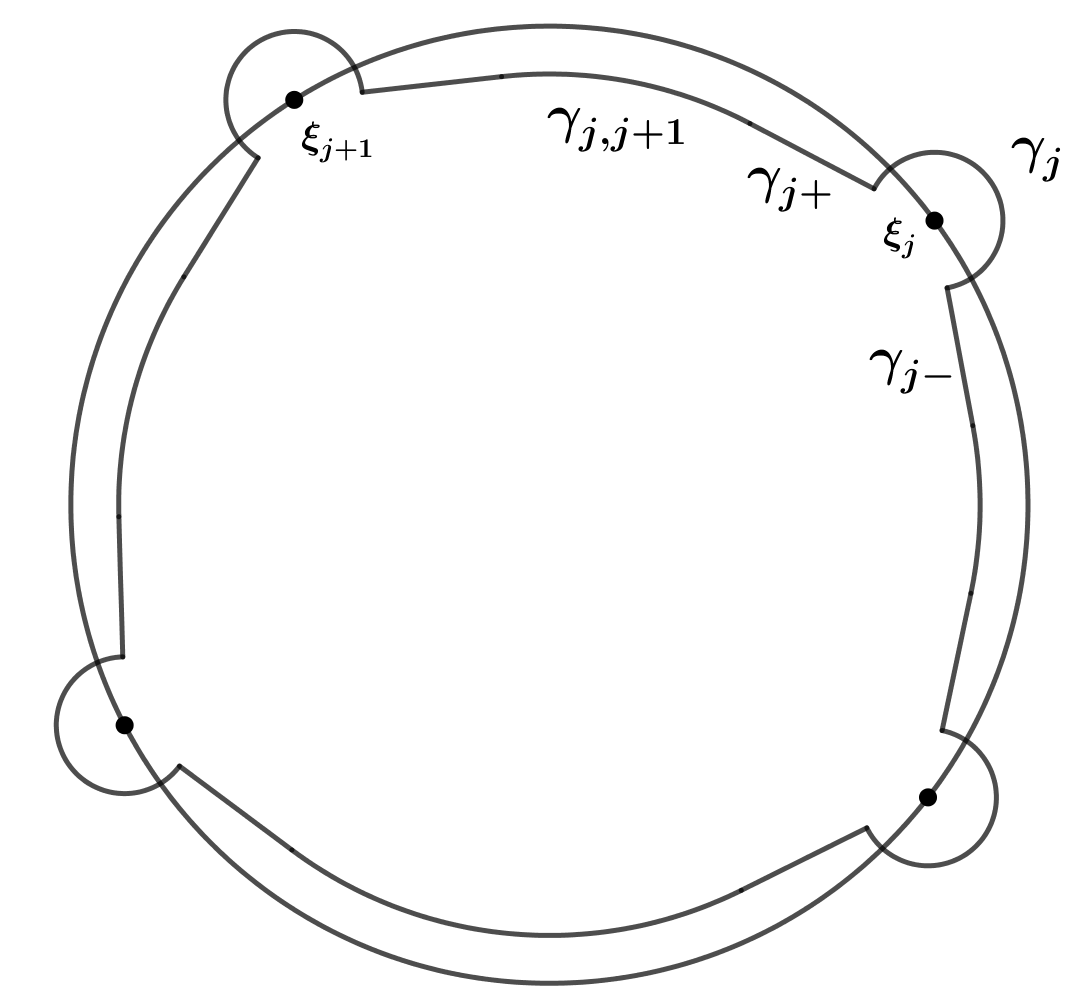}
    \caption{Integration contour}
    \label{fig:contour}
\end{figure}

We fix $s<1$ and $n_0\geq 1$ such that we are in the
above case for all $n\geq n_0$.
Then applying the Dunford-Riesz 
functional calculus, we have
$$
T^n = \,\frac{1}{2\pi i}\,
\int_{\Gamma_n} \lambda^n R(\lambda,T) \,d\lambda,
$$
for all $n\geq n_0$.
The upper bound in 
Lemma \ref{LEstolz}, (ii),
provides an estimate
$$
\norm{T^n} \leq\,\frac 1 {2\pi}\, 
\int_{\Gamma_n} \abs{ \lambda^n } \|R(\lambda,T)\| \,\vert d\lambda\vert
\,\leq \frac c {2\pi} \int_{\Gamma_n} \frac {\abs{ \lambda }^n}
{\prod\limits_{j = 1}^N \abs{ \lambda - \xi_j }} \,\vert d\lambda\vert\,.
$$
We will show that the integral in the 
right hand-side is uniformly bounded for $n\geq n_0$.

Fix $j_0 \in \{ 1,..,N\}$. For any $\lambda\in\gamma_{j_0,j_0+1}$,
we have $\vert\lambda\vert=s$ and hence $\vert\lambda-\xi_j\vert\geq 1-s$ for all
$j=1,\ldots,N$. 
Consequently,
$$
\int_{\gamma_{j_0,j_0+1}} 
\frac {\abs{ \lambda}^n} {\prod\limits_{j = 1}^N   
{\abs{ \lambda - \xi_j}}}\, \abs{d\lambda}\,
\leq \,\int_{\gamma_{j_0,j_0+1}} \frac {s^n} {(1-s)^N} \abs{d\lambda}
\,\longrightarrow\, 0,
$$
when $n\to\infty$.

Next observe that
$$
K=\sup\Biggl\{
\frac{1}{\displaystyle{\prod_{\substack{1 \leq j \leq N \\ j \neq j_0}} {\abs{\lambda - \xi_j}}}}\, 
:\, \vert\lambda-\xi_{j_0}\vert\leq\,\frac{\cos(\alpha)}{n}\Biggr\}\, <\infty.
$$
For all $ \lambda \in \gamma_{j_0}$, we have $\vert \lambda-\xi_{j_0}\vert =\,
\frac{\cos(\alpha)}{n}$ and hence $\vert\lambda\vert\leq 1+ \frac{1}{n}\,$. Consequently,	
\begin{align*}
\displaystyle {\int_{\gamma_{j_0}} 
\frac {\abs{\lambda}^n} {\prod\limits_{j = 1}^N  
{\abs{\lambda - \xi_j}}} \abs{d \lambda}}\, &=\,
\frac{1}{\cos(\alpha)}\,
\displaystyle {\int_{\gamma_{j_0}} \frac {n \abs{ \lambda}^n} 
{\displaystyle\prod_{\substack{1 \leq j \leq N \\ j \neq j_0}} {\abs{ \lambda - \xi_j}}} \abs{d \lambda}}  \\
&\leq \,\frac{K}{\cos(\alpha)}\,\displaystyle{ 
\int_{\gamma_{j_0}} {n \left( 1+ \frac 1 n \right)^n} \abs{d \lambda}} \\
& \leq \, 2\pi e K.
\end{align*}

Let us now focus on the integral over $\gamma_{j_0,+}$.
We may assume that $\xi_{j_0} = 1$ (otherwise, change
$T$ into  $\overline{\xi_{j_0}}T$.)
Observe that
$$
C=\sup\Biggl\{
\frac{1}{\displaystyle{\prod_{\substack{1 \leq j \leq N \\ j \neq j_0}} {\abs{\lambda - \xi_j}}}}\, 
:\, \lambda\in\bigl[1, 1 -\cos(\alpha)e^{-i\alpha}\bigr]
\Biggr\}\, <\infty.
$$
For all $t \in [0,\cos(\alpha)]$, 
we have $t^2 \leq t \cos(\alpha)$, hence
$$
\abs{ 1 - t e^{-i \alpha}}^2  = 1 + t^2 - 2t \cos(\alpha) \leq 1 - t\cos(\alpha).
$$
We derive the  estimate
$$
\int_{\gamma_{j_0,+}} \frac {\abs{\lambda}^n} {\prod\limits_{j = 1}^N {\abs{\lambda - \xi_j}}} \abs{d \lambda}
\leq C \displaystyle {\int_{\frac {\cos(\alpha)} n}^{\cos(\alpha)} \frac {(1-t\cos(\alpha))^{\frac n 2}} {t }\, d t }.
$$
Using the change of variable $t\to t {\cos(\alpha)}$ and the inequality $1-t\leq e^{-t}$, we have
\begin{align*}
\displaystyle{ \int_{\frac {\cos(\alpha)} n}^{\cos(\alpha)} \frac {(1-t\cos(\alpha))^{\frac n 2}} {t} \,d t }
&\leq  \displaystyle{ \int_{\frac {\cos^2(\alpha)} n}^{+\infty} \frac {e^{-{\frac {nt} 2}}} {t}\, d t}
\, =\, \displaystyle {\int_{\frac {\cos^2(\alpha)} 2}^{+\infty} \frac {e^{-t}} {t} \, d t}\,.
\end{align*}
This proves that the integrals 
$\displaystyle {\int_{\gamma_{j_0,+}} \frac {\abs{\lambda}^n} {\prod\limits_{j = 1}^N {\abs{\lambda - \xi_j}}} \abs{d \lambda}}$ are uniformly bounded for $n\geq n_0$. 
The same holds true if we replace $\gamma_{j_0,+}$ by $\gamma_{j_0,-}$.
Thus we have proved that 
$$
\sup_{n\geq n_0}\int_{\Gamma_n} \frac {\abs{ \lambda }^n} {\prod\limits_{j = 1}^N \abs{ \lambda - \xi_j }} \,\vert d\lambda\vert\,<\infty.
$$
This implies that  $\{T^n\, :\, n\geq 0\}$ is bounded, hence (i) is proved.

Let us now prove (ii), using $\Gamma_n$ as above. Applying
the Dunford-Riesz 
functional calculus again, we have
$$
\displaystyle{nT^{n-1} \prod\limits_{j = 1}^N (\xi_j - T) = 
\,\frac n {2\pi i}\,
\int_{\Gamma_n} \lambda^{n-1}\prod\limits_{j = 1}^N (\xi_j - \lambda) R(\lambda,T) \, d\lambda},
$$
for all $n\geq n_0$. This implies
$$
\Bignorm{nT^{n-1} \prod\limits_{j = 1}^N (\xi_j - T)}
\leq \frac {cn} {2\pi} 
\displaystyle{ \int_{\Gamma_n}\abs{ \lambda}^{n-1} 
\abs{\, d \lambda}}\,.
$$
Thus it suffices to show that the integrals in the right hand-side
are uniformly bounded for $n\geq n_0$.

As before we  fix $j_0\in\{1,\ldots,N\}$. 
For all $\lambda \in \gamma_{j_0,j_0+1}$,  
we have $\abs{ \lambda } = s$
hence
$$
n \int_{\gamma_{j_0,j_0+1}}\abs{ \lambda }^{n-1} 
\abs{d\lambda} \,\leq \,2\pi ns^{n-1}\,\longrightarrow\, 0,
$$
when $n\to\infty$.

Next for all $\lambda \in \gamma_{j_0}$, we have 
$\abs{ \lambda} \leq 1+\frac 1 n$ hence
$$
n  \int_{\gamma_{j_0}}\abs{ \lambda}^{n-1} \, \abs{d \lambda}\,
\leq n \Bigl(1 + \frac 1 n\Bigr)^{n-1} \bigl\vert \gamma_{j_0}\bigr\vert
\leq 2\pi e.
$$

Finally assume as before
that $\xi_{j_0}=1$. Then arguing as above we have
\begin{align*}
n \displaystyle{ \int_{\gamma_{j_0,+}}\abs{ \lambda}^{n-1} \abs{d \lambda}} 
&\leq n \displaystyle{ \int_{\frac {\cos(\alpha)} n}^{\cos(\alpha)} (1 - t\cos(\alpha))^{\frac {n-1} 2} d t}\\
&\leq \frac n {\cos(\alpha)} 
\int_{\frac {\cos^2(\alpha)} n}^{\infty} 
(1 - t)^{\frac {n-1} 2} \, d t  \\
&\leq  \frac n {\cos(\alpha)} \displaystyle{ 
\int_{\frac {\cos^2(\alpha)} n}^{\infty} {e^{\frac {-t(n-1)} 2}} dt}\\
&\leq \frac {4} {\cos(\alpha)} \displaystyle{ \int_{0}^{\infty} {e^{-t}} dt }
\,=\,\frac {4} {\cos(\alpha)}\,.
\end{align*}
We obtain that 
$n \displaystyle \int_{\gamma_{j_0,+}}\abs{ \lambda }^{n-1} 
\abs{d \lambda}$ has a uniform bound. 
The same holds true with $\gamma_{j_0,-}$ in
place of 
$\gamma_{j_0,+}$. These estimates imply (ii).

\smallskip
\underline{$\Leftarrow$ :} Assume (i) and (ii). The fact that 
$T$ is power bounded implies that $\sigma(T)\subset\overline{\D}$.
For any $\lambda \in \C$, with $1<\vert\lambda\vert<2$, we
multiply the identity of Lemma \ref{Seifert} by 
$R(\lambda,T)$ to obtain
$$
\lambda^n\prod\limits_{j=1}^N(\xi_j-\lambda) R(\lambda,T) =T^n\prod\limits_{j=1}^N(\xi_j-T) R(\lambda,T) + \prod\limits_{j=1}^N(\xi_j-\lambda) \lambda^{n-1} \sum\limits_{k=0}^{n-1}\lambda^{-k}T^k +T^n Q(\lambda,T).
$$
This implies
\begin{align*}
\abs{\lambda}^n\prod\limits_{j=1}^N \abs{\xi_j-\lambda} \|R(\lambda,T)\| &\leq
\Bignorm{T^n\prod\limits_{j=1}^N(\xi_j-T)}\norm{R(\lambda,T)}\\
& +\prod\limits_{j=1}^N \abs{\xi_j-\lambda} \abs{\lambda}^{n-1}
\sum\limits_{k=0}^{n-1}\abs{\lambda}^{-k}\norm{T^k}
+ \norm{Q(\lambda,T)}\norm{T^n}.
\end{align*}
By continuity of polynomials,
there exists a constant $c>0$ such that 
$$
\|Q(\lambda,T)\|\leq c,\qquad \lambda\in D(0,2).
$$
We derive the following estimate,
$$
\abs{\lambda}^n\prod\limits_{j=1}^N \abs{\xi_j-\lambda} \|R(\lambda,T)\| \leq \frac{c_1}{n} \|R(\lambda,T)\|+ c_0 \prod\limits_{j=1}^N \abs{\xi_j-\lambda} \abs{\lambda}^{n-1} \sum\limits_{k=0}^{n-1}\abs{\lambda}^{-k} + cc_0.
$$
Dividing both sides by 
$\abs{\lambda}^n$, and using the fact that
$\vert\lambda\vert>1$, we derive 
\begin{equation}\label{EQSIFERT}
\prod\limits_{j=1}^N \abs{\xi_j-\lambda} 
\|R(\lambda,T)\| \leq \frac{c_1}{n} \|R(\lambda,T)\|+ c_0\Bigl(n \prod\limits_{j=1}^N 
\abs{\xi_j-\lambda} + c\Bigr).
\end{equation}
Now for a given $\lambda\in D(0,2)\setminus \overline{\D}$,
we apply this estimate with $n-1$ equal to the integer
part of 
$\frac{2c_1}{\prod\limits_{j=1}^N\abs{\xi_j-\lambda}}$.
Thus 
$$
\frac{c_1}{n}\leq\,\frac12\, \prod\limits_{j=1}^N\abs{\xi_j-\lambda}
\qquad\hbox{and}\qquad
n \prod\limits_{j=1}^N 
\abs{\xi_j-\lambda}\leq 2c_1 +
\prod\limits_{j=1}^N 
\abs{\xi_j-\lambda}\,\leq 2c_1 + 3^N.
$$
Then (\ref{EQSIFERT}) implies that 
$$
\prod\limits_{j=1}^N \abs{\xi_j-\lambda} \|R(\lambda,T)\| \leq
\frac12\, \prod\limits_{j=1}^N \abs{\xi_j-\lambda} \|R(\lambda,T)\| \leq
+ c_0\Bigl(n \prod\limits_{j=1}^N 
\abs{\xi_j-\lambda} + c\Bigr),
$$
and hence
$$
\prod\limits_{j=1}^N \abs{\xi_j-\lambda} \|R(\lambda,T)\| \leq 2 c_0\Bigl(n \prod\limits_{j=1}^N 
\abs{\xi_j-\lambda} + c\Bigr)\leq
2c_0\bigl(2c_1+3^N+c\bigr).
$$
This proves that $T$ is Ritt$_E$.
\end{proof}

\smallskip
\section{Polygonal functional calculus and $R$-Ritt$_E$ operators}
	
In this section we define polygonal functional calculus
and $H^\infty(E_s)$ functional calculus, 
as well as $R$-Ritt$_E$ operators. Our main results, Theorem \ref{thPolyHI}
and Corollary \ref{Main1}, assert that for any  $R$-Ritt$_E$ operator,
polynomial boundedness implies (and hence is equivalent to) either
bounded polygonal functional calculus or a bounded
$H^\infty(E_s)$ functional calculus. The proofs rely on a 
decomposition principle that 
we establish in Subsection \ref{Unity}.

\smallskip
\subsection{Functional calculi}
For any non empty open set $\Omega\subset\C$, we let 
$H^\infty(\Omega)$ denote the Banach algebra of all
bounded holomorphic functions $\phi\colon \Omega\to\C$,
equipped with
$$
\norm{\phi}_{\infty,\Omega} = 
\sup\bigl\{\vert\phi(\lambda)\vert\, :\, \lambda\in \Omega\bigr\}.
$$
Let $\P$ denote the algebra of all one variable complex polynomials.

Let $X$ be a Banach space and let $T\in B(X)$ such that
$\sigma(T)\subset\overline{\D}$. 
Following classical terminology, we say that $T$ is polynomially bounded if there exists
a constant $K\geq 1$ such that 
\begin{equation}\label{PB}
\norm{\phi(T)}\leq K \norm{\phi}_{\infty,\D},\qquad \phi\in\P.
\end{equation}
We recall the obvious fact that any polynomially bounded
operator is power bounded.

In the sequel, the name ``polygon" is reserved for open convex polygons, that is,
bounded finite intersections of open half-planes.

\begin{definition}\label{Polygonal}
We say that $T$ admits a bounded
polygonal functional calculus if there exist a polygon $\Delta\subset\D$ 
such that $\sigma(T)\subset\overline{\Delta}$, and a 
constant $K\geq 1$ such that 
\begin{equation}\label{Polygonal1}
\norm{\phi(T)}\leq K\norm{\phi}_{\infty,\Delta},\qquad \phi\in\P.
\end{equation}
\end{definition}

It is plain that $T$ is polynomially bounded if it admits a 
bounded polygonal functional calculus.
It essentially follows from \cite{LLM} that the converse is wrong (see Remark 
\ref{Wrong}).

\begin{remark}\label{PolygonToRitt}
Assume that $T$ satisfies Definition \ref{Polygonal}
for some polygon $\Delta\subset\D$. 
Let $E=\overline{\Delta}\cap \T$. 
This is a finite set, containing 
the peripheral spectrum $\sigma(T)\cap\T$.

Assume that $\sigma(T)\cap\T\not=\emptyset$.
Let 
$$
{\mathcal D}_{1,2}=\bigl\{ z\in\C\, :\, 1<\vert z\vert <2\bigr\}
$$
and define
$$
h(z,\lambda) = (z-\lambda)^{-1}\prod\limits_{\xi\in E}(\xi - z),\qquad
z\in {\mathcal D}_{1,2},\ \lambda\in\D.
$$
For any $\xi\in E$, 
the quotient $\frac{\xi- z}{z-\lambda}$ is bounded 
when $z,\lambda$ are close to $\xi$, $z\in {\mathcal D}_{1,2}$
and $\lambda\in\Delta$.

\begin{figure}[!h]
\includegraphics[scale=0.40]{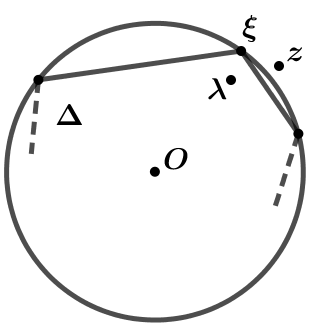}
 \caption{Positions of $\xi,z,\lambda$}
\label{fig:positions}
\end{figure}

This implies that
$C= \sup\bigl\{\norm{h(z,\,\cdotp)}_{\infty,\Delta}\, 
:\, z\in {\mathcal D}_{1,2}\bigr\}\, <\infty$.
Then we derive from (\ref{Polygonal1}) that  
$h(z,T) = R(z,T)\prod\limits_{\xi\in E}(\xi - z)$ satisfies
$\norm{h(z,T)}\leq KC$ for all $z\in {\mathcal D}_{1,2}$.
This implies that
$T$ is Ritt$_E$. Equivalently, see Remark \ref{RKradius}, (b),
$T$ satisfies a resolvent estimate
\begin{equation}\label{ResEst1}
\norm{R(z,T)} \lesssim \max\bigl\{
\vert z -\xi\vert^{-1}\, :\,\xi\in E\bigr\},
\qquad z\in\overline{\D}^c.
\end{equation}
Since $R(z,T)$ is bounded in the neighborhood
of any $\xi\in E$ not belonging to $\sigma(T)$, 
this implies an estimate (\ref{ResEst}) as well.

Of course if $\sigma(T)\cap\T=\emptyset$, then $R(z,T)$ is bounded on $\overline{\D}^c$.
\end{remark}

\bigskip
Let $E=\{\xi_1,\ldots,\xi_N\}\subset\T$ as in Section 2. 
Assume that $T$ is a Ritt$_E$ operator. In the sequel 
we will say that $T$ is a Ritt$_E$ operator of type 
$r\in(0,1)$ provided that it satisfies the conclusion
of Lemma \ref{LEstolz}.

For any $s\in(0,1)$, we let
$H_0^\infty(E_s)$ be the subspace of all $\phi\in H^\infty(E_s)$ for which
there exist positive real numbers $c,s_1,...,s_n >0$ such that : 
\begin{equation}\label{H0}
\abs{\phi(\lambda)} \leq c \prod\limits_{j=1}^N \abs{\xi_j-\lambda}^{s_i},
\end{equation}
for all $\lambda\in E_s$.

Assume that $T$ is a Ritt$_E$ operator of type $r\in(0,1)$ and let $s\in(r,1)$.
For any $\phi\in H_0^\infty(E_s)$, we set
$$
\phi(T) = \frac 1 {2\pi i}\int_{\partial E_u} \phi(\lambda) R(\lambda,T)\, d\lambda,
$$
where $u\in(r,s)$ and the boundary $\partial E_u$ of $E_u$ is oriented counterclockwise.
According to Remark \ref{Warning}, we have $\partial E_u\setminus E\subset E_s$ hence 
integration of $\phi(\lambda) R(\lambda,T)$ along $\partial E_u$ makes sense. Further
the integral is absolutely convergent, hence well-defined. Indeed this
follows from (\ref{H0}) and the estimate (ii) in Lemma \ref{LEstolz}. Furthermore 
by Cauchy's theorem,
the value of this integral does not depend on the choice of $u$. It is easy to check that
$H_0^\infty(E_s)$ is a subalgebra of $H^\infty(E_s)$ and 
that the mapping
$$
H_0^\infty(E_s)\longrightarrow B(X),\qquad 
\phi\mapsto \phi(T),
$$
is a homomorphism.

\begin{definition}\label{DEhinfty}
Let $T$ be a Ritt$_E$ operator of type $r\in(0,1)$ and let $s\in(r,1)$
We say that $T$ admits a bounded $H^\infty(E_s)$ functional 
calculus if there exists a constant $K\geq 1$
such that 
\begin{equation}\label{HFC}
\|\phi(T)\| \leq K \|\phi\|_{\infty,E_s},
\qquad\phi \in H_0^\infty(E_s).
\end{equation}
\end{definition}

The above definitions are natural extensions of the ones considered in
\cite{LM1} for Ritt operators. In this spirit, the following 
is an analogue of \cite[Proposition 2.5]{LM1}.

\begin{proposition}\label{PRhinftypoly}
Let $T$ be a  Ritt$_E$ operator of type $r \in (0,1)$ and let 
$s\in(r,1)$. Then $T$ has a 
bounded $H^\infty(E_s)$ functional calculus
if and only if there exists a constant $K \geq 1$ such that
$$
\|\phi(T)\| \leq K \|\phi\|_{\infty,E_s},\qquad \phi\in \P.
$$
\end{proposition}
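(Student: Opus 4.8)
The plan is to mimic the classical argument for Ritt operators from \cite[Proposition 2.5]{LM1}. One direction is trivial: if $T$ admits a bounded $H^\infty(E_s)$ functional calculus, then since every polynomial $\phi$ with $\phi(\xi_j)=0$ for all $j$ lies in $H_0^\infty(E_s)$ (it satisfies \eqref{H0} with exponents $s_i=1$), we immediately get $\norm{\phi(T)}\leq K\norm{\phi}_{\infty,E_s}$ for all such $\phi$; and a general polynomial $\phi$ can be reduced to this case by writing $\phi=\psi\cdot\prod_{j=1}^N(\xi_j-\lambda)+\rho$ with $\rho$ of degree $<N$ interpolating $\phi$ at the $\xi_j$ — but in fact we want the reverse reduction, so I would instead argue the genuinely substantive direction and let the ``only if'' follow by the same density/approximation scheme run backwards.

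For the main direction, assume $\norm{\phi(T)}\leq K\norm{\phi}_{\infty,E_s}$ for all polynomials $\phi$. Given $\phi\in H_0^\infty(E_s)$, the strategy is to approximate $\phi$ by polynomials in the relevant sense. The standard device is to use the functions $\phi_t(\lambda)=\phi(\lambda)\,\bigl(\tfrac{r_t}{r_t}\bigr)$-type dilations — more precisely, fix $s'\in(r,s)$ and consider, for $0<t<1$, the dilated function $\phi(t\lambda+(1-t)c_j)$ near each vertex, or more cleanly the single-parameter family obtained by composing $\phi$ with the retraction of $E_s$ onto $E_{s'}$. Since $E_s$ is star-shaped with respect to $0$, one can use $\phi_t(\lambda)=\phi(t\lambda)$ for $t\uparrow 1$: these are bounded holomorphic on a neighborhood of $\overline{E_{s'}}$, hence uniformly approximable by polynomials there (Runge/Mergelyan, or a Cauchy-kernel Riemann-sum argument), with $\norm{\phi_t}_{\infty,E_s}\leq\norm{\phi}_{\infty,E_s}$. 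One shows $\phi_t(T)\to\phi(T)$ strongly (or in norm) using the defining contour integral over $\partial E_u$ together with the decay bound \eqref{H0} and Lemma~\ref{LEstolz}(ii) — dominated convergence applies because the integrand is dominated by $c\,\prod_j\abs{\xi_j-\lambda}^{s_i}\norm{R(\lambda,T)}\lesssim c\,\prod_j\abs{\xi_j-\lambda}^{s_i-1}$, which is integrable on $\partial E_u$. Meanwhile each $\phi_t$ is a uniform limit on $\overline{E_u}$ of polynomials $p_n$, and $p_n(T)=\frac{1}{2\pi i}\int_{\partial E_u}p_n(\lambda)R(\lambda,T)\,d\lambda$ converges to $\phi_t(T)$ while the polynomial estimate gives $\norm{p_n(T)}\leq K\norm{p_n}_{\infty,E_s}\to K\norm{\phi_t}_{\infty,E_s}$. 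Chaining the two limits yields $\norm{\phi(T)}\leq K\norm{\phi}_{\infty,E_s}$.

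The main obstacle is the two-step approximation: one needs that polynomials are dense in $H^\infty$ of a \emph{neighborhood} of $\overline{E_{s'}}$ in a topology strong enough to pass inside the contour integral, and that $\phi_t\to\phi$ in the operator sense. The first is handled by the geometry of $E_s$ (it is a finite union of a disc and triangles, hence $\overline{E_{s'}}$ has connected complement, so Runge's theorem applies), together with the fact that for $p_n\to\phi_t$ uniformly on a neighborhood of $\overline{E_u}$ the corresponding contour integrals converge in $B(X)$. The second is the dominated-convergence argument above; the only subtlety is checking that the dilates $\phi_t$ still satisfy a uniform bound of type \eqref{H0} near the vertices $\xi_j$ (they do, since $\prod_j\abs{\xi_j-t\lambda}^{s_i}\lesssim\prod_j\abs{\xi_j-\lambda}^{s_i}$ uniformly in $t$ on $E_s$), so the domination is uniform in $t$. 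Once these two convergences are in hand, the inequality is immediate, and the ``only if'' direction follows because any polynomial is, after subtracting the interpolating polynomial $\rho$ of degree $<N$ at the points $\xi_1,\dots,\xi_N$, of the form $\psi\cdot\prod_{j}(\xi_j-\lambda)\in H_0^\infty(E_s)$, and $\rho(T)$ is controlled by power-boundedness of $T$ together with $\norm{\rho}_{\infty,E_s}\lesssim\sum\abs{\phi(\xi_j)}\lesssim\norm{\phi}_{\infty,E_s}$.
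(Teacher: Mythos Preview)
Your treatment of the ``only if'' direction (the last paragraph) is correct and is exactly what the paper does: subtract the Lagrange interpolant $\rho=\sum_j\phi(\xi_j)L_j$, observe that $\phi-\rho\in H_0^\infty(E_s)$, and control $\rho(T)$ directly.

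The ``if'' direction, however, has a genuine error. You claim that the dilates $\phi_t(\lambda)=\phi(t\lambda)$ satisfy a bound of type \eqref{H0} uniformly in $t$, because
\[
\prod_j\abs{\xi_j-t\lambda}^{s_j}\ \lesssim\ \prod_j\abs{\xi_j-\lambda}^{s_j}
\quad\text{uniformly for }t\in(0,1),\ \lambda\in E_s.
\]
This inequality is false. Take $\lambda=\xi_k$: the right-hand side vanishes while the left-hand side equals $(1-t)^{s_k}\prod_{j\neq k}\abs{\xi_j-t\xi_k}^{s_j}>0$. In fact $\phi_t(\xi_k)=\phi(t\xi_k)$, which is generically nonzero, so $\phi_t\notin H_0^\infty(E_s)$ at all. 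Consequently your dominated-convergence argument for $\phi_t(T)\to\phi(T)$ collapses: on $\partial E_u$ the integrand $\phi_t(\lambda)R(\lambda,T)$ behaves like $\abs{\xi_k-\lambda}^{-1}$ near each vertex $\xi_k$, which is not integrable, so the integral $\int_{\partial E_u}\phi_t(\lambda)R(\lambda,T)\,d\lambda$ does not even converge.

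This is precisely the obstruction that distinguishes the multi-vertex case from the classical Ritt case: when $N=1$ one uses the dilation $\lambda\mapsto 1-t^{-1}(1-\lambda)$ \emph{centred at the vertex}, which preserves the vanishing there; with several vertices no single affine map fixes them all. The paper refers to \cite[Proposition~2.5]{LM1} for this direction, and the argument there does not transfer via the naive dilation $\lambda\mapsto t\lambda$. One repair is to combine your dilation with the Lagrange trick you already used: set $\widetilde\phi_t=\phi_t-\rho_t$ where $\rho_t$ interpolates $\phi_t$ at the $\xi_j$; then $\widetilde\phi_t$ does vanish at each vertex, $\rho_t(T)\to 0$ since $\phi(t\xi_j)\to\phi(\xi_j)=0$, and one can hope to run the approximation on $\widetilde\phi_t$ --- but even then the uniform-in-$t$ integrability near the vertices needs a separate argument, since $\widetilde\phi_t$ only vanishes to first order and you have no uniform control on its derivative. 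As written, the convergence step is a gap.
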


\begin{proof}
The proof of the `if part' is identical to that of 
\cite[Proposition 2.5]{LM1}
so we skip it.

For the `only if' part, assume (\ref{HFC}). Consider (Lagrange)
polynomials $L_1,\ldots,L_N\in\P$ satisfying $L_i(\xi_j)=\delta_{i,j}$, 
for all $1\leq i,j\leq N$. Let $\psi\in \P$ and write $\psi=\psi_0 +\psi_1$, with
$$
\psi_0 = \sum\limits_{j=1}^N \psi(\xi_j)L_j
\qquad\hbox{and}\qquad
\psi_1 = \psi - \sum\limits_{j = 1}^N \psi(\xi_j)L_j.
$$
Then $\psi_1(\xi_j)=0$ for all $j=1,\ldots,N$, hence $\psi_1\in H^\infty_0(E_s)$.  Writing 
$\psi(T)=\psi_0(T) +\psi_1(T)$, and using $\psi_0(T)= \sum_j \psi(\xi_j)L_j(T)$, we infer
$$
\|\psi(T)\| \leq \|\psi_0(T)\| + \|\psi_1(T)\|\leq
\sum\limits_{j=1}^N\abs{\psi(\xi_j)}\|L_j(T)\|+ K\|\psi_1\|_{\infty,E_s}.
$$
Further, $\|\psi_1\|_{\infty,E_s} \leq \|\psi\|_{\infty,E_s} + \|\psi_0\|_{\infty,E_s}$ and
$$
\|\psi_0\|_{\infty,E_s}
\leq \sum\limits_{j=1}^N\abs{\psi(\xi_j)} \|L_j\|_{\infty,E_s}
\leq \Bigl(\sum\limits_{j=1}^N \|L_j\|_{\infty,E_s}\Bigr)
\|\psi\|_{\infty,E_s}.
$$
We derive that
$$
\|\psi(T)\| \leq\biggl(
K\Bigl(1+ \sum\limits_{j=1}^N \|L_j\|_{\infty,E_s}\Bigr) +\sum\limits_{j=1}^N\norm{L_j(T)}\biggr)
\norm{\psi}_{\infty, E_s},
$$
which proves the result.
\end{proof}

Definitions \ref{Polygonal} and \ref{DEhinfty} are connected by the following 
property. See also Remark \ref{Sharp} for more on this.

\begin{proposition}\label{H-P}
For any $T\in B(X)$, the following assertions are equivalent.
\begin{itemize}
\item [(i)] The operator $T$ admits a bounded polygonal functional 
calculus.
\item[(ii)] There exist a finite subset $E\subset \T$ and 
$s\in(0,1)$ such that 
$T$ is Ritt$_E$ and $T$ admits a bounded $H^\infty(E_s)$ functional calculus.
\end{itemize}
\end{proposition}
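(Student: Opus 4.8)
The plan is to prove the two implications separately. In both directions the functional-calculus content is absorbed entirely by Remark~\ref{PolygonToRitt} and Proposition~\ref{PRhinftypoly}, so that what remains is a planar-convexity comparison of the sup-norms $\norm{\cdot}_{\infty,\Delta}$ and $\norm{\cdot}_{\infty,E_s}$, i.e.\ of the inclusions between a polygon, a generalized Stolz domain $E_s$, and the disc $\D$.

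For the implication (i)$\Rightarrow$(ii), assume $T$ has a bounded polygonal functional calculus realized by a polygon $\Delta\subset\D$ with $\sigma(T)\subset\overline\Delta$. Set $E=\overline\Delta\cap\T$; this is a finite set, and when it is nonempty Remark~\ref{PolygonToRitt} shows that $T$ is Ritt$_E$, hence Ritt$_E$ of some type $r_0\in(0,1)$ by Lemma~\ref{LEstolz} (the case $\overline\Delta\cap\T=\emptyset$ is degenerate: one adjoins an arbitrary point, say $E=\{1\}$, and uses that $R(\cdot,T)$ is bounded on $1<|z|<2$). The geometric observation I would record is that $\Delta\subset E_\rho$ whenever $\rho\in(0,1)$ strictly exceeds the moduli of the finitely many vertices of $\Delta$ lying in $\D$: indeed each $\xi\in E$ is an extreme point of $\overline\D$ lying in $\overline\Delta$, hence a vertex of $\Delta$, so $\overline\Delta=\mathrm{conv}(E\cup V)$ with $V\subset D(0,\rho)$, and passing to interiors gives $\Delta\subset\mathrm{int}\,\mathrm{conv}(E\cup D(0,\rho))=E_\rho$. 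Enlarging $r_0$ if necessary so that $r_0\ge\rho$, pick any $s\in(r_0,1)$; then $T$ is Ritt$_E$ of type $r_0<s$, and for every $\phi\in\P$ the polygonal estimate together with $\Delta\subset E_\rho\subset E_s$ gives $\norm{\phi(T)}\leq K\norm{\phi}_{\infty,\Delta}\leq K\norm{\phi}_{\infty,E_s}$. Proposition~\ref{PRhinftypoly} then upgrades this to a bounded $H^\infty(E_s)$ functional calculus.

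For the implication (ii)$\Rightarrow$(i), assume $T$ is Ritt$_E$ of type $r$ and admits a bounded $H^\infty(E_s)$ functional calculus for some $s\in(r,1)$. Proposition~\ref{PRhinftypoly} gives $\norm{\phi(T)}\leq K\norm{\phi}_{\infty,E_s}$ for all $\phi\in\P$, so it suffices to produce a polygon $\Delta$ with $\sigma(T)\subset\overline\Delta$, $\Delta\subset\D$ and $E_s\subset\Delta$: the last inclusion forces $\norm{\phi}_{\infty,E_s}\leq\norm{\phi}_{\infty,\Delta}$ and hence the polygonal estimate. I would take $\Delta=\mathrm{int}\,\mathrm{conv}(E\cup P)$, where $P$ is a convex polygon with $\overline{D(0,s)}\subset P$ and $\overline P\subset\D$ — concretely a regular $m$-gon circumscribing $D(0,\tfrac{1+s}{2})$ with $m$ large. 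Since $D(0,s)\subset P$ one gets $\mathrm{conv}(E\cup D(0,s))\subset\mathrm{conv}(E\cup P)$, hence $E_s\subset\Delta$, and therefore $\sigma(T)\subset\overline{E_r}\subset\overline{E_s}\subset\overline\Delta$. Since $\overline P\subset\D$, any point of $\mathrm{conv}(E\cup\overline P)\subset\overline\D$ lying on $\T$ is an extreme point of $\overline\D$ belonging to this convex set, hence extreme in it, hence (by Milman's theorem) an element of $E\cup\overline P$ and thus of $E$; consequently $\overline\Delta\cap\T\subset E$, and because a point of $E$ is extreme in $\overline\D$ it cannot be interior to $\overline\Delta$, so $\Delta\cap\T=\emptyset$, i.e.\ $\Delta\subset\D$.

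I expect the one genuinely delicate point to be the open/closed bookkeeping near the vertices: the domain $E_s$, every admissible polygon $\Delta$, and $\overline\D$ all share the boundary points $\xi_1,\dots,\xi_N$, so one must compare the open sets $E_s$ and $\Delta$ throughout — never their closures — and must check at each step that the $\xi_j$ are genuine vertices of the polygons in play. Granting that, both directions reduce to the elementary inclusions above.
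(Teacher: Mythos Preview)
Your proof is correct and follows essentially the same route as the paper: use Remark~\ref{PolygonToRitt} and Proposition~\ref{PRhinftypoly} to reduce both directions to the convex-geometry inclusions $\Delta\subset E_s$ (for (i)$\Rightarrow$(ii)) and $E_s\subset\Delta\subset\D$ (for (ii)$\Rightarrow$(i)). The paper merely asserts these inclusions as evident, whereas you spell them out in detail; your Milman-type argument for $\Delta\subset\D$ is more than necessary (any open subset of $\overline\D$ lies in $\D$), but nothing is wrong.
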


\begin{proof}
Assume (i), that is,
$T$ satisfies Definition \ref{Polygonal}
for some polygon $\Delta\subset\D$. We may assume that 
$E=\overline{\Delta}\cap \T$ is non empty. Then $T$
is Ritt$_E$ by Remark \ref{PolygonToRitt}.
Furthermore for $s\in(0,1)$ large enough, we have
$\Delta\subset E_s$. Hence by Proposition \ref{PRhinftypoly},
the estimate (\ref{Polygonal1}) implies that $T$
admits a bounded $H^\infty(E_s)$ functional calculus.

Assume (ii). It is plain that there exists 
a polygon $\Delta\subset\D$ such that
$$
E_s\subset \Delta\subset\D.
$$
Hence applying Proposition \ref{PRhinftypoly}, we obtain that $T$
satisfies an estimate (\ref{Polygonal1}).
\end{proof}

\smallskip
\subsection{Decomposition of unity}\label{Unity}
We fix $E=\{\xi_1,\ldots,\xi_N\}\subset\T$ as in Section 2. 
We let $H^\infty_{0,E}(\D)$ be the space of all $\phi\in H^\infty(\D)$ for which
there exist positive real numbers $c,s_1,...,s_n >0$ such that (\ref{H0}) 
holds true 
for all $\lambda\in \D$.

\begin{proposition}\label{DecPri}
There exist three sequences
$(\theta_i)_{i\geq 1}$, $(\phi_i)_{i\geq 1}$ and $(\psi_i)_{i\geq 1}$ of 
$H_{0,E}^\infty(\D)$ such that:
\begin{align*}
&(i)\ \sup\limits_{z \in \D} \sum\limits_{i = 1}^\infty\abs{\phi_i(z)} <\infty 
\qquad\hbox{and}\qquad
\sup\limits_{z \in \D} \sum\limits_{i = 1}^\infty\abs{\psi_i(z)} <\infty;\qquad\qquad\\
&(ii) \ \sup\limits_{i \geq 1}\, \sup\limits_{z \in \D} 
\abs{\theta_i(z)} <\infty;\qquad\qquad\\
&(iii))\ \forall\, r\in (0,1), 
\quad \sup\limits_{i\geq 1} \int_{\partial E_{r}} \frac {\abs{\theta_i(z)}}
{\prod\limits_{k=1}^N \vert\xi_k -z\vert} \,\abs{dz}< \infty;
\qquad\qquad\\
&(iv)\ \forall\, z \in \D,\quad 1 = \sum\limits_{i = 1}^\infty \theta_i(z)\phi_i(z)\psi_i(z).\qquad\qquad
\end{align*}
\end{proposition}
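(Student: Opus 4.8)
The idea is to build the decomposition one peripheral point at a time and then take a product. For a single point, say $\xi_j$, the classical Ritt situation provides a Franks–McIntosh–type dyadic decomposition of unity for the sectorial operator $A_j=I_X-\overline{\xi_j}T$: there is a sequence of functions supported near $\xi_j$, each vanishing to positive order at $\xi_j$, that sum to $1$ on $\mathbb D$ with the right square-function / integral bounds. Concretely, for each $j$ I would fix a holomorphic function $\eta$ on $\mathbb D$ that equals $1$ near $\xi_j$, vanishes near all the other $\xi_k$, and is such that $\eta$ and $1-\eta$ both lie in $H^\infty_{0,E}(\mathbb D)$ — this is possible because $E$ is finite, so one can use finite products of factors $(\xi_k-\lambda)$ against a suitable bounded holomorphic correction. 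Multiplying a genuine single-point dyadic partition of unity (in the variable $1-\overline{\xi_j}\lambda$, localized by $\eta$) by these cutoffs gives, for each $j$, a family $(\rho^{(j)}_m)_{m\ge 0}$ in $H^\infty_{0,E}(\mathbb D)$ with $\sum_m \rho^{(j)}_m = 1$ on $\mathbb D$, with $\sup_z\sum_m|\rho^{(j)}_m(z)|<\infty$, with $\sup_m\|\rho^{(j)}_m\|_{\infty,\mathbb D}<\infty$, and with each $\rho^{(j)}_m$ vanishing at the scale $2^{-m}$ near $\xi_j$ (and identically vanishing near the other $\xi_k$).

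The next step is to combine the $N$ single-point partitions by taking the product:
\[
1=\prod_{j=1}^N\Bigl(\sum_{m\ge 0}\rho^{(j)}_m\Bigr)
=\sum_{(m_1,\dots,m_N)}\ \prod_{j=1}^N\rho^{(j)}_{m_j},
\]
and then re-index the countable set of multi-indices $(m_1,\dots,m_N)$ as a single sequence $i\ge 1$. Writing $\Theta_i=\prod_j\rho^{(j)}_{m_j}$ for the $i$-th term, this is an absolutely convergent decomposition of unity on $\mathbb D$ by a family in $H^\infty_{0,E}(\mathbb D)$ (the algebra is closed under products, and each $\Theta_i$ vanishes to positive order at every $\xi_k$ since at least the factor with $j=k$ does). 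The summability $\sup_z\sum_i|\Theta_i(z)|\le\prod_j\sup_z\sum_{m}|\rho^{(j)}_m(z)|<\infty$ follows from the single-point bounds, and each $\Theta_i$ is uniformly bounded.

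It remains to factor each $\Theta_i$ as $\theta_i\phi_i\psi_i$ so that (i)–(iv) hold simultaneously. This is where one has to be a little careful, and I expect it to be the main technical point. The natural choice is: let $\psi_i$ absorb, from each factor $\rho^{(j)}_{m_j}$, a piece of size $|\xi_j-\lambda|^{\epsilon_j}$ (a fixed small power), let $\phi_i$ absorb another such piece, and let $\theta_i$ be the remainder; that is, write $\rho^{(j)}_{m_j}=a^{(j)}_{m_j}\,b^{(j)}_{m_j}\,c^{(j)}_{m_j}$ with $b,c$ carrying the positive-order vanishing and $a$ carrying the main oscillation, then set $\theta_i=\prod_j a^{(j)}_{m_j}$, $\phi_i=\prod_j b^{(j)}_{m_j}$, $\psi_i=\prod_j c^{(j)}_{m_j}$. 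Conditions (ii) and (iv) are then immediate. For (i), the two square-function-type estimates $\sup_z\sum_i|\phi_i(z)|<\infty$ and $\sup_z\sum_i|\psi_i(z)|<\infty$ reduce, via the product structure, to the corresponding single-point estimates, which are exactly the content of the dyadic decomposition for a sectorial operator of type $<\pi/2$. For (iii), one uses that on $\partial E_r$ the quantity $\prod_k|\xi_k-z|^{-1}$ is comparable, near $\xi_j$, to a power of the distance to $\xi_j$, while $\theta_i$ — containing the factor $a^{(j)}_{m_j}$ supported in an annulus of scale $2^{-m_j}$ about $\xi_j$ — contributes a contour integral over an arc of length $\sim 2^{-m_j}$ on which the integrand is $\lesssim 2^{m_j}$; these balance to a uniform bound, exactly as in the one-point Ritt/sectorial case, and the other directions $k\ne j$ contribute only bounded factors because $\theta_i$ vanishes there. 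The hard part is bookkeeping: making one common family of elementary factors $(\xi_k-\lambda)^{\epsilon}$ serve all three roles at once without destroying either the summability in (i) or the contour bound in (iii); the key observation that makes it work is that all the relevant exponents can be chosen to be fixed small positive constants (independent of $i$), so the single-point estimates are uniform and the product over the fixed finite set $\{1,\dots,N\}$ preserves every bound.
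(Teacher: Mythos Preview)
Your overall architecture --- pull back a Franks--McIntosh decomposition via $z\mapsto 1-\overline{\xi_j}z$ for each $j$, multiply the $N$ resulting identities together, and reindex over multi-indices $\iota\in\N^N$ --- is exactly what the paper does. The gap is in your Step~1, the cutoff construction, which is both unnecessary and broken; the same ``support'' heuristic then contaminates your sketch of (iii).

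You ask for a holomorphic $\eta$ on $\D$ that equals $1$ near $\xi_j$, vanishes near the other $\xi_k$, and with both $\eta$ and $1-\eta$ in $H^\infty_{0,E}(\D)$. This is impossible: membership in $H^\infty_{0,E}(\D)$ requires decay at \emph{every} point of $E$, so $\eta\to 1$ as $z\to\xi_j$ already rules $\eta$ out; and a nonzero holomorphic function on the connected domain $\D$ never vanishes identically on an open subset, so ``supported near $\xi_j$'' and ``identically vanishing near the other $\xi_k$'' make no sense for the $\rho^{(j)}_m$. More concretely, if you build $\rho^{(j)}_m=\eta\cdot\sigma^{(j)}_m$ from a genuine partition $\sum_m\sigma^{(j)}_m=1$, then $\sum_m\rho^{(j)}_m=\eta\neq 1$. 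The paper avoids cutoffs altogether: it starts from the \emph{already three-fold factored} sectorial identity $1=\sum_i\Theta_i\Phi_i\Psi_i$ on $\Sigma_{\frac{\pi}{2}}$ (from \cite{ArLM}), pulls each factor back, and sets $\theta_\iota(z)=\prod_{j=1}^N\Theta_{i_j}(1-\overline{\xi_j}z)$ and similarly $\phi_\iota,\psi_\iota$. Membership in $H^\infty_{0,E}(\D)$ then comes for free from the product: the $k$-th factor $\Theta_{i_k}(1-\overline{\xi_k}z)$ already vanishes to positive order as $z\to\xi_k$, so the full product vanishes at every $\xi_k\in E$, and no localization is needed. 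Property (iii) is likewise not a support argument but a change of variable: on the piece of $\partial E_r$ near $\xi_j$ one substitutes $\lambda=1-\overline{\xi_j}z$ and invokes the sectorial integral bound $\sup_i\int_{\partial\Sigma_\nu}|\Theta_i(\lambda)|\,\frac{|d\lambda|}{|\lambda|}<\infty$, while the remaining $N-1$ factors in the product are uniformly bounded there.
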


\begin{proof}
Let $H^\infty_{00}(\Sigma_{\frac{\pi}{2}})$ be the space 
of all $\Phi\in H^\infty(\Sigma_{\frac{\pi}{2}})$
for which there exist $c,s>0$ such that 
\begin{equation}\label{H00}
\vert\Phi(\lambda)\vert\leq c\vert \lambda\vert^s,\qquad\lambda\in
\Sigma_{\frac{\pi}{2}}.
\end{equation}
Then according to \cite[Proposition 6.3]{ArLM},
there exist three sequences
$(\Theta_i)_{i\geq 1}$, $(\Phi_i)_{i\geq 1}$ and $(\Psi_i)_{i\geq 1}$ of 
$H_{00}^\infty(\Sigma_{\frac{\pi}{2}})$ such that
\begin{equation}\label{SectCase1}
\sup\limits_{\lambda\in\Sigma_{\frac{\pi}{2}}} 
\sum\limits_{i = 1}^\infty\abs{\Phi_i(\lambda)} <\infty 
\qquad\hbox{and}\qquad
\sup\limits_{\lambda\in\Sigma_{\frac{\pi}{2}}} 
\sum\limits_{i = 1}^\infty\abs{\Psi_i(\lambda)} <\infty,
\end{equation}
\begin{equation}\label{SectCase2}
\sup\limits_{i \geq 1}\,
\sup\limits_{\lambda\in\Sigma_{\frac{\pi}{2}}} 
\abs{\Theta_i(\lambda)} <\infty,
\end{equation}
\begin{equation}\label{SectCase3}
\forall\, \nu\in\bigl(0,\tfrac{\pi}{2}\bigr), 
\quad \sup\limits_{i\geq 1} \int_{\partial \Sigma_\nu} 
\abs{\Theta_i(\lambda)}\,\frac{\vert d\lambda\vert}{\vert\lambda\vert}
\,<\infty,
\end{equation}
and
\begin{equation}\label{SectCase4}
\forall\,\lambda\in\Sigma_{\frac{\pi}{2}},
\quad 1 = \sum\limits_{i = 1}^\infty \Theta_i(\lambda)\Phi_i(\lambda)\Psi_i(\lambda).\qquad\qquad
\end{equation}
For any multi-index $\iota=(i_1,i_2,\ldots,i_N)\in\N^N$,
define $\theta_\iota,\phi_\iota, \psi_\iota
\colon\D\to\C$ by 
$$
\theta_\iota(z) = \Theta_{i_1}\bigl(1-\overline{\xi_1}z\bigr)
\Theta_{i_2}\bigl(1-\overline{\xi_2}z\bigr)
\cdots\Theta_{i_N}\bigl(1-\overline{\xi_N}z\bigr),
$$
and similarly
$$
\phi_\iota(z) = \prod\limits_{j=1}^N \Phi_{i_j}
\bigl(1-\overline{\xi_j}z\bigr)
\qquad \hbox{and}\qquad
\psi_\iota(z) = \prod\limits_{j=1}^N \Psi_{i_j}
\bigl(1-\overline{\xi_j}z\bigr).
$$
This is well-defined since for any $z\in\D$, 
we have $\overline{\xi_j}z\in\D$ hence
$1-\overline{\xi_j}z\in\Sigma_{\frac{\pi}{2}}$. Moreover
$$
\vert \theta_\iota(z)\vert
\leq c^N \prod\limits_{j=1}^N \vert 1-\overline{\xi_j}z\vert^s\,
=\, c^N \prod\limits_{j=1}^N \vert \xi_j- z\vert^s
$$
for any $z\in\D$, by (\ref{H00}). Hence all the functions $\theta_\iota$
belong to $H_{0,E}^\infty(\D)$. Likewise, 
all the functions $\phi_\iota$ and $\psi_\iota$ belong to $H_{0,E}^\infty(\D)$.
Since we can re-index the families 
$(\theta_\iota)_{\iota\in\N^N}$, $(\phi_\iota)_{\iota\in\N^N}$ and 
$(\psi_\iota)_{\iota\in\N^N}$ as sequences,
it suffices to show that they satisfy the properties (i)--(iv) of the statement.

For any $z\in\D,$
$$
\sum_{\iota=(i_1,\ldots,i_N)\in\N^N}
\vert 
\phi_\iota(z)\vert\, 
=\,\prod\limits_{j=1}^N\biggl(\sum_{i_j=1}^\infty\bigl\vert \Phi_{i_j}
\bigl(1-\overline{\xi_j}z\bigr)\bigr\vert \biggr)\,\leq\,
\biggl(\sup\limits_{\lambda\in\Sigma_{\frac{\pi}{2}}} 
\sum\limits_{i = 1}^\infty\abs{\Phi_i(\lambda)}\biggr)^N.
$$
Hence by (\ref{SectCase1}), the family 
$(\phi_\iota)_{\iota\in\N^N}$ satisfies (i). Likewise, 
the family $(\psi_\iota)_{\iota\in\N^N}$ satisfies (i), and
$(\theta_\iota)_{\iota\in\N^N}$ satisfies (ii), by (\ref{SectCase2}). 
Further (iv) holds true 
by (\ref{SectCase4}), since we have
$$
1=\prod\limits_{j=1}^N\Bigl(
\sum\limits_{i_j = 1}^\infty \Theta_{i_j}\bigl(1-\overline{\xi_j}z\bigr)
\Phi_{i_j}\bigl(1-\overline{\xi_j}z\bigr)\Psi_{i_j}\bigl(1-\overline{\xi_j}z\bigr)\Bigr)
=\,
\sum_{\iota=(i_1,\ldots,i_N)\in\N^N} 
\theta_\iota(z)\phi_\iota(z)\psi_\iota(z),
$$
for all $z\in \D$.

It therefore remains to check (iii). We fix $r\in(0,1)$. It 
follows from Definition \ref{ER} that there exist $C,\delta>0, 
\rho\in(0,1)$, as well as $2N$ angles $\nu_1,\nu_1',\ldots,\nu_N,\nu_{N}'$ in
$\bigl(0,\frac{\pi}{2}\bigr)$ so that the following holds true: 
\begin{itemize}
\item [(a)] For all $j=1,\ldots,N$, $\partial E_r\cap D(\xi_j,\delta)$ is the concatenation 
of the oriented segments $\bigl[\xi_j(1-\delta e^{i\nu_{j}}), \xi_j \bigr]$
and $\bigl[\xi_j, \xi_j(1-\delta e^{-i\nu_{j}'})\bigr]$;

\smallskip
\item [(b)] For all $z\in\partial E_r\setminus \cup_{j=1}^N D(\xi_j,\delta)$,
we have $\vert z\vert\leq\rho$.

\smallskip
\item [(c)] For all $1\leq j\not= k\leq N$, for all $z\in D(\xi_j,\delta)$, 
we have $\vert z-\xi_k\vert\geq C$.
\end{itemize}

Let $\iota=(i_1,\ldots,i_N)\in\N^N$.
Using (b), we have
$$
\int_{\partial E_{r}} \frac {\abs{\theta_{\iota}(z)}}
{\prod\limits_{k=1}^N \vert\xi_k -z\vert}\,\abs{dz}
\,\leq\frac{1}{(1-\rho)^N}\int_{\partial E_{r}} \abs{\theta_{\iota}(z)}
\abs{dz}\, + \,\sum_{j=1}^N
\int_{\partial E_{r}\cap D(\xi_j,\delta)} \frac {\abs{\theta_{\iota}(z)}}
{\prod\limits_{k=1}^N \vert\xi_k -z\vert} \,\abs{dz}.
$$
By (ii), the first term in the right hand-side is uniformly bounded. According 
to (a), it therefore suffices to show that for any $j=1,\ldots, N$, the integrals
\begin{equation}\label{ToBeUB}
\int_{[\xi_j(1-\delta e^{i\nu_{j}}), \xi_j]} \frac{\abs{\theta_{\iota}(z)}}
{\prod\limits_{k=1}^N \vert\xi_k -z\vert} \,\abs{dz}
\qquad\hbox{and}\qquad
\int_{[\xi_j, \xi_j(1-\delta e^{-i\nu_{j}'})]} \frac{\abs{\theta_{\iota}(z)}}
{\prod\limits_{k=1}^N \vert\xi_k -z\vert} \,\abs{dz}
\end{equation}
are uniformly bounded. 
Let $J_{\iota,j}$ be the first of these two integrals.
Let $K=\sup_{i}\norm{\Theta_i}_{\infty,\frac{\pi}{2}}$, given by 
(\ref{SectCase2}). Then by (c), we have
$$
J_{\iota,j} \leq\,\Bigl(\frac{K}{C}\Bigr)^{N-1}\,
\int_{[\xi_j(1-\delta e^{i\nu_{j}}), \xi_j]} 
\frac{\vert \Theta_{i_j} (1 - \overline{\xi_j}z)\vert }{\vert\xi_j -z\vert} 
\abs{dz}.
$$
Moreover the change of variable $\lambda= 1 - \overline{\xi_j}z$ leads to
$$
\int_{[\xi_j(1-\delta e^{i\nu_{j}}), \xi_j]} 
\frac{\vert \Theta_{i_j} (1 - \overline{\xi_j}z)\vert }{\vert\xi_j -z\vert} 
\,\abs{dz}\,\leq\,
\int_{\partial\Sigma_{\nu_j}} \vert\Theta_{i_j}(\lambda)\vert\,
\frac{\vert d\lambda\vert}{\vert \lambda\vert}.
$$
Property (\ref{SectCase3}) ensures that these integrals 
are uniformly bounded, hence the $J_{\iota,j}$ are uniformly bounded. 
Likewise, the second integrals in (\ref{ToBeUB}) are uniformly bounded, 
which concludes the proof.
\end{proof}

\smallskip
\subsection{From polynomial boundedness to bounded
polygonal functional calculus}\label{Reduction}

In the sequel we will use $R$-boundedness. We refer to \cite[Chapter 8]{HVVW}
for general information on this notion. We only recall basic notations and
the main definition. 
We let $(\epsilon_i)_{i\geq1}$ be a family of independent
Rademacher variables of some probability space $(\mathcal M,\mathbb{P})$. 
For any finite family $x_1,\ldots,x_n$ in $X$, we set
$$
\left\|\sum\limits_{i=1}^n \epsilon_i \otimes x_i\right\|_{Rad(X)} =
\biggl(\int_{\mathcal M} \Bignorm{\sum\limits_{i=1}^n \epsilon_i(t)x_i}_X^2
d\mathbb{P}(t)\biggr)^\frac12.
$$
Then we say that a subset $F\subset B(X)$ is $R$-bounded if there exists a constant $K 
\geq 0$ such that, for any $n\geq 1$, any $T_1,\ldots,T_n$ in $F$
and any $x_1,\ldots,x_n$ in $X$, 
$$
\left\|\sum\limits_{i=1}^n \epsilon_i \otimes T_i(x_i)\right\|_{Rad(X)}
\leq K\left\|\sum\limits_{i=1}^n \epsilon_i \otimes x_i\right\|_{Rad(X)}.
$$
In this case, we let ${\mathcal R}(F)$ denote the smallest possible $K 
\geq 0$ verifying this property.

\bigskip
In the sequel we fix again 
$E=\{\xi_1,\ldots,\xi_N\}\subset\T$ as in Section 2. 

\begin{definition} 
We say that an operator $T\in B(X)$ is $R$-Ritt$_E$ if 
$\sigma(T) \subset \overline{\D}$ and the set
$$
\biggl\{R(z,T) \prod\limits_{j=1}^N(\xi_j-z)\, :\, z\in\C,\ 1<\vert z\vert <2\biggr\}
$$
is $R$-bounded.
\end{definition}

\begin{remark}\label{RRE}
Let $T\in B(X)$ such that $\sigma(T)\subset \overline{\D}$.

\smallskip
(a)
For each $j=1,\ldots,N$, let ${\mathcal V}_j$ be an open neighborhood of $\xi_j$
such that $\overline{{\mathcal V}_j}\cap \overline{{\mathcal V}_k}=\emptyset$
if $j\not= k$.
If $\sigma(T)\cap\T\subset E$, then the set 
$$
W=\bigl\{z\in\C\, :\, 1\leq \vert z\vert \leq 2
\bigr\}\setminus \bigcup_{j=1}^N {\mathcal V}_j
$$
is  compact and $W\cap\sigma(T)=\emptyset$.
Hence 
$$
\biggl\{R(z,T) \prod\limits_{j=1}^N(\xi_j-z)\, :\, z\in
W\biggr\}
$$
is $R$-bounded, by \cite[Proposition 2.6]{W}. 
Thus $T$ is $R$-Ritt$_E$ if and only if for all $j=1,\dots,N$, the set 
$\{(\xi_j-z)R(z,T)\, :\, 1<\vert z\vert <2,\ z\in{\mathcal V}_j\}$ is $R$-bounded.
Furthermore by \cite[Proposition 2.8]{W}, the sets 
$\{(\xi_j-z)R(z,T)\, :\, \vert z\vert\geq 2\}$ are $R$-bounded.
(Here we use the disjointness of the $\overline{{\mathcal V}_j}$.)
Hence $T$ is $R$-Ritt$_E$ if and only if the sets
$$
\bigl\{(\xi_j-z)R(z,T)\, :\, z\in{\mathcal V}_j\cap\overline{\D}^c\bigr\},
\qquad j=1,\ldots,N,
$$
are $R$-bounded.

\smallskip (b) Arguing as in Lemma \ref{LEstolz}, one also obtains
that $T$ is $R$-Ritt$_E$ if and only
there exists an $E$-large enough 
$r\in (0,1)$ such that
$$
\sigma(T) \subset \overline{E_r}
$$
and 
for all $s\in(r,1)$, the set
$$
\biggl\{R(z,T) \prod\limits_{j=1}^N(\xi_j-z)\, :\, z\in D(0,2)\setminus \overline{E_s}\biggr\}
$$
is $R$-bounded.
\end{remark}

\begin{theorem}\label{thPolyHI}
Let $T \in B(X)$ be an $R$-Ritt$_E$ operator. If $T$ is polynomially bounded, then
$T$ admits a bounded $H^\infty(E_s)$ functional calculus for some $s\in (0,1)$.
\end{theorem}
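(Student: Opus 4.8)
The plan is to run the Kalton--Weis type argument that is used for Ritt operators (cf. \cite{LM1}), transplanted to the domain $E_s$ by means of the decomposition of unity of Proposition~\ref{DecPri}. Fix $r\in(0,1)$ so that $T$ is Ritt$_E$ of type $r$ (Lemma~\ref{LEstolz}) and so that the description of $R$-Ritt$_E$ operators in Remark~\ref{RRE}(b) applies, fix $s\in(r,1)$ and $u\in(r,s)$, and let $(\theta_i),(\phi_i),(\psi_i)$ be the sequences of $H^\infty_{0,E}(\D)\subset H_0^\infty(E_s)$ provided by Proposition~\ref{DecPri}. The key point is the grouping: for $\phi\in H_0^\infty(E_s)$, multiplicativity of the $H_0^\infty(E_s)$ calculus together with property (iv) of Proposition~\ref{DecPri} give, at least formally,
$$\phi(T)=\sum_{i\ge1}(\theta_i\phi_i\psi_i\phi)(T)=\sum_{i\ge1}\phi_i(T)\,(\theta_i\phi)(T)\,\psi_i(T),$$
so that, for $x\in X$ and $x^*\in X^*$,
$$\langle\phi(T)x,x^*\rangle=\sum_{i\ge1}\big\langle(\theta_i\phi)(T)\psi_i(T)x,\ \phi_i(T)^*x^*\big\rangle,$$
which I would estimate by the standard inequality $\bigl|\sum_i\langle M_iu_i,v_i^*\rangle\bigr|\le\mathcal R(\{M_i\})\,\bigl\|\sum_i\epsilon_i\otimes u_i\bigr\|_{Rad(X)}\bigl\|\sum_i\epsilon_i\otimes v_i^*\bigr\|_{Rad(X^*)}$ applied with $M_i=(\theta_i\phi)(T)$, $u_i=\psi_i(T)x$ and $v_i^*=\phi_i(T)^*x^*$.

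Two of the three factors are square function estimates coming solely from polynomial boundedness. Since $T$ is polynomially bounded it is power bounded, so its polynomial calculus extends, via Fej\'er means, to a bounded $A(\D)$ functional calculus, of norm $K$ say. For a fixed sequence of signs $(\epsilon_i(t))_i$, the function $\sum_i\epsilon_i(t)\psi_i$ belongs to $A(\D)$ (each $\psi_i$ extends continuously to $\overline{\D}$ and vanishes on $E$) and has $\|\cdot\|_{\infty,\D}\le\sup_z\sum_i|\psi_i(z)|=:C_\psi<\infty$ by Proposition~\ref{DecPri}(i); after checking that the $A(\D)$ and $H_0^\infty(E_s)$ calculi agree on $A(\D)\cap H_0^\infty(E_s)$ (a routine consistency point), this gives $\bigl\|\sum_i\epsilon_i(t)\psi_i(T)x\bigr\|\le KC_\psi\|x\|$ for every $t$, hence $\bigl\|\sum_i\epsilon_i\otimes\psi_i(T)x\bigr\|_{Rad(X)}\le KC_\psi\|x\|$; likewise $\bigl\|\sum_i\epsilon_i\otimes\phi_i(T)^*x^*\bigr\|_{Rad(X^*)}\le KC_\phi\|x^*\|$ with $C_\phi=\sup_z\sum_i|\phi_i(z)|$.

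The third factor is where the $R$-Ritt$_E$ hypothesis enters. Writing $N(\lambda)=\prod_{k=1}^N(\xi_k-\lambda)R(\lambda,T)$, we have
$$(\theta_i\phi)(T)=\frac1{2\pi i}\int_{\partial E_u}\frac{\theta_i(\lambda)\phi(\lambda)}{\prod_{k=1}^N(\xi_k-\lambda)}\,N(\lambda)\,d\lambda ,$$
the set $\{N(\lambda):\lambda\in\partial E_u\setminus E\}$ is $R$-bounded by Remark~\ref{RRE}(b) (using the elementary facts that $\overline{E_u}\cap\T=E$ and that $\partial E_u\setminus E$ lies in $E_s$ and in $D(0,2)\setminus\overline{E_{s'}}$ for a suitable $s'\in(r,u)$), and
$$\int_{\partial E_u}\frac{|\theta_i(\lambda)||\phi(\lambda)|}{\prod_{k=1}^N|\xi_k-\lambda|}\,|d\lambda|\ \le\ \|\phi\|_{\infty,E_s}\,\sup_i\int_{\partial E_u}\frac{|\theta_i(\lambda)|}{\prod_{k=1}^N|\xi_k-\lambda|}\,|d\lambda|\ <\ \infty$$
uniformly in $i$, by Proposition~\ref{DecPri}(iii) (with $r$ there equal to $u$). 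Since integral averages against an $R$-bounded family are $R$-bounded, with constant at most the $R$-bound of the family times the $L^1$-norm of the weight \cite[Chapter~8]{HVVW}, it follows that $\mathcal R(\{(\theta_i\phi)(T):i\ge1\})\lesssim\|\phi\|_{\infty,E_s}$. Combining the three estimates yields $|\langle\phi(T)x,x^*\rangle|\lesssim\|\phi\|_{\infty,E_s}\|x\|\|x^*\|$, i.e. $\|\phi(T)\|\lesssim\|\phi\|_{\infty,E_s}$, which is the asserted bounded $H^\infty(E_s)$ functional calculus.

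To make the series manipulations rigorous I would not sum the operators directly, but apply the above bounds to the partial sums $S_n=\sum_{i=1}^n\theta_i\phi_i\psi_i\phi\in H_0^\infty(E_s)$, with all constants uniform in $n$, and then let $n\to\infty$ using $S_n(T)\to\phi(T)$ in operator norm. This last convergence follows by dominated convergence in $\frac1{2\pi i}\int_{\partial E_u}S_n(\lambda)R(\lambda,T)\,d\lambda$: the remainder $(\phi-S_n)(\lambda)=\phi(\lambda)\sum_{i>n}\theta_i\phi_i\psi_i(\lambda)$ is dominated — using the polynomial decay $|\theta_i(\lambda)|\le C\prod_k|\xi_k-\lambda|^{s_0}$ of the functions constructed in the proof of Proposition~\ref{DecPri} together with $\sup_\lambda\sum_i|\phi_i\psi_i(\lambda)|<\infty$ — by $C'\|\phi\|_{\infty,E_s}\prod_k|\xi_k-\lambda|^{s_0}$, which is $\|R(\cdot,T)\|\,|d\lambda|$-integrable on $\partial E_u$ because $s_0>0$ absorbs the singularities of the resolvent at the $\xi_k$, while $\sum_{i>n}|\phi_i\psi_i(\lambda)|\to0$ pointwise. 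The main obstacle is conceptual rather than computational: one must hit upon the grouping $\phi_i(T)\,(\theta_i\phi)(T)\,\psi_i(T)$, in which polynomial boundedness produces exactly the two $Rad$-estimates while $R$-Ritt$_E$ and Proposition~\ref{DecPri}(iii) produce the $R$-bound on the middle family proportional to $\|\phi\|_{\infty,E_s}$; once this is seen, the remaining points (the disc-algebra extension, the consistency of the two calculi, and the Stolz-type geometry of $E_u$ versus $E_s$) are routine.
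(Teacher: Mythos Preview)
Your argument is correct and follows essentially the same route as the paper's own proof: both use the decomposition of unity from Proposition~\ref{DecPri}, write $\phi(T)$ as $\sum_i(\theta_i\phi)(T)\phi_i(T)\psi_i(T)$, obtain the $R$-bound on $\{(\theta_i\phi)(T)\}$ from the $R$-Ritt$_E$ hypothesis together with part~(iii) of Proposition~\ref{DecPri} via \cite[Theorem~8.5.2]{HVVW}, and then control the two Rademacher sums $\sum_i\epsilon_i\otimes\phi_i(T)x$ and $\sum_i\epsilon_i\otimes\psi_i(T)^*x^*$ using polynomial boundedness and part~(i); the only cosmetic differences are which of $\phi_i,\psi_i$ is sent to the dual side and that the paper extends the polynomial bound directly to $H_0^\infty(E_s)$ (via the argument of \cite[Proposition~2.5]{LM1}) rather than going through $A(\D)$ and Fej\'er means.
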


Combining this theorem with 
Proposition \ref{H-P}, we immediately deduce the following result.

\begin{corollary}\label{Main1}
Let $T \in B(X)$ be an $R$-Ritt$_E$ operator. 
If $T$ is polynomially bounded, then
$T$ admits a bounded polygonal functional calculus.
\end{corollary}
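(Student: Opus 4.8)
The plan is to obtain the statement as an immediate consequence of Theorem~\ref{thPolyHI} and Proposition~\ref{H-P}; there is essentially no content beyond these two ingredients, so the proof is a short chain of implications.

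First I would observe that an $R$-Ritt$_E$ operator is in particular a Ritt$_E$ operator: taking a single operator in the definition of $R$-boundedness shows that the set $\{R(z,T)\prod_{j=1}^N(\xi_j-z)\, :\, 1<\vert z\vert<2\}$ is norm-bounded, which is exactly the estimate (\ref{RE}). Hence, by Lemma~\ref{LEstolz}, $T$ is a Ritt$_E$ operator of type $r$ for some $r\in(0,1)$, so that the notions of $H^\infty(E_s)$ functional calculus and of polygonal functional calculus are available for $T$. Then, since $T$ is $R$-Ritt$_E$ and polynomially bounded, Theorem~\ref{thPolyHI} provides some $s\in(0,1)$ for which $T$ admits a bounded $H^\infty(E_s)$ functional calculus.

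At this point $T$ is simultaneously Ritt$_E$ for the finite set $E\subset\T$ and admits a bounded $H^\infty(E_s)$ functional calculus for that same $E$ and the above $s$, which is precisely condition (ii) of Proposition~\ref{H-P}. The implication (ii)$\Rightarrow$(i) of that proposition then yields a polygon $\Delta\subset\D$ with $\sigma(T)\subset\overline{\Delta}$ and a constant $K\geq 1$ such that $\norm{\phi(T)}\leq K\norm{\phi}_{\infty,\Delta}$ for all $\phi\in\P$; that is, $T$ admits a bounded polygonal functional calculus, as claimed. The only genuine obstacle lies upstream, in Theorem~\ref{thPolyHI} itself: one must upgrade polynomial boundedness of an $R$-Ritt$_E$ operator to a full $H^\infty(E_s)$ estimate, which relies on the decomposition of unity of Proposition~\ref{DecPri} and on $R$-boundedness to control the resulting operator-valued contour integrals over $\partial E_u$. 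Granting that theorem, the corollary requires no further argument.
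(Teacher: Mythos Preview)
Your proof is correct and follows exactly the paper's own argument: the corollary is stated there as an immediate consequence of Theorem~\ref{thPolyHI} combined with Proposition~\ref{H-P}. Your additional remark that $R$-Ritt$_E$ implies Ritt$_E$ (so that the $H^\infty(E_s)$ calculus is defined) is a harmless clarification of something the paper leaves implicit.
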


\begin{proof}[Proof of Theorem \ref{thPolyHI}] 
Let $r\in(0,1)$ such that $T$ satisfies Remark \ref{RRE}, (b), and fix some
$s \in (r,1)$. 
Consider the three sequences $(\theta_i)_{i\geq1}$, $(\phi_i)_{i\geq1}$ 
and $(\psi_i)_{i\geq1}$ 
provided by Proposition \ref{DecPri}.
Let $h\in H^\infty_0(E_s)$. Applying part (iv) of Proposition \ref{DecPri}, we have
$$
h(z) = \sum\limits_{i=1}^{\infty} h(z)\theta_i(z)\phi_i(z)\psi_i(z),
\qquad z\in\D.
$$
Fix some $u\in(r,s)$. According to parts (i) and (ii) of Proposition \ref{DecPri}, 
and the fact that $h\in H^\infty_0(E_s)$, we have
$$
\sum\limits_{i=1}^{\infty} \int_{\partial E_u}
\vert h(z)\theta_i(z)\phi_i(z)\psi_i(z)\vert \norm{R(z,T)}\,\vert dz\vert\,<\infty.
$$
Hence 
$$
h(T) =\,\sum\limits_{i=1}^{\infty} (h\theta_i\phi_i\psi_i)(T)
\,= \,\sum\limits_{i=1}^{\infty} h(T)\theta_i(T)\phi_i(T)\psi_i(T).
$$
For any $i\geq 1$, we may write
\begin{align*}
h(T)\theta_i(T) &= \frac 1 {2\pi i} \int_{\partial E_u} h(z) 
\theta_i(z) R(z,T) \, d{z}\\
&= \frac 1 {2\pi i} \int_{\partial E_u} \Biggl(\frac{h(z) 
\theta_i(z)}{\prod\limits_{j=1}^N(\xi_j-z)}\Biggr)\, 
\prod\limits_{j=1}^N (\xi_j-z) R(z,T) \, dz\,.
\end{align*}
Further we have an estimate
$$
\int_{\partial E_u}\Biggl\vert
\frac{ h(z) \theta_i(z)}{\prod\limits_{j=1}^N(\xi_j-z)}
\Biggr\vert\, \vert dz\vert\,\lesssim\norm{h}_{\infty, E_s},
$$
by part (iii) of Proposition \ref{DecPri}.
Applying \cite[Theorem 8.5.2]{HVVW}, we deduce that 
the set of all $h(T)\theta_i(T)$ is $R$-bounded, 
with
\begin{equation}\label{R}
\mathcal{R}\bigl(
\bigl\{h(T)\theta_i(T)\, :\, 
i\geq 1\bigr\}\bigr) \lesssim \|h\|_{\infty,E_s}.
\end{equation}
Fix $x \in X$ and $y \in X^{*}$. Set 
$h_n=\sum_{i=1}^n h\theta_i\phi_i\psi_i$ for any $n\geq 1$.
Then we have
\begin{align*} 
\langle{h_n(T)x,y}\rangle 
& =\sum\limits_{i=1}^n  \langle{h(T)\theta_i(T)\phi_i(T)x,\psi_i(T)^*y}\rangle\\
& \int_{\mathcal M} \Bigl\langle
\sum\limits_{i=1}^n\epsilon_i(t) h(T)\theta_i(T)\phi_i(T)x,
\sum\limits_{i=1}^n\epsilon_i(t)
\psi_i(T)^*y\Bigr\rangle
\, d\mathbb{P}(t).
\end{align*}
Applying the  
Cauchy-Schwarz inequality,
we deduce the inequality
$$
\abs{\langle{h_n(T)x,y}\rangle}\,\leq\,
\left\|\sum\limits_{i=1}^n \epsilon_i\otimes  h(T)\theta_i(T)\phi_i(T)x\right\|_{Rad(X)}
\left\|\sum\limits_{i=1}^n \epsilon_i\otimes \psi_i(T)^{*}y\right\|_{Rad(X^{*})}.
$$
Now applying (\ref{R}), this implies
$$
\abs{\langle{h_n(T)x,y}\rangle} \lesssim
\|h\|_{\infty,E_s} \left\|\sum\limits_{i=1}^n \epsilon_i\otimes \phi_i(T)x\right\|_{Rad(X)}
\left\|\sum\limits_{i=1}^n \epsilon_i\otimes \psi_i(T)^{*}y\right\|_{Rad(X^{*})}.
$$

By assumption, $T$ satisfies (\ref{PB}) for some
$K\geq 1$. Arguing as in \cite[Proposition 2.5]{LM1}, this implies that 
$$
\norm{\phi(T)}\leq K \norm{\phi}_{\infty,\D},\qquad \phi\in H^\infty_0(E_s).
$$
Hence applying part (i) of Proposition \ref{DecPri}, we have
an estimate
$$
\Bignorm{\sum_{i=1}^n \epsilon_i(t)\phi_i(T)}\lesssim 1,\qquad
t\in{\mathcal M},\, n\geq 1.
$$
This readily implies 
$$
\left\|\sum\limits_{i=1}^n \epsilon_i\otimes \phi_i (T)x\right\|_{Rad(X)} 
\lesssim \norm{x}.
$$
Similarly we have
$$
\left\|\sum\limits_{i=1}^n \epsilon_i\otimes\psi_i(T)^{*}y\right\|_{Rad(X^{*})} \lesssim \|y\|.
$$
Thus we have an estimate
$$
\vert \langle h_n(T)x,y\rangle\vert\lesssim
\|h\|_{\infty,E_s}\|x\|\|y\|.
$$
Since $h_n(T)\to h(T)$ when $n\to\infty$, we deduce the estimate
$$
\|h(T)\| \lesssim \|h\|_{\infty,E_s},
$$
which proves the bounded $H^\infty(E_s)$ functional calculus.
\end{proof}

Theorem \ref{thPolyHI} above was stated and proved for Ritt
operators in \cite[Proposition 7.7]{LM1}. The multi-point version
considered here required a different proof.

\begin{remark}\label{Hilbert}
Recall that if $X=H$ is a Hilbert space,  any bounded
subset of $B(H)$ is automatically $R$-bounded. Hence any
Ritt$_E$ operator $T\in B(H)$ is automatically $R$-Ritt$_E$.
It therefore follows from Corollary \ref{Main1} that
if $T\in B(H)$ is a polynomially bounded Ritt$_E$ operator,
then it admits a bounded polygonal functional calculus. 
This Hilbertian case, which was the motivation to undertake this work,
is due to de Laubenfels \cite[Theorem 4.4, $(a)\Rightarrow (b)$]{dL}.
It is also implicit in the Franks-McIntosh result  \cite[Theorem 5.5]{FMI}. 
Note that Proposition \ref{DecPri} relies on \cite[Proposition 6.3]{ArLM}, which
is itself a consequence of a construction devised in \cite{FMI}.
Thus in spirit, the proof of Corollary \ref{Main1} is closer  to \cite{FMI}
than to \cite{dL}.
\end{remark}

\begin{remark}\label{Delta}
Let $T$ be a Ritt$_E$ operator and recall the operators $A_j$ 
defined by (\ref{Aj}).
For $s\in(0,1)$ close enough to $1$, we have an inclusion  
$E_s\subset\Sigma(\xi_j,\arcsin(s))$ for 
all $j=1,\ldots,N$. It easily follows that if $T$ 
admits a bounded $\HI(E_s)$ functional calculus, then each
$A_j$
admits a bounded $\HI(\Sigma_{\arcsin(s)})$ functional calculus. 

Assume that $X$ has the triangular contraction property, in the sense 
of \cite[Section 7.5.b]{HVVW}. If $T$ admits a bounded $\HI(E_s)$ functional calculus
for some $s\in(0,1)$, it follows from above and from \cite[Theorem 5.3]{KW} that
the $A_j$ are $R$-sectorial of $R$-type $<\frac{\pi}{2}$. Using (\ref{Transfer3}) and 
applying Remark \ref{RRE}, we deduce that $T$ is $R$-Ritt$_E$.

Combining the above paragraph and Theorem \ref{thPolyHI}, we deduce that 
if  $X$ has the triangular contraction property, then  $T$ admits a bounded $\HI(E_s)$ functional calculus
for some $s\in(0,1)$ if and only if $T$ is both polynomially bounded and $R$-Ritt$_E$.
\end{remark}


\section{Ritt$_E$ contractively regular operators on $L^p$-spaces}
Our aim is to show that for any $1<p<\infty$, contractively regular operators on $L^p$ admit a bounded polygonal functional calculus if they 
are Ritt$_E$ for some $E$. This will be achieved in Subsection \ref{LP}.
In the previous Subsection \ref{Inter},
we establish a result of independent interest (valid on all Banach spaces)
connecting $H^\infty(E_s)$ functional calculus to the classical
$H^\infty$ functional calculus of sectorial operators.

\smallskip
\subsection{Intersection of sectorial functional calculi}\label{Inter}

In Subsection \ref{Aux} we recalled the definition of a sectorial operator
$A$ of type $\omega\in(0,\pi)$ in the case when $A$ is bounded (we will not
need unbounded sectorial operators in this paper). We will now use 
the notion of bounded $H^\infty(\Sigma_\theta)$ functional calculus for
any $A$ as above and $\theta\in(\omega,\pi)$. We refer to \cite[Section 2]{W1}, \cite[Chapter 5]{Haa} or 
\cite[Section 10.2.b]{HVVW} for the relevant definitions. These
three references provide a comprehensive information 
on the $H^\infty$ functional calculus of sectorial operators.

The following simple fact is given by \cite[Proposition 10.2.21]{HVVW}.

\begin{lemma}\label{BouInv}
Let $A\in B(X)$ and assume that 
$\sigma(A)\subset \Sigma_\omega$ for some $\omega\in(0,\pi)$.
Then $A$ is sectorial of type $\omega$ and for any $\theta\in(\omega,\pi)$, $A$ admits 
a bounded $H^\infty(\Sigma_\theta)$ functional calculus.
\end{lemma}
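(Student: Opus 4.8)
The plan is to reduce the statement of Lemma \ref{BouInv} to the cited result \cite[Proposition 10.2.21]{HVVW}, after checking that the hypothesis $\sigma(A)\subset\Sigma_\omega$ indeed places us in the situation covered by that proposition. First I would recall that for a \emph{bounded} operator $A$, the spectrum $\sigma(A)$ is a nonempty compact subset of $\C$. The hypothesis $\sigma(A)\subset\Sigma_\omega$ therefore says that $\sigma(A)$ is a compact subset of the \emph{open} sector $\Sigma_\omega$; in particular $0\notin\sigma(A)$, so $A$ is invertible, and $\sigma(A)$ stays at a strictly positive distance from the boundary rays $\partial\Sigma_\omega$ and from $0$.

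The next step is to verify sectoriality of type $\omega$ directly from this spectral picture, which is routine: for $\nu\in(\omega,\pi)$ the complement $\overline{\Sigma_\nu}^c$ has closure disjoint from the compact set $\sigma(A)$, so $\lambda\mapsto \lambda R(\lambda,A)$ is holomorphic on a neighbourhood of $\overline{\Sigma_\nu}^c$; since $\norm{\lambda R(\lambda,A)}=O(1)$ as $\vert\lambda\vert\to\infty$ and the function is bounded on the bounded part of $\overline{\Sigma_\nu}^c$ by compactness, we get a uniform bound $\norm{\lambda R(\lambda,A)}\leq K_\nu$. Hence $A$ is sectorial of type $\omega$, and this is really the same kind of compactness argument already used several times in the paper (e.g.\ in the proof of Lemma \ref{Sect}).

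For the $H^\infty$ functional calculus claim I would simply invoke \cite[Proposition 10.2.21]{HVVW}: for a bounded sectorial operator whose spectrum is contained in the open sector $\Sigma_\omega$, the Dunford--Riesz functional calculus extends to a bounded algebra homomorphism on $H^\infty(\Sigma_\theta)$ for every $\theta\in(\omega,\pi)$, because one may integrate $\phi(\lambda)R(\lambda,A)$ over a compact contour surrounding $\sigma(A)$ inside $\Sigma_\theta$, and this contour integral is controlled by $\sup$ of $\vert\phi\vert$ on $\Sigma_\theta$ times the (finite) length-and-resolvent factor coming from compactness of $\sigma(A)$. Since there is nothing here beyond quoting the reference and unwinding the definition, the statement follows.

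I do not expect a genuine obstacle: the whole content of the lemma is that a \emph{bounded} operator with spectrum in an open sector is a particularly tame sectorial operator, for which the $H^\infty$ calculus is immediate and bounded by the reference cited. The only mild point of care is to make explicit that "$\sigma(A)\subset\Sigma_\omega$" (open sector) is strictly stronger than "$\sigma(A)\subset\overline{\Sigma_\omega}$", so that $0$ and the boundary rays are avoided; once that is noted, everything is a compactness argument plus a citation.
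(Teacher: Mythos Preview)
Your proposal is correct and matches the paper's approach exactly: the paper gives no proof at all for this lemma, simply citing \cite[Proposition 10.2.21]{HVVW}, which is precisely the reference you invoke. Your additional unpacking of sectoriality via compactness and of the $H^\infty$ bound via a Dunford--Riesz contour integral is sound and merely makes explicit what the citation contains.
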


For the next statement, we note that if $A\in B(X)$ is sectorial of type $\omega\in(0,\pi)$,
then for 
all $\rho \in (0,1)$, we have
$$
\sigma((1-\rho)I_X + \rho A) \subset \Sigma_{\omega}.
$$
Hence for all $g \in \HI(\Sigma_\omega)$, the operator
$g\bigl((1-\rho)I_X + \rho A\bigr)$ is well-defined by
the Dunford-Riesz functional calculus.

\begin{lemma}\label{rho}
Let $A \in B(X)$ be a sectorial operator and assume
that $A$ admits a bounded $\HI(\Sigma_\theta)$ functional 
calculus for some $\theta\in(0,\pi)$. 
Then there exists a constant $K>0$ such that for all 
$g \in \HI(\Sigma_\theta)$ and for all $\rho \in (0,1)$,
\begin{equation}\label{A-rho}
\bignorm{g\bigl((1-\rho)I_X + \rho A\bigr)} \leq K \|g\|_{\infty,\Sigma_\theta}.
\end{equation}
\end{lemma}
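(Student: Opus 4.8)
The plan is to recognise $g\bigl((1-\rho)I_X+\rho A\bigr)$ as the value, at a single auxiliary function obtained from $g$ by an affine change of variable, of the bounded $H^\infty(\Sigma_\theta)$ functional calculus of $A$ \emph{itself}; the uniform bound in $\rho$ then comes for free from boundedness of that calculus. (Note that Lemma \ref{BouInv} applied to $A_\rho:=(1-\rho)I_X+\rho A$ gives, for each fixed $\rho$, a bounded $H^\infty$ calculus, but with no control of the constant as $\rho\to 0$ or $\rho\to 1$, so it does not suffice.) Concretely, fix $\omega\in(0,\theta)$ such that $A$ is sectorial of type $\omega$, and let $C>0$ be a constant with $\norm{h(A)}\le C\norm{h}_{\infty,\Sigma_\theta}$ for all $h\in H^\infty(\Sigma_\theta)$. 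For $\rho\in(0,1)$ put $f_\rho(z)=(1-\rho)+\rho z$, so that $A_\rho=f_\rho(A)$.

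The first step is the elementary geometric observation that $f_\rho\bigl(\overline{\Sigma_\omega}\bigr)\subseteq\Sigma_\omega$, and likewise $f_\rho(\Sigma_\theta)\subseteq\Sigma_\theta$: indeed $f_\rho(0)=1-\rho>0$, while for $w\ne 0$ in $\overline{\Sigma_\omega}$ the point $(1-\rho)\cdot 1+\rho\cdot w$ lies in the open convex cone $\{s+tw:s,t>0\}$, all of whose points have argument strictly between $0$ and $\arg w$, hence of modulus $<\omega$. Consequently $\sigma(A_\rho)=f_\rho(\sigma(A))$ is a compact subset of $\Sigma_\omega$, bounded away from $0$ and from $\partial\Sigma_\theta$; in particular $g(A_\rho)$ is well defined by the Dunford--Riesz functional calculus.

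Next I would carry out the change of variable. The set $f_\rho^{-1}(\Sigma_\theta)=\rho^{-1}\bigl(\Sigma_\theta-(1-\rho)\bigr)$ is open, contains $\Sigma_\theta$, and contains $\overline{\Sigma_\omega}$ — hence $\sigma(A)$ — by the first step; moreover $g\circ f_\rho$ is holomorphic on it with $\sup_{f_\rho^{-1}(\Sigma_\theta)}\abs{g\circ f_\rho}\le\norm{g}_{\infty,\Sigma_\theta}$. Choosing a Jordan contour $\Gamma$ surrounding $\sigma(A)$ inside $f_\rho^{-1}(\Sigma_\theta)$, the Dunford--Riesz calculus gives $\frac{1}{2\pi i}\int_\Gamma(g\circ f_\rho)(\mu)R(\mu,A)\,d\mu=(g\circ f_\rho)(A)$, which by compatibility of the $H^\infty(\Sigma_\theta)$ calculus of $A$ with the Dunford--Riesz calculus equals the value of the former at the restriction of $g\circ f_\rho$ to $\Sigma_\theta$ (an element of $H^\infty(\Sigma_\theta)$ of norm $\le\norm{g}_{\infty,\Sigma_\theta}$). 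On the other hand, substituting $\lambda=f_\rho(\mu)$ and using $R(f_\rho(\mu),A_\rho)=\rho^{-1}R(\mu,A)$ together with $d\lambda=\rho\,d\mu$ turns this integral into $\frac{1}{2\pi i}\int_{f_\rho(\Gamma)}g(\lambda)R(\lambda,A_\rho)\,d\lambda$, which is the Dunford--Riesz representation of $g(A_\rho)$, since $f_\rho$ is orientation preserving and $f_\rho(\Gamma)$ is a Jordan contour surrounding $\sigma(A_\rho)$ along which $g$ is holomorphic. Hence $g(A_\rho)=(g\circ f_\rho)(A)$, and therefore
\[
\bignorm{g\bigl((1-\rho)I_X+\rho A\bigr)}=\norm{(g\circ f_\rho)(A)}\le C\,\norm{g\circ f_\rho}_{\infty,\Sigma_\theta}\le C\,\norm{g}_{\infty,\Sigma_\theta},
\]
so the lemma holds with $K=C$. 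The one delicate point, which I regard as the crux, is precisely this identification $g(A_\rho)=(g\circ f_\rho)(A)$: it is a composition rule along the affine map $f_\rho$, and making it rigorous requires the compatibility of the $H^\infty(\Sigma_\theta)$ functional calculus with the Dunford--Riesz functional calculus on functions holomorphic on a neighbourhood of $\sigma(A)$. The geometric lemma $f_\rho(\Sigma_\theta)\subseteq\Sigma_\theta$ is what makes both functional calculi available along the substitution and simultaneously yields the estimate uniformly in $\rho$.
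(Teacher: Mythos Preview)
Your proposal is correct and follows essentially the same approach as the paper: both hinge on the observation that the affine map $\varphi_\rho(z)=(1-\rho)+\rho z$ sends $\Sigma_\theta$ into itself, so that $g\circ\varphi_\rho\in H^\infty(\Sigma_\theta)$ with $\norm{g\circ\varphi_\rho}_{\infty,\Sigma_\theta}\le\norm{g}_{\infty,\Sigma_\theta}$, and on the composition rule $g(A_\rho)=(g\circ\varphi_\rho)(A)$. The only difference is how that composition rule is justified: the paper first checks it for rational functions $\psi\in{\rm Rat}_\theta$ (where it is purely algebraic) and then passes to general $g\in H^\infty(\Sigma_\theta)$ by an approximation argument, whereas you derive it directly from the Dunford--Riesz integral via the change of variable $\lambda=f_\rho(\mu)$ together with compatibility of the two calculi; both routes are standard and yield the same constant $K=C$.
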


\begin{proof}
Let ${\rm Rat}_\theta\subset H^\infty(\Sigma_\theta)$ be the algebra
of all rational functions with nonpositive degree and poles off
$\overline{\Sigma_\theta}$. Since $A$ admits a bounded $\HI(\Sigma_\theta)$ functional calculus,
there exists a constant $K>0$ such that
$$
\norm{\phi(A)}\leq K\norm{\phi}_{\infty,\Sigma_\theta},\qquad \phi\in {\rm Rat}_\theta.
$$
Let $\rho \in (0,1)$. Set $\varphi_\rho(z) = (1-\rho) +\rho z$ and observe that 
$\varphi_\rho(\Sigma_\theta)\subset\Sigma_\theta$.
We set $A_\rho= \varphi_\rho(A)$. For any $\psi\in{\rm Rat}_\theta$, we have 
$\psi(A_\rho)=(\psi\circ\varphi_\rho)(A)$, hence
$$
\norm{\psi(A_\rho)}\leq K \norm{\psi\circ\varphi_\rho}_{\infty,\Sigma_\theta}\leq K 
\norm{\psi}_{\infty,\Sigma_\theta}.
$$
Thus (\ref{A-rho}) is satisfied by any element of ${\rm Rat}_\theta$.
By an entirely classical argument (see \cite{LM0} or \cite[Section 5.3.4]{Haa}) , we deduce (\ref{A-rho}) for all $g \in \HI(\Sigma_\theta)$.
\end{proof}

In the sequel, we fix
a finite set $E=\{\xi_1,\ldots,\xi_N\}\subset\T$ as in Section 2.

\begin{theorem}\label{THhinftyLp}
Let $T\in B(X)$ be a Ritt$_E$ operator. For any $j=1,\ldots,N$, let
$A_j = I_X - \overline{\xi_j}T$ and assume that
there exists $\theta_j \in \bigl(0,\frac \pi 2\bigr)$ 
such that $A_j$ admits a bounded $\HI(\Sigma_{\theta_j})$ functional calculus. Then,
\begin{itemize}
\item [(i)] There exists $s\in (0,1)$ such that $T$ 
admits a bounded $\HI(E_s)$ functional calculus;
\item [(ii)] $T$ admits a bounded polygonal functional
calculus.
\end{itemize}
\end{theorem}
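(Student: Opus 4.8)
The plan is to obtain (ii) from (i) via Proposition \ref{H-P}, and to prove (i) by a limiting argument. First I would fix the data: since $T$ is Ritt$_E$, choose $r\in(0,1)$ with $T$ of type $r$ (Lemma \ref{LEstolz}); then pick $s\in(0,1)$ with $s>r$ and $\arcsin s>\theta_j$ for every $j$ — possible since $r<1$ and $\theta_j<\tfrac\pi2$ — and fix $u\in(r,s)$. By Lemma \ref{Sect} and the hypothesis, each $A_j$ is sectorial of a type $\omega_j\in(0,\theta_j)$. The goal is $\norm{\phi(T)}\lesssim\norm{\phi}_{\infty,E_s}$ for all $\phi\in H_0^\infty(E_s)$.

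For $\rho\in(0,1)$ set $T_\rho=\rho T$ and $A_j^{(\rho)}=I_X-\overline{\xi_j}T_\rho=(1-\rho)I_X+\rho A_j$. Since $\sigma(T_\rho)=\rho\,\sigma(T)$ is a compact subset of $\D$ lying in $\rho\overline{E_r}\subset E_u$, the Dunford--Riesz calculus gives $\phi(T_\rho)=\tfrac1{2\pi i}\int_{\partial E_u}\phi(\lambda)R(\lambda,T_\rho)\,d\lambda$; using $R(\lambda,T_\rho)=\tfrac1\rho R(\lambda/\rho,T)$, the type-$r$ resolvent estimate of Lemma \ref{LEstolz}, and the decay of $\phi$ at the $\xi_j$, a routine dominated-convergence argument yields $\phi(T_\rho)\to\phi(T)$ in $B(X)$ as $\rho\to1$. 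So it suffices to bound $\norm{\phi(T_\rho)}$ by $\norm{\phi}_{\infty,E_s}$ uniformly in $\rho$. The gain is that the operators $A_j^{(\rho)}$ carry a \emph{uniformly} bounded sectorial calculus: their spectra lie in the open sector $\Sigma_{\theta_j}$ (so Lemma \ref{BouInv} applies), Lemma \ref{rho} bounds their $\HI(\Sigma_{\theta_j})$ calculus by a constant independent of $\rho$, and the identity $R(\mu,A_j^{(\rho)})=\tfrac1\rho R\bigl(\tfrac{\mu-(1-\rho)}\rho,A_j\bigr)$ shows they are sectorial of a fixed type in $(\omega_j,\theta_j)$ with constants independent of $\rho$.

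Next I would split $\partial E_u=\Gamma_0\cup\bigcup_{j=1}^N\Gamma_j$, with $\Gamma_j=\partial E_u\cap D(\xi_j,\delta)$ the pair of segments described (for $E_u$) in the proof of Proposition \ref{DecPri}, and $\Gamma_0$ bounded away from $E$. On $\Gamma_0$ the resolvents $R(\cdot,T_\rho)$ are uniformly bounded, contributing $O(\norm{\phi}_{\infty,E_s})$. On $\Gamma_j$, apply (\ref{Transfer3}): with $\mu=1-\overline{\xi_j}\lambda$ and $\psi_j(\mu)=\phi(\xi_j(1-\mu))$, the $\Gamma_j$-integral becomes $\tfrac1{2\pi i}\int_{\widetilde\Gamma_j}\psi_j(\mu)R(\mu,A_j^{(\rho)})\,d\mu$ over a truncated ``vee'' contour at $0$, which one deforms onto $\partial\Sigma_\nu$ for a fixed $\nu\in(\omega_j,\theta_j)$ (legitimate because $\sigma(A_j^{(\rho)})\subset\overline{\Sigma_{\omega_j}}$ and, since $\arcsin s>\theta_j$, $\psi_j$ is holomorphic on a sectorial neighbourhood of $0$ of half-angle $>\theta_j$). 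As $\theta_j<\tfrac\pi2$, the map $\mu\mapsto\tfrac{\delta\mu}{\delta+\mu}$ carries $\Sigma_{\theta_j}$ into $\Sigma_{\theta_j}\cap D(0,\delta)$, so $g_j:=\psi_j\circ\bigl(\mu\mapsto\tfrac{\delta\mu}{\delta+\mu}\bigr)$ belongs to $H_0^\infty(\Sigma_{\theta_j})$ with $\norm{g_j}_{\infty,\Sigma_{\theta_j}}\le\norm{\phi}_{\infty,E_s}$; hence $\norm{g_j(A_j^{(\rho)})}\lesssim\norm{\phi}_{\infty,E_s}$ uniformly in $\rho$ by Lemma \ref{rho}. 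Writing $g_j(A_j^{(\rho)})$ as a contour integral over a truncated sector boundary and subtracting, the discrepancy with $\tfrac1{2\pi i}\int_{\widetilde\Gamma_j}\psi_j R(\cdot,A_j^{(\rho)})$ splits into an integral of $g_j$ over the ``outer'' part of that contour — controlled by the uniform sectorial resolvent bound — and an integral near $0$ of $\psi_j-g_j$; since $\abs{\psi_j(\mu)-g_j(\mu)}\lesssim\abs\mu\,\norm{\phi}_{\infty,E_s}$ by an elementary Cauchy estimate (implied constant depending only on the fixed angles and on $\delta$), this last term is absolutely convergent and $O(\norm{\phi}_{\infty,E_s})$. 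Summing over $j$ gives $\norm{\phi(T_\rho)}\lesssim\norm{\phi}_{\infty,E_s}$ uniformly in $\rho$, and $\rho\to1$ yields (i).

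The crux is the local analysis near each $\xi_j$: transferring the corresponding piece of the Dunford--Riesz integral to the sectorial operator $A_j^{(\rho)}$, and then building from $\phi$ alone an honest $\HI(\Sigma_{\theta_j})$ function $g_j$ whose functional calculus reproduces that piece up to an error controlled by $\norm{\phi}_{\infty,E_s}$ — with every constant uniform in $\rho$ so the final limit survives. The passage between the truncated-sector contour for $\psi_j$ and the genuine sector calculus of $A_j^{(\rho)}$ (via the conformal substitution $\mu\mapsto\tfrac{\delta\mu}{\delta+\mu}$, where $\theta_j<\tfrac\pi2$ is used) together with the ensuing error bookkeeping is where the real work lies.
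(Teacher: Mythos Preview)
Your approach is correct in outline and genuinely different from the paper's. A small correction: your $g_j$ does \emph{not} lie in $H_0^\infty(\Sigma_{\theta_j})$, since $g_j(\mu)\to\psi_j(\delta)$ as $\mu\to\infty$; but $g_j\in H^\infty(\Sigma_{\theta_j})$ with $\norm{g_j}_\infty\le\norm{\phi}_{\infty,E_s}$ is all that Lemma~\ref{rho} requires, so the argument survives. The error bookkeeping you describe (deform the vee to angle $\nu$, write $g_j(A_j^{(\rho)})$ as a Dunford--Riesz integral over $\partial(\Sigma_\nu\cap D(0,R'))$, subtract, and control the outer piece via the uniform sectorial resolvent bound and the near-zero piece via $|\psi_j-g_j|\lesssim|\mu|\,\norm{\phi}_\infty$) does go through once one checks carefully that all constants are uniform in $\rho$; but it is delicate, and the limit $\phi(T_\rho)\to\phi(T)$ is not quite ``routine'' either --- one needs a uniform Ritt$_E$-type estimate for $T_\rho$.

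The paper avoids all of this by decomposing the \emph{function} rather than the contour. It first builds a polygon $\Delta\subset\D$ whose vertices on $\T$ are exactly $E$, with extra interior vertices $\zeta_i\in\D$ chosen so that $\sigma(T)\subset\Sigma(\zeta_i,\mu_i)$ at each of them. Then for a polynomial $\phi$ it sets
\[
\phi_i(z)=\frac{1}{2\pi i}\int_{\gamma_i}\frac{\phi(\lambda)}{\lambda-z}\,d\lambda,
\]
where $\gamma_i$ is the piece of $\partial\Delta$ through the vertex $\zeta_i$, so that $\phi=\sum_i\phi_i$ on $\Delta$. A short argument gives $\phi_i\in H^\infty(\Sigma(\zeta_i,\mu_i))$ with $\norm{\phi_i}_{\infty,\Sigma(\zeta_i,\mu_i)}\lesssim\norm{\phi}_{\infty,\Delta}$. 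Then $\phi_i(\rho T)$ is bounded directly by Lemma~\ref{rho} (if $\zeta_i\in E$) or Lemma~\ref{BouInv} (if $\zeta_i\notin E$), sum, and let $\rho\to1$ --- trivial here since $\phi$ is a polynomial. This proves (ii) first, with an explicit $\Delta$ whose $\T$-vertices are exactly $E$ (cf.\ Remark~\ref{Sharp}), and (i) follows since $\Delta\subset E_s$ for suitable $s$.

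The trade-off: your route stays on the fixed contour $\partial E_u$ and never needs the auxiliary polygon or its extra vertices, but pays for this with the M\"obius substitution and the comparison estimates near each $\xi_j$. The paper's Cauchy-integral splitting manufactures, for free, functions that live on full sectors, so the sectorial calculus applies with no comparison step at all.
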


\begin{proof}
We are going to build a (convex, open) polygon $\Delta\subset\D$ with the following three properties:
\begin{itemize}
\item [$(\bullet)$] There exists a finite set $E'\subset\D$ such that the set of vertices
of $\Delta$ is equal to $E\cup E'$.
\item [$(\bullet\bullet)$] $\sigma(T)\subset \overline{\Delta}$.
\item [$(\bullet\bullet\bullet)$] There exists a constant $K\geq 1$ such that 
$\norm{\phi(T)}\leq K\norm{\phi}_{\infty,\Delta}$ for all $\phi\in\P$.
\end{itemize}
This will obviously prove part (ii) of the statement. This will also 
prove part (i), since for any polygon $\Delta$ satisfying $(\bullet)$, there
exists $s\in (0,1)$ such that $\Delta\subset E_s$.

Recall $\Sigma(\xi,\omega)$ defined by (\ref{Sigma-xi}) for any $\xi\in\C^*$ and
any $\omega\in\bigl(0,\frac{\pi}{2}\bigr)$. We introduce
$$
\partial\Sigma(\xi,\omega)_+ = \bigl\{\xi(1-te^{-i\omega})\, :\, t>0\bigr\}
$$
and
$$
\partial\Sigma(\xi,\omega)_- = \bigl\{\xi(1-te^{i\omega})\, :\, t>0\bigr\},
$$
see Figure \ref{fig:polygonal sector}. 

\begin{figure}[!h]
    \includegraphics[scale=0.3]{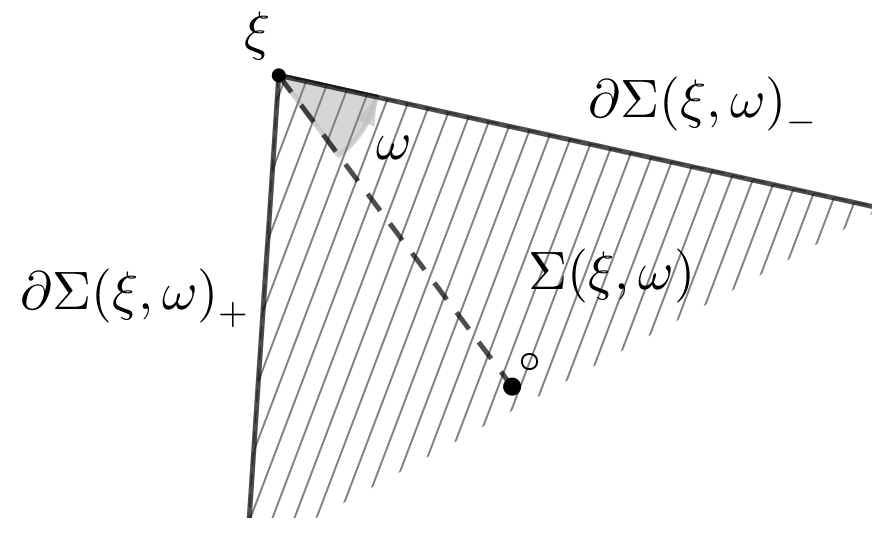}
    \caption{The set $\Sigma(\xi,\omega)$ and its boundary}
    \label{fig:polygonal sector}
\end{figure}

Assume that $N \geq 2$ (it is easy to adapt the proof to the case $N=1$).
For convenience we assume that the sequence $(\xi_1,\xi_2,\ldots,\xi_N)$
is oriented counterclockwise on $\T$ and we set $\xi_{N+1}=\xi_1$. 
We may choose $\theta\in\bigl(0,\frac{\pi}{2}\bigr)$ close enough to
$\frac{\pi}{2}$ so that : for all $j=1,\ldots,N$, the
operator $A_j$ have a bounded 
$\HI(\Sigma_{\theta})$ functional calculus; for all  $1\leq j\not= j'\leq N$,
$\xi_{j'} \in \Sigma(\xi_j,\theta)$; for all $j=1,\ldots,N$, the half-lines
$\partial \Sigma(\xi_j,\theta)_+$ and $\partial \Sigma(\xi_{j+1},\theta)_-$
do not meet in $\overline{\D}$.

Let us momentarily focus on the couple $(\xi_1,\xi_2)$, see Figure \ref{fig:polygon}. 
Let $\Gamma_{1,2}$ be the closed arc of $\T$ joining the points where 
$\Sigma(\xi_1,\theta)_+$ and $\Sigma(\xi_2,\theta)_-$ meet $\T$.
Then ${\rm dist}(\Gamma_{1,2},\sigma(T))>0$ hence by compactness,
we can find $r \in (0,1)$, $\theta' \in \bigl(0,\frac \pi 2\bigr)$ and 
points $z_1,...,z_p$ on $r\Gamma_{1,2}$, ordered counterclockwise, such that : 
\begin{itemize}
\item[-] For all $i=1,\ldots,p$, $\sigma(T) \subset \Sigma(z_i,\theta')$;
\item[-] The half-lines $\partial \Sigma(\xi_1,\theta)_+$ and $\partial \Sigma(z_1,\theta')_-$ meet in $\D\setminus\{0\}$;
\item[-] The half-lines $\partial \Sigma(z_p,\theta')_+$ and $\partial \Sigma(\xi_2,\theta)_-$ meet in $\D\setminus\{0\}$;
\item[-] For all $i=1,\ldots,p-1$, 
$\partial \Sigma(z_i,\theta')_+$ and $\partial \Sigma(z_{i+1},\theta')_-$ 
meet in $\D\setminus\{0\}$.
\end{itemize}

\begin{figure}[!h]
    \includegraphics[scale=0.26]{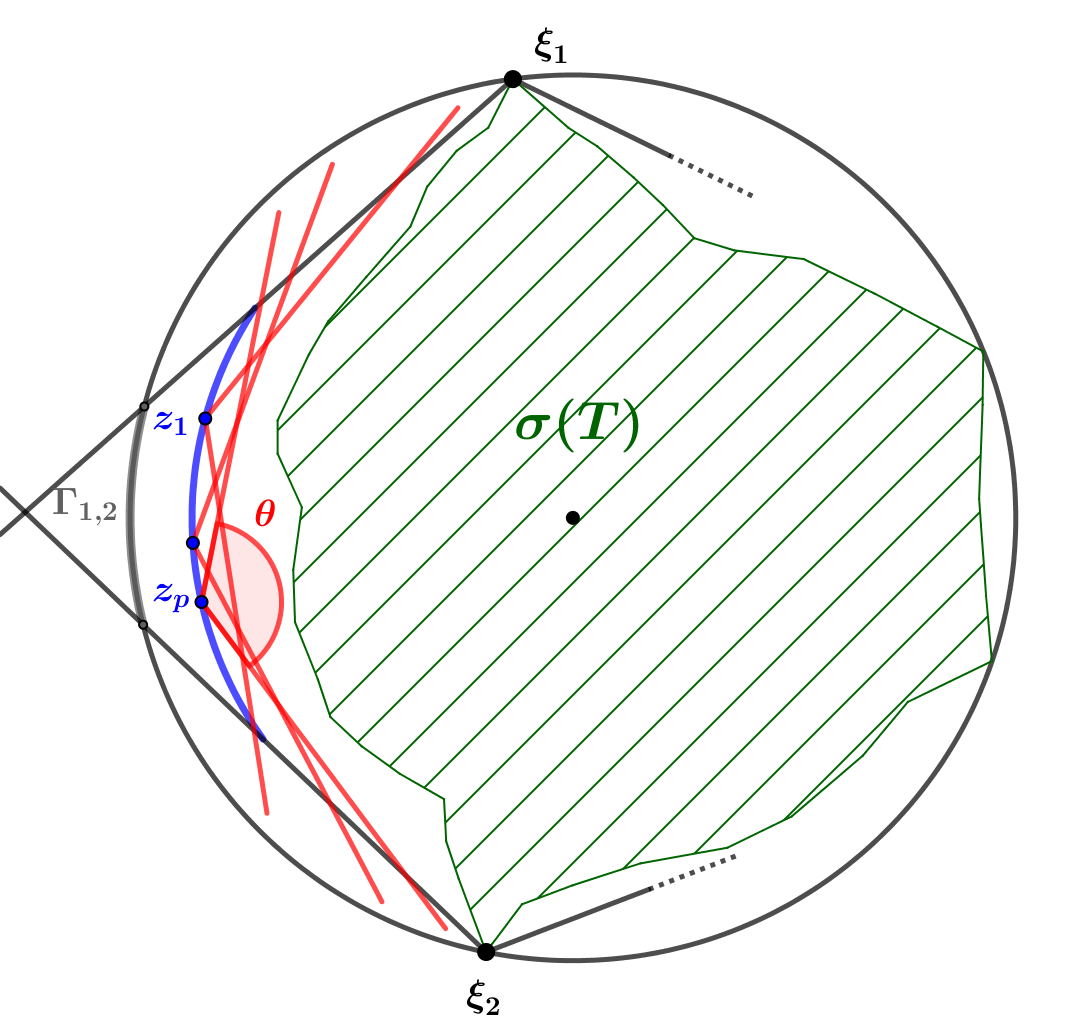}
    \caption{Construction of the polygon}
    \label{fig:polygon}
\end{figure}

Then we apply the same process to 
the couples $(\xi_2,\xi_3), ...,(\xi_N,\xi_1)$. Putting together
the points $\xi_j$ and the intermediate points 
$z_i$, we therefore obtain a finite
sequence $(\zeta_1,\zeta_2,\ldots,\zeta_m)$ of distinct elements of $\overline{\D}\setminus\{0\}$,
ordered counterclockwise,
as well as angles $\mu_1,...,\mu_m$ in $\bigl(0,\frac \pi 2\bigr)$, verifying the following properties:
\begin{itemize}
\item[(a)] We have 
$$
\bigl\{\zeta_1,\zeta_2,\ldots,\zeta_m\bigr\}\cap\T = E;
$$
\item[(b)] For any $i=1,\ldots,m$ : 
\begin{itemize}
\item[(b1)] If there exists $j \in \{1,...,N\}$ such that $\zeta_i = \xi_j$, then $\mu_i = \theta$;
\item[(b2)] If $\zeta_i \notin E$, then $\sigma(T) \subset \Sigma(\zeta_i,\mu_i)$
\end{itemize}
\item[(c)] Setting $\zeta_{m+1} =\zeta_1$, the half-lines  
$\partial\Sigma(\zeta_i,\mu_i)_+$ and $\partial\Sigma(\zeta_{i+1},\mu_{i+1})_-$ meet exactly at one point 
$c_i \in \D\setminus\{0\}$, for all $i=1,\ldots,m$.
\end{itemize}
Finally we set
$$
d_i = \frac 1 2 \left(c_i +  c_i\abs{c_i}^{-1}\right),
\qquad i=1,\ldots,m.
$$
We let $\Delta_0$ be the open polygon with vertices 
$\{\zeta_1,c_1,\zeta_2,c_2,...,\zeta_m,c_m\}$. We may assume that
it is convex.
Likewise, we let
$\Delta$ be the open polygon with vertices 
$\{\zeta_1,d_1,\zeta_2,d_2,...,\zeta_m,d_m\}$.
By construction we have
$$
\Delta_0 = \bigcap\limits_{i = 1}^m\Sigma(\zeta_i,\mu_i)
$$
and
$$
\Delta_0 \subset\Delta.
$$
According to (c), all the $d_i$ belong to $\D$. Hence it
follows from (a) that
the polygon $\Delta$ satisfies $(\bullet)$. 
It further satisfies $(\bullet\bullet)$, by (b).

Let us now show that the polygon $\Delta$ satisfies $(\bullet\bullet\bullet)$.
We set $d_0=d_{m}$ for convenience.
For all $i=1,\ldots,m$, we let $\gamma_i$ be the path obtained as the concatenation
of the oriented segments $[d_{i-1},\zeta_i]$ and $[\zeta_i,d_i]$.
Next we fix  $\epsilon >0$ small enough so 
that the sets $V_i = \Delta_0 \cap D(\zeta_i,\epsilon)$ 
are pairwise disjoint, see Figure \ref{fig:raised polygon}.

\begin{figure}[!h]
    \includegraphics[scale=0.3]{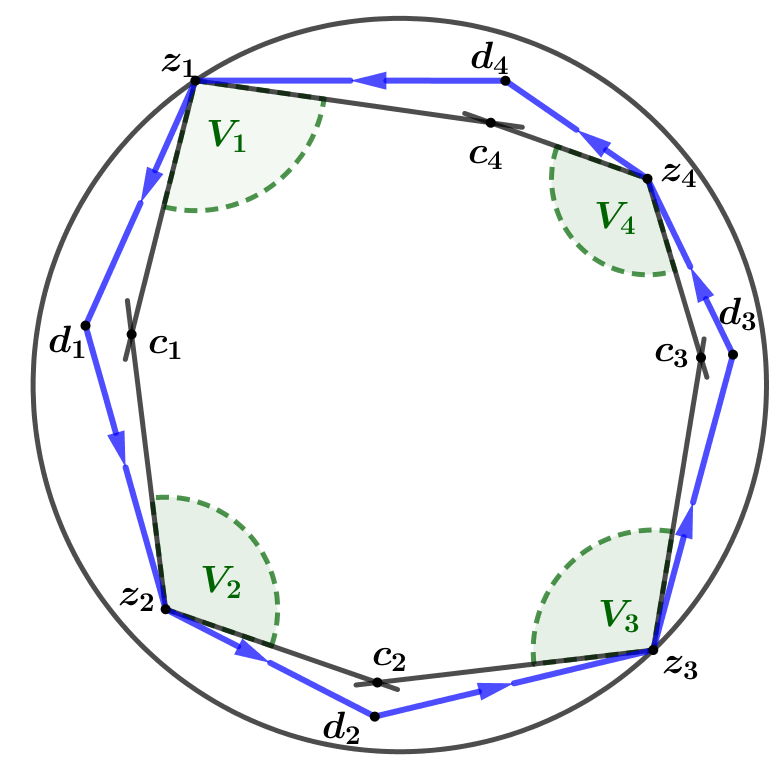}
    \caption{Polygons $\Delta_0$
    and $\Delta$ with $E=\{z_1,z_3\}$ and additional points $z_2, z_4$}
    \label{fig:raised polygon}
\end{figure}

Let $\phi$ be a polynomial.
For all $i=1,\ldots,m$, we define 
$\phi_i : \C\setminus \gamma_i \rightarrow \C$ by setting
$$
\phi_i(z) = \displaystyle{\frac 1 {2\pi i}
\int_{\gamma_i}\frac {\phi(\lambda)}{\lambda - z}
\,
d\lambda}.
$$
These functions are holomorphic. Moreover by Cauchy's theorem, we have 
\begin{equation}\label{EQcauchySum}
\forall\, z\in\Delta,\qquad \phi(z) = \sum\limits_{i=1}^m \phi_i(z).
\end{equation}

Since the distance from $\gamma_i$ to 
$\Sigma(\zeta_i,\mu_i)\setminus V_i$ is positive, 
we have an estimate 
\begin{equation}\label{First}
\abs{\phi_i(z)} \lesssim \|\phi\|_{\infty,\Delta},\qquad 
z \in \Sigma(\zeta_i,\mu_i)\setminus V_i,
\end{equation}
for all $i=1,\ldots,m$.
Next, observe that since the set $V_1$ is disjoint from each $V_i$, with $i\geq 2$,  we also
have estimates
$$
\abs{\phi_i(z)} \lesssim \|\phi\|_{\infty,\Delta},\qquad z\in V_1,\, 
i\geq 2.
$$
We have $V_1 \subset \Delta$ hence by (\ref{EQcauchySum}), 
$\phi_1(z) = \phi(z) - \sum\limits_{i=2}^m\phi_i(z)$ 
for all $z\in V_1$.
We deduce an estimate
$$
\abs{\phi_1(z)} \lesssim \|\phi\|_{\infty,\Delta},\qquad z\in V_1.
$$  
Combining with (\ref{First}) for $i=1$, we obtain that 
$\phi_1$ belongs to
$\HI(\Sigma(\zeta_1,\mu_1))$ and satisfies an estimate
$$
\|\phi_1\|_{\infty,\Sigma(\zeta_1,\mu_1)} \lesssim \|\phi\|_{\infty,\Delta}.
$$
A similar argument shows that for all $i=1,\ldots,m$,
\begin{equation}\label{Alli}
\phi_i \in \HI(\Sigma(\zeta_i,\mu_i))
\qquad\hbox{and}\qquad
\|\phi_i\|_{\infty,\Sigma(\zeta_i,\mu_i)} \lesssim \|\phi\|_{\infty,\Delta}.
\end{equation}
            
Now let $\rho \in (0,1)$. Since $(\bullet\bullet)$ holds true,
we have $\sigma(\rho T) \subset \Delta$.
We can therefore define operators
$\phi_i(\rho T)$ by the Dunford-Riesz functional calculus and we have 
\begin{equation}\label{Sum}
\phi(\rho T) = \sum\limits_{i = 1}^m \phi_i(\rho T),
\end{equation}
by (\ref{EQcauchySum}). (Here we use $\rho T$ instead of $T$ because
the $\phi_i(T)$ are a priori not defined.)

We shall now apply (b).
Let $i \in \{1,...,m\}$ and assume first 
that there exists 
$j \in \{1,...,N\}$ such that $\zeta_i = \xi_j$. 
Let $g\colon \Sigma_{\theta} \rightarrow \C$ be
defined by $g(z) = \phi_i \left(\xi_j(1-z)\right)$.
This function is well defined, holomorphic and bounded, 
by (\ref{Alli}). We further have
$$
\|g\|_{\infty,\Sigma_{\theta}} = \|\phi_i\|_{\infty,\Sigma(\xi_j,\theta)} = \|\phi_i\|_{\infty,\Sigma(\zeta_i,\mu_i)}.
$$
Since $A_j = I_X -\overline{\xi_j}T$, we have 
$$
g\left((1-\rho)I_X+\rho A_j\right) = \phi_i(\rho T).
$$
Applying Lemma \ref{rho}, we obtain an estimate
$$
\|\phi_i(\rho T)\| \lesssim \|\phi_i\|_{\infty,\Sigma(z_i,\mu_i)}.
$$
Appealing to (\ref{Alli}), we deduce an estimate
\begin{equation}\label{phi-i}
\|\phi_i(\rho T)\| \lesssim \|\phi\|_{\infty,\Delta}.
\end{equation}
Otherwise, $\zeta_i \notin E$ hence 
$\sigma(T) \subset \Sigma(z_i,\mu_i)$. Arguing as above and
using Lemma \ref{BouInv} instead of Lemma \ref{rho}, we obtain 
an estimate (\ref{phi-i}) as well.

We have $\|\phi(\rho T)\| \leq \sum\limits_{i=1}^m \|\phi_i(\rho T)\| $, 
by (\ref{Sum}), hence the estimates (\ref{phi-i}), for $i=1,\ldots,m$,
yield
$$
\|\phi(\rho T)\| \lesssim \|\phi\|_{\infty,\Delta}
$$
Since this estimate does not depend on
$\rho$ and $\phi(\rho T)\to\phi(T)$ when $\rho\to 1$, we obtain
property $(\bullet\bullet\bullet)$.
\end{proof}

\begin{remark}\label{Easy}
It follows from the first paragraph of
Remark \ref{Delta} and 
 Theorem
\ref{THhinftyLp} that 
 given a Ritt$_E$ operator
$T\in B(X)$, there exists $s\in (0,1)$ such that $T$ 
admits a bounded $\HI(E_s)$ functional calculus if and only if 
for each $j=1,\ldots,N$, 
there exists $\theta_j \in \bigl(0,\frac \pi 2\bigr)$ 
such that $A_j$ admits a bounded $\HI(\Sigma_{\theta_j})$ functional calculus. 
\end{remark}

\begin{remark}\label{Sharp}
We give here a complement to Proposition \ref{H-P}.
Let $T\in B(X)$ be a Ritt$_E$ operator. It follows from the previous remark 
and the proof of Theorem \ref{THhinftyLp} that if $T$ 
admits a bounded $\HI(E_s)$ functional calculus for some $s\in(0,1)$, then there
exists a polygon $\Delta\subset\D$ such that $T$ admits a bounded functional
calculus with respect to $\Delta$ and the set of vertices of $\Delta$ belonging to 
$\T$ coincides with $E$. This new condition on vertices is sharp.
\end{remark}

\begin{remark}\label{Wrong}
As a complement to Corollary \ref{Main1} and Remark \ref{Hilbert},
we mention that there 
exist  a Banach space $X$ and a Ritt$_E$ operator $T\in B(X)$ such that
$T$ is polynomially bounded but $T$ does not admit any bounded polygonal
functional calculus. To check this, assume (as we may do)
that $1\in E$. Recall the Stolz domains 
$$
B_\omega=\overset{\circ}{\rm Conv}\bigl(1, D(0,\sin(\omega))\bigr),
$$
for $\omega\in\bigl(0,\frac{\pi}{2}\bigr)$.
According to 
\cite[Theorem 3.2]{LLM}, there exists a Ritt operator $T$ on some $X$
which is polynomially bounded although it does not admit any bounded
$H^\infty(B_\omega)$ functional calculus. The operator $T$
is Ritt$_E$. Assume that $T$ admits a bounded functional
calculus with respect to some polygon $\Delta\subset\D$. Then 
$1\in\Delta$ and the argument in Remark \ref{Easy} implies
that $I_X-T$ admits a bounded $H^\infty(\Sigma_\theta)$ functional
calculus for some $\theta\in\bigl(0,\frac{\pi}{2}\bigr)$. 
This implies, by \cite[Proposition 4.1]{LM1}, that $T$ admits a  
bounded
$H^\infty(B_\omega)$ functional calculus for some 
$\omega\in\bigl(0,\frac{\pi}{2}\bigr)$, whence a contradiction.

\end{remark}

\smallskip
\subsection{Ritt$_E$ contractively regular 
operators on $L^p$ spaces}\label{LP}
We consider a measure space $(S,\mu)$, we let 
$1<p<\infty$ and we consider operators acting on 
$L^p(S)$. 

We recall that a bounded operator $T\colon L^p(S)\to L^p(S)$ is called 
regular if there exists a constant $C\geq 0$ such that for all
$n\geq 1$ and all $x_1,\ldots,x_n$ in $L^p(S)$, we have 
$$
\Bignorm{\sup\limits_{1\leq i \leq n}\abs{T(x_i)}}_p 
\leq C \Bignorm{\sup\limits_{1 \leq i \leq n}{\abs{x_i}}}_p.
$$
In this case, the smallest $C\geq 0$ verifying this
property is called the regular norm of $T$ and is denoted
by $\norm{T}_{r}$. 
We say that $T$ is contractively regular if $\norm{T}_{r}\leq 1$.

If $T\colon L^p(S)\to L^p(S)$ is positive, then $T$ is regular and $\norm{T}_{r}
=\norm {T}$. Thus positive contractions are contractively regular.

It is plain that the set  
$B_{reg}(L^p(S))$ of all regular operators equipped
with $\norm{\,\cdotp}_{r}$ is a Banach algebra.

We refer e.g. to \cite[Section 1]{Pis2} for information on regular operators.

\begin{theorem}\label{MainLP}
Let $T\colon L^p(S)\to L^p(S)$ be a Ritt$_E$ contractively regular operator, 
with $1<p<\infty$. Then,
\begin{itemize}
\item [(i)] There exists $s \in (0,1)$ such that $T$ 
admits a bounded $\HI(E_s)$ functional calculus;
\item [(ii)] $T$ admits a bounded polygonal functional
calculus.
\end{itemize}
\end{theorem}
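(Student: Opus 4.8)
The plan is to derive Theorem \ref{MainLP} from Theorem \ref{THhinftyLp}. More precisely, it suffices to show that for every $j=1,\ldots,N$ the (automatically sectorial) operator $A_j=I_X-\overline{\xi_j}T$ admits a bounded $\HI(\Sigma_{\theta_j})$ functional calculus for some $\theta_j\in\bigl(0,\frac{\pi}{2}\bigr)$; once this is granted, assertions (i) and (ii) are literally the two conclusions of Theorem \ref{THhinftyLp}. Observe also that polynomial boundedness of $T$ is not needed as an input: it will be a byproduct of the bounded polygonal functional calculus.

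So I would fix $j$ and set $V=\overline{\xi_j}T$. First, $V$ is again contractively regular: since $\overline{\xi_j}$ is a unimodular scalar we have $\abs{Vx}=\abs{Tx}$ pointwise, hence $\norm{V}_r=\norm{T}_r\leq 1$. Second, because $T$ is Ritt$_E$, Lemma \ref{Sect} gives that $A_j=I_X-V$ is sectorial of some type $\omega<\frac{\pi}{2}$, so that $-A_j$ generates a bounded analytic semigroup $(e^{-tA_j})_{t\geq 0}$ on $L^p(S)$. The crucial observation is that this semigroup is itself contractively regular: writing $e^{-tA_j}=e^{-t}\sum_{k\geq 0}\frac{t^k}{k!}V^k$ and using that $B_{reg}(L^p(S))$ is a Banach algebra together with $\norm{V^k}_r\leq\norm{V}_r^k\leq 1$, the series converges in $B_{reg}(L^p(S))$ and $\norm{e^{-tA_j}}_r\leq e^{-t}\sum_{k\geq 0}\frac{t^k}{k!}=1$ for all $t\geq 0$.

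At this stage $A_j$ is the negative generator of a bounded analytic, contractively regular semigroup on $L^p(S)$, $1<p<\infty$, and the last step is to conclude that such a generator has a bounded $\HI(\Sigma_{\theta_j})$ functional calculus with $\theta_j<\frac{\pi}{2}$. I would obtain this from a dilation theorem for contractively regular contraction semigroups on $L^p$-spaces (Fendler), which dilates $(e^{-tA_j})_{t\geq 0}$ to a $c_0$-group of invertible isometries on a larger $L^p$-space, together with a Coifman--Weiss type transference argument that carries the analyticity of $(e^{-tA_j})_{t\geq 0}$ over to a bounded functional calculus of its generator; equivalently, one may invoke the known fact that the negative generator of a bounded analytic semigroup of positive contractions --- more generally, of contractively regular operators --- on $L^p$, $1<p<\infty$, admits a bounded $\HI(\Sigma_\theta)$ functional calculus for some $\theta<\frac{\pi}{2}$. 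Feeding this into Theorem \ref{THhinftyLp} completes the argument.

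I expect the last step to be the only genuine obstacle. The reduction performed in the first two paragraphs is routine given Lemma \ref{Sect} and Theorem \ref{THhinftyLp}, and the chain $T\rightsquigarrow V\rightsquigarrow A_j\rightsquigarrow(e^{-tA_j})$ is elementary. By contrast, the implication ``contractively regular bounded analytic semigroup on $L^p$ $\Rightarrow$ generator with bounded $\HI$ functional calculus of angle $<\frac{\pi}{2}$'' is a genuinely $L^p$-phenomenon, relying on dilation and transference techniques special to $L^p$-spaces; this is exactly where the hypothesis that $T$ is a positive contraction, or contractively regular --- rather than merely power bounded --- is indispensable, and it explains why one cannot simply go through Corollary \ref{Main1}, a contractively regular Ritt$_E$ operator on $L^p$ being in general not $R$-Ritt$_E$.
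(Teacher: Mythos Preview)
Your proposal is correct and follows essentially the same route as the paper: reduce to Theorem \ref{THhinftyLp} by showing that each $A_j=I_X-\overline{\xi_j}T$ admits a bounded $\HI(\Sigma_{\theta_j})$ functional calculus for some $\theta_j<\frac{\pi}{2}$, which is obtained by checking that the semigroup $(e^{-tA_j})_{t\geq 0}$ is contractively regular and then invoking the $L^p$ result on $\HI$ calculus for such semigroups. The paper packages your ``last step'' as a direct citation of \cite[Proposition 2.2]{LMX}, whose proof indeed goes through the Fendler dilation and transference ingredients you outline.
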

        
\begin{proof} 
Consider $A_j = I_X-\overline{\xi_j} T$ for all $j=1,\ldots,N$ and define
$T_{j,t} := e^{-tA_j}$ for all $t\geq 0$. These operators are
all contractively regular. Indeed,
$$
\|T_{j,t}\|_{r} 
= e^{-t}\|e^{t\overline{\xi_j} T}\|_r
\leq e^{-t} \sum\limits_{k=0}^\infty 
\frac{t^k \|\overline{\xi_j}T\|_{r}^k}{k!}
\leq e^{-t} e^{t\norm{\overline{\xi_j} T}_{r}}\leq 1.
$$ 
Then, according to \cite[proposition 2.2]{LMX}, 
there exists, for all $j=1,\ldots,N$, some 
$\theta_j\in\bigl(0,\frac \pi 2\bigr)$ such that $A_j$ admits a $\HI(\Sigma_{\theta_j})$ bounded functional calculus. 
We conclude by applying Theorem \ref{THhinftyLp}.
\end{proof}

\bigskip
\noindent
{\bf Acknowlegement.} The authors were supported by the ANR project {\it Noncommutative analysis on groups and quantum groups}
(No./ANR-19-CE40-0002). Further, the LmB receives support from
the EIPHI Graduate School (contract ANR-17-EURE-0002)

\vskip 1cm
        
\bibliographystyle{abbrv}	

\vskip 0.8cm

\end{document}